\def\O{\mathbf{O}}
\DeclareMathOperator{\aut}{Aut}
\DeclareMathOperator{\cay}{Cay}
\DeclareMathOperator{\cyc}{Cyc}
\DeclareMathOperator{\dev}{Dev}
\DeclareMathOperator{\GaL}{{\rm \Gamma}L}
\DeclareMathOperator{\PGaL}{{\rm P\Gamma}L}
\DeclareMathOperator{\GL}{GL}
\DeclareMathOperator{\iso}{Iso}
\DeclareMathOperator{\orb}{Orb}
\DeclareMathOperator{\PSL}{PSL}
\DeclareMathOperator{\PGL}{PGL}
\DeclareMathOperator{\rk}{rk}
\DeclareMathOperator{\SL}{SL}
\DeclareMathOperator{\Span}{Span}
\DeclareMathOperator{\sym}{Sym}
\DeclareMathOperator{\Hol}{Hol}
\def\r{\mathrm{right}}
\def\tm#1{\item[{\rm (#1)}]}
\def\@seccntformat#1{\csname the#1\endcsname. } 
\def\@biblabel#1{#1.}
\title[DSRGs and DDGs from Tatra association schemes]{Directed strongly regular graphs and divisible design graphs from Tatra association schemes}
\author{Mikhail Muzychuk}
\address{Ben Gurion University of the Negev, Beer Sheva, Israel}
\email{muzychuk@bgu.ac.il}
\author{Grigory Ryabov}
\address{Sobolev Institute of Mathematics, Novosibirsk, Russia}
\email{gric2ryabov@gmail.com}
\thanks{}
\date{}
\newtheorem{prop}{Proposition}[section]
\newtheorem{lemm}[prop]{Lemma}
\newtheorem{theo}[prop]{Theorem}
\newtheorem*{ques}{Question}
\newtheorem{corl}[prop]{Corollary}
\theoremstyle{definition}
\newtheorem*{rem1}{Remark~1}
\newtheorem*{rem2}{Remark~2}
\newtheorem*{rem3}{Remark~3}
\begin{document}

\begin{abstract}
In this paper, we construct directed strongly regular graphs and divisible design graphs with new parameters merging some basic relations of so-called Tatra associations schemes. We also study the above association schemes, their fusions and isomorphisms.
\\
\\
\textbf{Keywords}: association schemes, directed strongly regular graphs, divisible design graphs.
\\
\\
\textbf{MSC}: 05C25, 05C60, 20B25.
\end{abstract}

\maketitle

\section{Introduction}
 
A \emph{strongly regular graph} (\emph{SRG} for short) with parameters $(v,k,\lambda,\mu)$ is defined to be a $k$-regular graph on $v$ vertices such that the number of common neighbors of any two distinct vertices $\alpha$ and $\beta$ is equal to $\lambda$ or $\mu$ when $(\alpha,\beta)$ is an edge or $(\alpha,\beta)$ is not an edge, respectively. Nowadays, SRGs are realized as one of the crucial objects in algebraic combinatorics. For a background of SRGs, we refer the readers to the monograph~\cite{BM}. 

The following generalization of the notion of an SRG was introduced in~\cite{Duv}. A \emph{directed strongly regular graph} (\emph{DSRG} for short) with parameters $(v,k,t,\lambda,\mu)$ is a directed graph on $v$ vertices such that every vertex has indegree and outdegree~$k$ and for any two vertices $\alpha,\beta$ the number of directed paths of the form $\alpha\rightarrow \gamma \rightarrow  \beta$ is equal to $t$, $\lambda$, or $\mu$ when $\alpha=\beta$, $(\alpha,\beta)$ is an arc, or $(\alpha,\beta)$ is not an arc, respectively. The feasible parameters of DSRGs and information  and bibliography on them are collected on the web-page~\cite{BH}. 

Several DSRGs arising from the action of linear groups were found recently in~\cite{BCSZ}. Two nonisomorphic DSRGs with parameters $(63,11,8,1,2)$ were constructed in~\cite{BCS}. There is the following remark in that paper: ``The referee asked whether this can be generalized. But no, this is a sporadic situation''.  In the present paper, we construct a family of DSRGs containing two above mentioned graphs.

\begin{theo}\label{main1}
Let $p$ be a prime and $q$ a prime power such that 
$$p \equiv 3 \mod~4~\text{and}~q-1=p(p-3)/4.$$
Then there exist two nonisomorphic directed strongly regular graphs with parameters
$$\left((q+1)p,q+\frac{p-1}{2},q,\frac{q-1}{p},\frac{q-1}{p}+1\right)$$
and automorphism group isomorphic to $\PGaL(2,q)$.
\end{theo}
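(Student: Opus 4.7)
The plan is to realize each of the two DSRGs as a fusion of basic relations of a suitable Tatra association scheme on $(q+1)p$ points carrying a natural action of $\PGaL(2,q)$. Since $p \mid q-1$ (built into the hypothesis $q-1 = p(p-3)/4$), the multiplicative group $\GF(q)^\times$ has a unique subgroup $H$ of index $p$, and a natural transitive $\PGL(2,q)$-set of cardinality $(q+1)p$ is the coset space $\PGL(2,q)/B_H$, where $B_H = \GF(q) \rtimes H$ is the corresponding index-$p$ subgroup of a Borel subgroup; the extension to $\PGaL(2,q)$ is induced by Frobenius, and the orbital scheme of the resulting action is the intended Tatra scheme. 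Note that $q-1 = p(p-3)/4$ is equivalent to prescribing $\lambda = (q-1)/p = (p-3)/4$, so the arithmetic hypothesis is really two linked restrictions: the divisibility $p \mid q-1$ and the specific value of the quotient.

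I would begin by enumerating the basic relations of the Tatra scheme via a Bruhat-type analysis of $B_H$--$B_H$ double cosets in $\PGL(2,q)$, recording their valencies and intersection numbers. Guided by the decomposition $k = q + (p-1)/2$, I would then search for a union $\cR$ of basic relations whose indicator matrix $A$ satisfies
\[
AJ = JA = kJ \qquad \text{and} \qquad A^2 = t I + \lambda A + \mu (J-I-A)
\]
with $t=q$, $\lambda = (p-3)/4$, $\mu = (p-3)/4 + 1$. The summand $q$ in $k$ suggests including one ``large'' orbital of valency $q$ (the off-Borel one), while the term $(p-1)/2$ says that exactly half of the $p-1$ ``twisted'' orbitals linking different $H$-cosets must be taken. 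The hypothesis $p \equiv 3 \pmod 4$ enters as the arithmetic fact that $-1$ is a non-square modulo $p$, which controls how the $p-1$ twisted relations split into two halves, each closed under the inversion that exchanges an orbital with its transpose; the two inequivalent such halvings yield the two advertised nonisomorphic DSRGs. Expanding $A^2$ in the basis of basic relations then reduces the DSRG identity to a system of identities among intersection numbers whose validity hinges precisely on $\lambda$ being equal to $(p-3)/4$.

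Nonisomorphism should follow from the observation that the two halvings are not conjugate under any automorphism of the ambient Tatra scheme: any isomorphism between the two digraphs would have to lie in $\PGaL(2,q)$ (since the coherent closure of $A$ recovers the whole Tatra scheme), yet no element of $\PGaL(2,q)$ swaps the two halvings. The upper bound $\aut(\cR) \le \PGaL(2,q)$ then comes from the same coherent-closure observation combined with a (presumably earlier) identification of the automorphism group of the Tatra scheme as $\PGaL(2,q)$; the reverse inclusion is built into the construction. The main obstacle is the explicit verification of the identity $A^2 = tI + \lambda A + \mu(J-I-A)$: this requires computing enough intersection numbers to check that the conspiracy of $p \equiv 3 \pmod 4$ together with $\lambda = (p-3)/4$ really does make the numbers match, and separating the two combinatorially distinct fusions and proving they give genuinely nonisomorphic graphs (rather than merely relabelings) is the principal technical challenge.
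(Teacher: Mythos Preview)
Your overall strategy matches the paper's: take the Tatra scheme $\mathcal{X}_0$ with $n=p$, and form $\Gamma(i,g)=(\Omega,\,s_g\cup r_{C_i})$ where $C_1,C_2$ are the two nontrivial $M$-orbits on $C\cong C_p$ for $M\le\aut(C)$ of index~$2$. The verification that $A^2=tI+\lambda A+\mu(J-I-A)$ is exactly the computation you describe and is carried out in the paper using the structure constants of $\mathcal{X}_0$ together with the Paley identity $\underline{C_i}^{\,2}=\frac{p-3}{4}\underline{C_i}+\frac{p+1}{4}\underline{C_{3-i}}$. One small correction: the two halves are \emph{swapped} by inversion ($C_1^{(-1)}=C_2$ since $-1$ is a non-square mod~$p$), not fixed; that is precisely what makes $r_{C_i}$ antisymmetric and the graph genuinely directed.

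The real gap is in your treatment of the automorphism group and of nonisomorphism. The claim that ``the coherent closure of $A$ recovers the whole Tatra scheme'' is false in general, and so is the identification $\aut(\mathcal{X}_0)\cong\PGaL(2,q)$. The paper shows $\aut(\mathcal{X}_0)=(\GL(2,q)_K\rtimes\Sigma_0)^\Omega$, where $\Sigma_0$ is the kernel of $\Sigma=\aut(\mathbb{F}_q)$ on $C$; this can be strictly smaller than $(\GL(2,q)_K\rtimes\Sigma)^\Omega\cong\PGaL(2,q)$. For instance when $(p,q)=(7,8)$ one has $\Sigma_0=1$, so $\aut(\mathcal{X}_0)\cong\PGL(2,8)$ while $\aut(\Gamma)\cong\PGaL(2,8)$; hence the coherent closure of $\Gamma$ is a proper fusion of $\mathcal{X}_0$, and your route to the upper bound $\aut(\Gamma)\le\PGaL(2,q)$ collapses. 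The paper avoids coherent closures entirely: it proves directly (its Proposition~\ref{isonorm}) that any $f\in\sym(\Omega)$ with $s_g^f=s_g$ and $\{r_{C_1},r_{C_2}\}^f=\{r_{C_1},r_{C_2}\}$ already lies in $\iso(\mathcal{X}_0)=\GaL(2,q)^\Omega$, and then reads off both $\aut(\Gamma)$ and the nonisomorphism from the explicit action of $\GaL(2,q)$ on the basic relations.

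You are also missing a short but essential arithmetic step: the hypothesis $q-1=p(p-3)/4$ forces $d$ (with $q=r^d$) to be odd. This is what guarantees $\Sigma^C\le M$, so that Frobenius fixes each $C_i$; without it the Frobenius could swap $C_1$ and $C_2$, simultaneously destroying the inclusion $\Sigma\le\aut(\Gamma)$ and the nonisomorphism of $\Gamma(1,\cdot)$ and $\Gamma(2,\cdot)$.
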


\begin{rem1}
One can see that the number of pairs $(p,q)$ of primes such that $p \equiv 3 \mod~4$ and $q-1=p(p-3)/4$ is equal to the number of positive integers~$t$ such that both numbers $4t+3$ and $(4t+3)t+1$ are primes. The set consisting of all such~$t$ is infinite modulo the Bateman-Horn conjecture (see, e.g.~\cite{BH,JZ}, for details). Therefore the family of DSRGs from Theorem~\ref{main1} is infinite modulo the Bateman-Horn conjecture.
\end{rem1}

The smallest pair $(p,q)$ satisfying the conditions of Theorem~\ref{main1} is $(7,8)$. One can verify that this pair leads to two DSRGs with parameters $(63,11,8,1,2)$ from~\cite{BCS}. It can be checked by a computer calculation that there are exactly $328$ pairs of primes $(p,q)$ satisfying the condition of Theorem~\ref{main1} with $q\leq 10^9$. The first three of them are $(11,23)$, $(43,431)$, $(59,827)$.

The second result of the paper is concerned with divisible design graphs. A \emph{divisible design graph} (\emph{DDG} for short) with parameters $(v,k,\lambda_1,\lambda_2,m,n)$ is a $k$-regular graph on $v$ vertices whose vertex set can be partitioned into~$m$ classes of size~$n$ such that any two distinct vertices from the same class have exactly~$\lambda_1$ common neighbors and any two vertices from different classes have exactly~$\lambda_2$ common neighbors. The notion of a DDG was introduced in~\cite{HKM} as a generalization of the notion of a $(v,k,\lambda)$-graph which is an SRG with $\lambda=\mu$ or, equivalently, a DDG with $m=1$, or $n=1$, or $\lambda_1=\lambda_2$. A DDG is said to be \emph{proper} if $m,n>1$ and $\lambda_1\neq \lambda_2$. DDGs are precisely those graphs whose adjacency matrices are incidence matrices of symmetric divisible designs. For an information on DDGs, we refer the readers to~\cite{HKM}.

We provide a new family of DDGs. One of the key ingredients of the construction is a difference set in a cyclic group. Recall that a nonempty subset $D$ of a finite group $G$ is called a \emph{difference set} (\emph{DS} for short) in $G$ if there exists a nonnegative integer $\lambda$ such that every nontrivial element of $G$ has exactly $\lambda$ representations in the form $g_1g_2^{-1}$, where $g_1,g_2\in D$ (see~\cite[Chapter~VI]{BJL}). A DS $D$ is said to be \emph{nontrivial} if $2\leq |D|\leq |G|-2$ and \emph{trivial} otherwise. The above mentioned family of DDGs is given in the theorem below.

\begin{theo}\label{main2}
Let $q$ be a prime power and $n\geq 1$ a divisor of $q-1$ such that $q(q-1)/n$ is even. Suppose that a cyclic group of order~$n$ has a difference set (possibly, trivial) with parameters $(n,k,\lambda)$. Then there exists a divisible design graph with parameters
$$\left(n(q+1),kq,\lambda q,\frac{k^2(q-1)}{n},q+1,n\right).$$
\end{theo}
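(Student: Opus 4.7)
The plan is to realize the required graph as an explicit construction on the vertex set $V = Y \times PG(1,q)$, where $Y = GF(q)^*/H'$ for $H'$ the unique subgroup of $GF(q)^*$ of order $(q-1)/n$; the $q+1$ fibers $C_p := Y \times \{p\}$, each of size $n$, will be the classes of the divisible partition. First I would introduce an auxiliary ``determinant'' function $\sigma$ on ordered pairs of distinct points of $PG(1,q)$ by setting $\sigma([a_1:b_1],[a_2:b_2]) := (a_1 b_2 - a_2 b_1)H' \in Y$, where all projective points are fixed in the standard representatives $p = [a:1]$ for $a \in GF(q)$ and $\infty = [1:0]$. A short calculation shows that the hypothesis ``$q(q-1)/n$ is even'' is equivalent to $-1 \in H'$, which is precisely the condition that makes $\sigma$ symmetric in its two arguments as a function into $Y$. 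Writing $D \subseteq Y$ for the given difference set, I would then define the graph $\Gamma$ by declaring $(y_1,p_1) \sim (y_2,p_2)$ iff $p_1 \neq p_2$ and $y_1 y_2 \in \sigma(p_1,p_2)\cdot D$; since both $y_1 y_2$ and $\sigma(p_1,p_2)$ are invariant under swapping the two endpoints, this adjacency is symmetric and loop-free.

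Regularity and the within-class count should fall out immediately from the difference-set property. For fixed $(y_1,p_1)$ and fixed $p_2 \neq p_1$, the neighbors of $(y_1,p_1)$ in $C_{p_2}$ form the coset $\sigma(p_1,p_2)\,y_1^{-1} D$ of cardinality $k$, so summing over the $q$ classes other than $C_{p_1}$ gives degree $kq$. For two distinct vertices $(y_1,p),(y_2,p)$ of a single class, the common neighbors in $C_{p'}$ for any $p' \neq p$ biject with pairs $(d_1,d_2) \in D \times D$ satisfying $d_1 d_2^{-1} = y_1 y_2^{-1} \neq 1$; the difference-set condition supplies exactly $\lambda$ such pairs, and summing over the $q$ choices of $p'$ yields $\lambda_1 = \lambda q$.

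The hard part will be the cross-class count. For $(y_1,p_1),(y_2,p_2)$ with $p_1 \neq p_2$ and $p' \neq p_1,p_2$, the common neighbors of the two vertices in $C_{p'}$ biject with pairs $(d_1,d_2) \in D \times D$ satisfying $d_1 d_2^{-1} = \rho(p') \cdot y_1 y_2^{-1}$, where $\rho(p') := \sigma(p_2,p')/\sigma(p_1,p') \in Y$; this gives $k$ pairs when $\rho(p') = y_2 y_1^{-1}$ and $\lambda$ pairs otherwise. The crux is therefore a uniformity lemma: the map $\rho$ lifts to the M\"obius transformation $[a:b] \mapsto (a_2 b - a b_2)/(a_1 b - a b_1)$ with a pole at $p_1$ and a zero at $p_2$, and so restricts to a bijection $PG(1,q) \setminus \{p_1,p_2\} \to GF(q)^*$; composing with the projection $GF(q)^* \to Y$ yields an $|H'|$-to-one map, so $|\{p' \neq p_1,p_2 : \rho(p') = y_2 y_1^{-1}\}| = (q-1)/n$ independently of $y_1$ and $y_2$. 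The total cross-class common-neighbor count therefore equals
$$
(q-1)\lambda + (k-\lambda)\frac{q-1}{n} = \frac{(q-1)\bigl(k + (n-1)\lambda\bigr)}{n} = \frac{k^2(q-1)}{n}
$$
by the difference-set identity $(n-1)\lambda = k(k-1)$, which is exactly $\lambda_2$ and completes the verification.
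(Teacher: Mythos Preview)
Your argument is correct and complete: the construction is well-defined (the parity hypothesis is exactly $-1\in H'$, making $\sigma$ symmetric), the within-class count follows directly from the difference-set identity, and the M\"obius transformation $[a:b]\mapsto (a_2b-ab_2)/(a_1b-ab_1)$ is indeed a bijection $PG(1,q)\setminus\{p_1,p_2\}\to\mathbb{F}_q^*$ (its matrix has determinant $-\det(\hat p_1,\hat p_2)\neq 0$), so $\rho$ is $(q-1)/n$-to-one and the cross-class count reduces to $(n-1)\lambda=k(k-1)$ as you say. One cosmetic point: you silently use that there are no common neighbours in $C_{p_1}$ or $C_{p_2}$, which is immediate since the graph has no intra-class edges.

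Your route is genuinely different from the paper's. The paper works inside the adjacency algebra of the Tatra scheme $\mathcal{X}_0$: writing $B_D=\sum_{g\in D}B_g$ for the adjacency matrix of $\Delta(D)=(\Omega,s_D)$, it invokes the identity $B_D^2=kqA_e+\lambda q A_{C^\#}+k^2mB_C$ (an instance of Lemma~3.2(3) with $\xi=\eta=\underline{D}$ and the difference-set relation $\underline{D}^{(-1)}\cdot\underline{D}=ke+\lambda\underline{C^\#}$, $\varepsilon=k^2$) and then reads off the DDG parameters from the matrix criterion of Lemma~\ref{ddg0}. So the paper's ``uniformity'' step is hidden in the single scheme relation $B_hB_g=qA_{h^{-1}g}+m(J-A_C)$, whereas you prove the analogous fact by hand via the M\"obius map. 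Your approach is more elementary and self-contained---it needs neither the association-scheme formalism nor the cited computation from Reichard---while the paper's approach is shorter and slots naturally into the Tatra-scheme framework it is already using for Theorem~\ref{main1}. Note also that your vertex set $Y\times PG(1,q)$ is a coordinatisation of the paper's $\Omega$ obtained by fixing a representative vector for each projective point; under that identification your graph is $\Delta(D')$ for a translate $D'$ of $D^{-1}$, which of course has the same parameters.
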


The graphs from Theorems~\ref{main1} and~\ref{main2} are constructed as fusions of one class of association schemes, namely, so-called \emph{Tatra schemes}. This class of schemes arises from a symmetric bilinear form defined on the equivalence classes of nonzero $2$-dimensional vectors modulo some subgroup of the multiplicative group of a finite field. Tatra schemes were introduced in~\cite{Reich} and generalize the construction from~\cite{DF}. In fact, the equivalent construction was described in other terms in~\cite[Section~4]{KhS}. A significant part of the paper is devoted to studying of Tatra schemes. Namely, we
\begin{enumerate}

\tm{1} describe a family of noncommutative schemes which are fusions of Tatra schemes (Theorem~\ref{fuse});

\tm{2} find the automorphism and isomorphism groups of Tatra schemes (Theorem~\ref{mainautiso}).

\end{enumerate} 

We finish the introduction with a brief outline of the paper. Section~$2$ contains a necessary background of association schemes, $S$-rings, graphs, and difference sets. In Section~$3$, we describe the construction of Tatra schemes and study their fusions and isomorphisms. In Sections~$4$ and~$5$, we prove Theorems~\ref{main1} and~\ref{main2}, respectively.

\vspace{5mm}

\noindent \textbf{Notation.} 

\vspace{2mm}

\noindent If $K\leq \sym(\Omega)$, $\alpha\in \Omega$, and $\Lambda\subseteq \Omega$, then the one-point stabilizer of $\alpha$ in $K$ and the setwise stabilizer of $\Lambda$ in $K$ are denoted by $K_{\alpha}$ and $K_{\{\Lambda\}}$, respectively. 

\vspace{2mm}

\noindent If $\Lambda$ is a $K$-invariant subset of $\Omega$ and $\mathcal{L}$ is an imprimitivity system for $K$, then the permutation groups induced by $K$ on $\Lambda$ and $\mathcal{L}$ are denoted by $K^\Lambda$ and $K^\mathcal{L}$, respectively. If $f\in K$, then the permutations induced by $f$ on $\Lambda$ and $\mathcal{L}$ are denoted by $f^\Lambda$ and $f^\mathcal{L}$, respectively. 

\vspace{2mm}

\noindent  For short, put $(K_{\{\Lambda\}})^\Lambda=K_{\{\Lambda\}}^\Lambda$, $(K_{\{\Lambda\}})^{\mathcal{L}}=K_{\{\Lambda\}}^{\mathcal{L}}$, $(K_{\alpha})^{\mathcal{L}}=K_{\alpha}^{\mathcal{L}}$, where $\mathcal{L}$ is an imprimitivity system for $K$, $\Lambda\in \mathcal{L}$, and $\alpha\in \Omega$.

\vspace{2mm}

\noindent The set of all orbits of $K$ on $\Omega$ is denoted by $\orb(K,\Omega)$.

\vspace{2mm}

\noindent If $H\leq G$, then the centralizer and normalizer of $H$ in $G$ are denoted by $C_G(H)$ and $N_G(H)$, respectively. The center of a group $G$ is denoted by $Z(G)$.  

\vspace{2mm}

\noindent The general linear, projective general linear, general semilinear, projective general semilinear, special linear, projective special linear groups of dimension~$n$ over a field of order~$q$ are denoted by $\GL(n,q)$, $\PGL(n,q)$, $\GaL(n,q)$, $\PGaL(n,q)$, $\SL(n,q)$, $\PSL(n,q)$, respectively. 

\vspace{2mm}

\noindent The Cayley digraph with connection set $S$ is denoted by $\cay(G,S)$.

\vspace{2mm}

\noindent Given a positive integer $n$, the identity and all-identity matrices of size $(n\times n)$ are denoted by $I_n$ and $J_n$, respectively. If $n$ is clear from the context, then we write just $I$ or $J$. 

\vspace{2mm}

\noindent Given a binary relation $s\subseteq \Omega^2$, the adjacency matrix of~$s$ is denoted by $A(s)$.

\section{Preliminaries}
In this section, we provide a necessary background of association schemes, $S$-rings, graphs, and difference sets.

\subsection{Association schemes}
We follow~\cite{CP} in the presentation of the material on association schemes.  

Let $\Omega$ be a finite set. The diagonal of $\Omega^2$ is denoted by $1_{\Omega}$. Given a binary relation $s\subseteq \Omega^2$ and $\alpha\in \Omega$, put $\alpha s=\{\beta\in \Omega:~(\alpha,\beta)\in s\}$ and $s^*=\{(\gamma,\beta):~(\beta,\gamma)\in s\}$. Given a collection $S$ of binary relations on $\Omega^2$, put $S^*=\{s^*:~s\in S\}$. Given $r,s\subseteq \Omega^2$, a composition of $r$ and $s$ is denoted by $rs$. 

Let $S$ be a partition of $\Omega^2$. The pair $\mathcal{X}=(\Omega,S)$ is called an \emph{association scheme} or just a \emph{scheme} on $\Omega$ if $1_{\Omega}\in S$, $S^{*}=S$, and given $r,s,t\in S$ the number
$$c_{rs}^t=|\alpha r\cap \beta s^{*}|$$
does not depend on the choice of $(\alpha,\beta)\in t$. The elements of $S$, the numbers $c_{rs}^t$, and the number $\rk(\mathcal{X})=|S|$ are called the \emph{basic relations}, the \emph{intersection numbers}, and the \emph{rank} of $\mathcal{X}$, respectively. It is easy to verify that $(\Omega,\{1_\Omega,\Omega^2\setminus 1_{\Omega}\})$ is a scheme. Such scheme is said to be \emph{trivial}.

A scheme is said to be \emph{commutative} if $c_{rs}^t=c_{sr}^t$ for all $r,s,t\in S$ and \emph{noncommutative} otherwise. Given $r\in S$, the number $n_r=c_{rr^*}^{\textbf{1}_\Omega}$ is called the \emph{valency} of $r$. It is easy to see that $n_r=|\alpha r|$ for every $\alpha\in \Omega$. An equivalence relation on $\Omega$ is defined to be a \emph{parabolic} of $\mathcal{X}$ if it is a union of some basic relations of $\mathcal{X}$. The set $\O_{\theta}(S)$ of all basic relations of $\mathcal{X}$ of valency~$1$ form a group with respect to a composition called the \emph{thin radical} of $\mathcal{X}$. The union of all basic relations from $\O_{\theta}(S)$ is a parabolic of $\mathcal{X}$ called the \emph{thin radical parabolic}.

Let $\mathcal{X}=(\Omega,S)$ and $\mathcal{X}^{\prime}=(\Omega^{\prime},S^{\prime})$ be schemes. An \emph{isomorphism} from $\mathcal{X}$ to $\mathcal{X}^{\prime}$ is defined to be a bijection $f:\Omega\rightarrow \Omega^{\prime}$ such that $S^{\prime}=S^f$, where $S^f=\{s^f:~s\in S\}$ and $s^f=\{(\alpha^f,\beta^f):~(\alpha,~\beta)\in s\}$. The group $\iso(\mathcal{X})$ of all isomorphisms from $\mathcal{X}$ onto itself has a normal subgroup
$$\aut(\mathcal{X})=\{f\in \iso(\mathcal{X}): s^f=s~\text{for every}~s\in S\}$$
called the \emph{automorphism group} of $\mathcal{X}$. If $\mathcal{X}$ is trivial, then $\aut(\mathcal{X})=\sym(\Omega)$.

The set of all equivalence classes of each parabolic of $\mathcal{X}$ form an imprimitivity system for $\aut(\mathcal{X})$. If $\Lambda$ is a class of the thin radical parabolic, then 
\begin{equation}\label{thinrad0}
\aut(\mathcal{X})_{\{\Lambda\}}^{\Lambda}~\text{is a regular group isomorphic to}~\O_{\theta}(S).
\end{equation}

A scheme $\mathcal{X}=(\Omega,S)$ is said to be \emph{schurian} if there exists a permutation group $K\leq \sym(\Omega)$ such that $S=\orb(K,\Omega^2)$, where $K$ acts on $\Omega^2$ componentwise. One can verify that $\mathcal{X}$ is schurian if and only if $S=\orb(\aut(\mathcal{X}),\Omega^2)$. 

One can define a partial order on the set of all schemes on $\Omega$. Namely, put $\mathcal{X}\leq \mathcal{X}^\prime$ if every basic relation of $\mathcal{X}$ is a union of some basic relations of $\mathcal{X}^\prime$. In this case, we say that $\mathcal{X}$ is a \emph{fusion} of $\mathcal{X}^\prime$. The trivial scheme is the smallest element with respect to this order. 


Let $\mathcal{X}=(\Omega,S)$ be a scheme. The linear space $\mathcal{M}=\mathcal{M}(\mathcal{X})=\Span_{\mathbb{C}}\{A(s):~s\in S\}$ is an algebra with respect to a matrix multiplication and the intersection numbers of $\mathcal{X}$ are the structure constants of $\mathcal{M}$ with respect to the basis $\{A(s):~s\in S\}$. 

Conversely, if $S$ is a partition of $\Omega^2$ closed under taking the inverse relation, $1_\Omega\in S$, and $\Span_{\mathbb{C}}\{A(s):~s\in S\}$ is an algebra with respect to a matrix multiplication, then $\mathcal{X}=(\Omega,S)$ is a scheme.

\subsection{$S$-rings}
In this subsection, we provide a necessary background of $S$-rings which are special subrings of the group ring introduced in the classical papers of Schur and Wielandt~\cite{Schur,Wi}. We use the notations and terminology from~\cite[Section~2.4]{CP} and~\cite{Ry}.

Let $G$ be a finite group and $\mathbb{Z}G$ the group ring over the integers. The identity element of $G$ and the set of all nonidentity elements of $G$ are denoted by $e$ and $G^\#$, respectively. Let $\xi=\sum_{g\in G} a_g g,\eta=\sum_{g\in G} b_g g\in\mathbb{Z}G$. The product of $\xi$ and $\eta$ will be written as $\xi\cdot \eta$. The element $\sum_{g\in G} a_g g^{-1}$ is denoted by $\xi^{(-1)}$. The number $\sum_{g\in G} a_g$ is denoted by $\varepsilon(\xi)$. Given $X\subseteq G$, we set 
$$\underline{X}=\sum \limits_{x\in X} {x}\in\mathbb{Z}G~\text{and}~X^{(-1)}=\{x^{-1}:x\in X\}.$$

A subring  $\mathcal{A}\subseteq \mathbb{Z}G$ is called an \emph{$S$-ring} (a \emph{Schur ring}) over $G$ if there exists a partition $\mathcal{S}=\mathcal{S}(\mathcal{A})$ of~$G$ such that:
\begin{enumerate}

\tm{1} $\{e\}\in\mathcal{S}$;

\tm{2} if $X\in\mathcal{S}$, then $X^{(-1)}\in\mathcal{S}$;

\tm{3} $\mathcal{A}=\Span_{\mathbb{Z}}\{\underline{X}:~X\in\mathcal{S}\}$.
\end{enumerate}

\noindent  The elements of $\mathcal{S}$ are called the \emph{basic sets} of $\mathcal{A}$ and the number $\rk(\mathcal{A})=|\mathcal{S}|$ is called the \emph{rank} of~$\mathcal{A}$. 

If $X\subseteq G$ and $\underline{X}\in \mathcal{A}$, then $X$ is defined to be an \emph{$\mathcal{A}$-set}. A subgroup $A$ of $G$ such that $\underline{A}\in \mathcal{A}$ is called an \emph{$\mathcal{A}$-subgroup}. If $A$ is an $\mathcal{A}$-subgroup, then put $\mathcal{S}(\mathcal{A})_A=\{X\in \mathcal{S}(\mathcal{A}):~X\subseteq A\}$.

For every $M\leq \aut(G)$, the partition of $G$ into the orbits of $M$ defines the $S$-ring $\mathcal{A}$ over~$G$. In this case, $\mathcal{A}$ is called \emph{cyclotomic} and denoted by $\cyc(M,G)$.

The \emph{automorphism group} $\aut(\mathcal{A})$ of $\mathcal{A}$ is defined to be the group 
$$\bigcap \limits_{X\in \mathcal{S}(\mathcal{A})} \aut(\cay(G,X)).$$

\subsection{Graphs}
In this subsection, we recall two criteria for a graph to be SRG or DDG. The first of them can be found in~\cite[Eqs.~1.4-1.5]{Duv}, whereas the second one in~\cite[Eq.~1]{HKM}.

\begin{lemm}\label{dsrg0}
A digraph $\Gamma$ is a directed strongly regular graph with parameters $(v,k,t,\lambda,\mu)$ if and only if the adjacency matrix $A$ of $\Gamma$ satisfies the following equalities:
\begin{enumerate}

\tm{1} $AJ_v=J_vA=kJ_v$,

\tm{2} $A^2=tI_v+\lambda A+\mu(J_v-I_v-A)$.

\end{enumerate}
Moreover, $\Gamma$ is a strongly regular graph if and only if $t=k$.
\end{lemm}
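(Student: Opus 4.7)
The plan is to translate the combinatorial definition of a DSRG directly into matrix form, by comparing the entries of the two sides of each proposed equality. This is a routine computation, so I will only sketch the correspondence.

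For condition~(1), the $(\alpha,\beta)$ entry of $AJ_v$ equals the $\alpha$th row sum of $A$, which is the outdegree of $\alpha$; similarly the $(\alpha,\beta)$ entry of $J_vA$ equals the $\beta$th column sum of $A$, which is the indegree of $\beta$. Hence $AJ_v=J_vA=kJ_v$ holds if and only if every vertex has indegree and outdegree equal to $k$, which is exactly the regularity requirement in the definition of a DSRG.

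For condition~(2), the $(\alpha,\beta)$ entry of $A^2$ equals $\sum_{\gamma}A_{\alpha\gamma}A_{\gamma\beta}$, which is precisely the number of directed paths $\alpha\to\gamma\to\beta$. On the other hand, the $(\alpha,\beta)$ entry of the matrix $tI_v+\lambda A+\mu(J_v-I_v-A)$ equals $t$ when $\alpha=\beta$, equals $\lambda$ when $(\alpha,\beta)$ is an arc (so $A_{\alpha\beta}=1$ and the diagonal and ``non-arc'' terms vanish), and equals $\mu$ when $\alpha\neq\beta$ and $(\alpha,\beta)$ is not an arc. Thus equality in~(2) is exactly the path-counting condition in the definition of a DSRG.

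For the final clause, suppose first that $\Gamma$ is an undirected SRG with valency $k$. Viewing $\Gamma$ as a symmetric digraph, the diagonal entries of $A^2$ equal $k$ (each row of $A$ has exactly $k$ ones and $A=A^\top$), so $t=k$. Conversely, if $t=k$, then the diagonal entries of $A^2$ equal the outdegree, that is $\sum_\gamma A_{\alpha\gamma}A_{\gamma\alpha}=\sum_\gamma A_{\alpha\gamma}$. Since each $A_{\gamma\alpha}\in\{0,1\}$ and the sum is taken only over $\gamma$ with $A_{\alpha\gamma}=1$, this forces $A_{\gamma\alpha}=1$ whenever $A_{\alpha\gamma}=1$. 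Hence $A=A^\top$ and $\Gamma$ is an undirected graph, and conditions (1)--(2) then reduce to the usual defining equations of an SRG with parameters $(v,k,\lambda,\mu)$.

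The argument is purely a matter of unpacking matrix entries; the only mildly nontrivial point is the ``$t=k$ implies symmetric'' direction, which uses that $A$ is a $0/1$ matrix so that a sum of at most $k$ indicator values attaining the maximum $k$ forces each summand to be $1$.
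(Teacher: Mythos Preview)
Your proof is correct. The paper does not actually prove this lemma; it simply cites it as \cite[Eqs.~1.4--1.5]{Duv}, treating it as a known characterization. Your entrywise translation between the combinatorial definition and the matrix identities is the standard argument, and your handling of the ``$t=k$ implies symmetric'' direction is clean and correct.
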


\begin{lemm}\label{ddg0}
A graph $\Gamma$ is a divisible design graph with parameters $(v,k,\lambda_1,\lambda_2,m,n)$ if and only if the adjacency matrix $A$ of $\Gamma$ satisfies the following equality:
$$A^2=kI_v+\lambda_1(I_m\otimes J_n-I_v)+\lambda_2(J_v-I_m\otimes J_n).$$
\end{lemm}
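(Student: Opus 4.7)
The plan is to interpret both sides of the proposed matrix equation entry by entry, using the combinatorial meaning of $A^2$ together with the structure of $I_m \otimes J_n$ under a vertex ordering compatible with the class partition.

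First I would record the two standard observations. Because $\Gamma$ is undirected and $A$ is a symmetric $0/1$-matrix with zero diagonal, the entry $(A^2)_{ij}$ equals the number of common neighbors $|N(i)\cap N(j)|$ when $i\ne j$, and equals the degree of vertex $i$ when $i=j$. Next, after ordering the vertices so that each of the $m$ classes of size $n$ consists of a block of consecutive indices, $(I_m\otimes J_n)_{ij}=1$ exactly when $i,j$ lie in the same class; hence $I_m\otimes J_n - I_v$ is the indicator of ``same class, distinct'' pairs and $J_v - I_m\otimes J_n$ is the indicator of ``different class'' pairs. The three matrices $I_v$, $I_m\otimes J_n-I_v$, $J_v-I_m\otimes J_n$ therefore form a partition of the all-ones matrix $J_v$ into three disjoint $0/1$-matrices whose supports are precisely the diagonal, the ``same-class off-diagonal'' positions, and the ``different-class'' positions.

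For the forward direction, suppose $\Gamma$ is a DDG with the given parameters and the given class partition. By $k$-regularity, the diagonal of $A^2$ equals $kI_v$. By the defining property of a DDG, for $i\ne j$ in the same class $(A^2)_{ij}=\lambda_1$, and for $i,j$ in different classes $(A^2)_{ij}=\lambda_2$. Summing these three contributions using the indicator decomposition above yields exactly the claimed identity for $A^2$.

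For the converse, suppose the matrix equation holds. Comparing diagonal entries on both sides gives $(A^2)_{ii}=k$ for every $i$, i.e.\ every vertex has degree $k$, so $\Gamma$ is $k$-regular. Comparing off-diagonal entries in the ``same-class'' positions gives $(A^2)_{ij}=\lambda_1$ and comparing off-diagonal entries in the ``different-class'' positions gives $(A^2)_{ij}=\lambda_2$, which are precisely the two common-neighbor conditions defining a DDG with parameters $(v,k,\lambda_1,\lambda_2,m,n)$.

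I do not foresee a genuine obstacle: the lemma is essentially a dictionary between the combinatorial definition and a single quadratic matrix relation, and the only point requiring a little care is fixing the vertex ordering so that $I_m\otimes J_n$ really encodes the class partition. Once that normalization is made, the argument reduces to reading off the three disjoint entry types, which is why the identity can be quoted as a single equation from~\cite{HKM}.
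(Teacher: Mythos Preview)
Your argument is correct: the entry-by-entry reading of $A^2$ against the three-part decomposition $I_v$, $I_m\otimes J_n-I_v$, $J_v-I_m\otimes J_n$ is exactly the standard translation between the combinatorial definition of a DDG and the quadratic matrix identity, and your remark about fixing a vertex ordering compatible with the class partition is the only point that needs care. The paper does not supply its own proof of this lemma; it simply quotes the identity from~\cite[Eq.~1]{HKM}, so there is nothing further to compare.
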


\subsection{Difference sets}
A nonempty subset $D$ of $G$ is called a \emph{difference set} (\emph{DS} for short) in $G$ if 
$$\underline{D}\cdot\underline{D}^{-1}=ke+\lambda \underline{G}^\#,$$
where $k=|D|$ and $\lambda$ is a nonnegative integer. The numbers $(v,k,\lambda)$, where $v=|G|$, are called the \emph{parameters} of $D$. It is easy to check that $G\setminus D$ is a DS with parameters $(v,v-k,v-2k+\lambda)$. The set~$G$ and every subset of $G$ of size~$1$ or~$|G|-1$ are DSs. The DS $D$ is said to be \emph{nontrivial} if $2\leq|D|\leq |G|-2$ and \emph{trivial} otherwise. For the general theory of DSs, we refer the readers to~\cite[Chapter~VI]{BJL}.

Two DSs $D_1$ and $D_2$ in groups $G_1$ and $G_2$, respectively, are said to be \emph{equivalent} if there exist an isomorphism $\varphi$ from $G_1$ to $G_2$ and $g\in G_2$ such that $D_1^\varphi=D_2g$. If $D$ is a DS, then the incidence structure $\dev(D)=(G,\mathcal{B})$ with point set~$G$ and block set~$\mathcal{B}=\{Dg:~g\in G\}$ is a symmetric block design. The designs $\dev(D_1)=(G_1,\mathcal{B}_1)$ and $\dev(D_2)=(G_2,\mathcal{B}_2)$ are said to be \emph{isomorphic} if there exist bijections $\varphi_1:G_1\rightarrow G_2$ and $\varphi_2:\mathcal{B}_1\rightarrow\mathcal{B}_2$ such that 
$$g\in B\Leftrightarrow g^{\varphi_1}\in B^{\varphi_2}$$
for all $g\in G_1$ and $B\in \mathcal{B}_1$. It is easy to verify that if $D_1$ and $D_2$ are equivalent, then $\dev(D_1)$ and $\dev(D_2)$ are isomorphic. The converse statement does not hold in general, however it holds in case of a cyclic group of prime order. This follows from the lemma below which is a corollary of~\cite[Theorem~A]{Palfy}.

\begin{lemm}\label{palfy}
Let $D_1$ and $D_2$ be difference sets in a cyclic group of prime order. Then $\dev(D_1)$ and $\dev(D_2)$ are isomorphic if and only if $D_1$ and $D_2$ are equivalent.
\end{lemm}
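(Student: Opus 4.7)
The forward direction, that equivalence of $D_1, D_2$ implies isomorphism of $\dev(D_1),\dev(D_2)$, is noted immediately before the statement. The plan for the converse is to view each $\dev(D_i)=(G,\mathcal{B}_i)$, with $G$ the cyclic group of prime order $p$ and $\mathcal{B}_i=\{D_i g : g\in G\}$, as a relational structure on $G$ admitting the right regular representation of $G$ as a regular subgroup of automorphisms, and then to invoke P\'alfy's theorem.

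Concretely, write $k=|D_1|=|D_2|$ and encode $\mathcal{B}_i$ as the $k$-ary relation $R_i\subseteq G^k$ consisting of all tuples whose underlying set is a block of $\dev(D_i)$. Right multiplication by $G$ preserves each $\mathcal{B}_i$, hence each $R_i$, and acts regularly on $G$. In a nontrivial symmetric design distinct blocks have distinct point sets, so a design isomorphism $\dev(D_1)\to\dev(D_2)$ in the sense of the excerpt is the same datum as a bijection $\psi:G\to G$ with $\psi(\mathcal{B}_1)=\mathcal{B}_2$, equivalently an isomorphism $(G,R_1)\cong(G,R_2)$ of relational structures; the degenerate cases $|D_i|\in\{1,|G|-1,|G|\}$ are handled directly. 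P\'alfy's Theorem~A \cite{Palfy} asserts that for a cyclic group $G$ of order coprime to $|\aut(G)|$ --- in particular for $|G|=p$ prime, since $\gcd(p,p-1)=1$ --- two isomorphic relational structures on $G$ each admitting the right regular representation of $G$ as a regular subgroup of automorphisms must differ by an element of $\aut(G)$. Applied to $(G,R_1)$ and $(G,R_2)$ this yields $\alpha\in\aut(G)$ with $\alpha(\mathcal{B}_1)=\mathcal{B}_2$, so since $D_1\in\mathcal{B}_1$ there is $g_0\in G$ with $D_1^\alpha=\alpha(D_1)=D_2 g_0$, which is the definition of equivalence.

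The only substantive issue is the translation between the combinatorial formulation of the excerpt and the relational-structure language in which P\'alfy's theorem is phrased; once that is in place, the content of the lemma is carried entirely by P\'alfy's theorem. The hypothesis of prime order enters in exactly one place, namely to verify the numerical condition $\gcd(|G|,|\aut(G)|)=1$ under which P\'alfy's CI-conclusion is available.
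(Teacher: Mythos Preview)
Your proposal is correct and follows the same approach as the paper, which simply asserts that the lemma is a corollary of P\'alfy's Theorem~A without spelling out the translation; you have supplied exactly that translation (encoding $\dev(D_i)$ as a $G$-invariant relational structure, reducing design isomorphism to a point bijection via the fact that distinct blocks of a nontrivial symmetric design are distinct as point sets, and then invoking the CI-property for cyclic groups of prime order). The only minor remark is phrasing: ``must differ by an element of $\aut(G)$'' should read ``are isomorphic via an element of $\aut(G)$,'' but your subsequent deduction uses the correct interpretation.
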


\section{Tatra schemes}

In this section, we recall the construction and study some properties of so-called Tatra schemes from~\cite{Reich} which is the key ingredient in the proofs of the main results. It should be noted that the equivalent construction was given independently in~\cite{KhS}. Noncommutative fusions of rank~$6$ of such schemes in a special case were studied in~\cite{DF}.

\subsection{Construction}

Let $r$ be a prime, $q=r^d$ for some $d\geq 1$, and $\mathbb{F}=\mathbb{F}_q$ a field of order~$q$. The multiplicative group of $\mathbb{F}$ is denoted by $\mathbb{F}^*$. Let $n$ be a divisor of $q-1$ such that
\begin{equation}\label{maineq} 
\frac{q(q-1)}{n}~\text{is even}, 
\end{equation}
$K$ a subgroup of $\mathbb{F}^*$ of index~$n$, and $C=\mathbb{F}^*/K$. Clearly, $C\cong C_n$ and $m=|K|=(q-1)/n$ is even whenever $q$ is odd. We denote the identity element of $C$ by~$e$. If $x\in\mathbb{F}$ and $g=Ky\in C$, then put $xg=K(xy)$. It should be mentioned that $q$ and $n$ are assumed to be odd in~\cite{Reich}. However, the only essential condition for the construction is that $(-1)\in K$ or, equivalently, $m$ is even if $q$ is odd. The latter is guaranteed by Eq.~\eqref{maineq}.

Let $V$ be a $2$-dimensional vector space over $\mathbb{F}$ and 
$$\Omega=\{Kv:~v\in V\setminus \{0\}\},$$
where $Kv=\{xv:~x\in K\}$. Given $\alpha=Kv\in \Omega$, put 
$$C\alpha=\{g\alpha:~g\in C\}=\{K(xv):~x\in \mathbb{F}\}.$$
One can see that $|\Omega|=|V\setminus \{0\}|/|K|=(q^2-1)/m=(q+1)n$. Let
$$\langle\cdot,~\cdot \rangle: \Omega\times \Omega \rightarrow C\cup \{0\}$$
be the form defined as follows:
$$\langle Ku,Kv\rangle=K\det(u,v),$$
where for the vectors $u=(u_1,u_2)^T$ and $v=(v_1,v_2)^T$, 
$$\det(u,v)=u_1v_2-u_2v_1.$$
It is easy to see that $\langle Ku,Kv\rangle=0$ if and only if $Kv=gKu$ for some $g\in C$. The form $\langle\cdot,~\cdot \rangle$ is well-defined by~\cite[Lemma~3]{Reich}. It is also symmetric. Indeed, this is clear if $q$ is even. If $q$ is odd, then $m$ is even and hence $(-1)\in K$ which implies the symmetry of $\langle\cdot,~\cdot \rangle$.

Given $g\in C$, let us define two binary relations $r_g$ and $s_g$ on $\Omega^2$ as follows:
$$r_g=\{(\alpha,\beta)\in \Omega^2:~\langle \alpha,\beta \rangle=0,~\beta=g\alpha\},$$
$$s_g=\{(\alpha,\beta)\in \Omega^2:~\langle \alpha,\beta \rangle=g\}.$$
Put $A_g=A(r_g)$ and $B_g=A(s_g)$. Until the end of the paper, we assume that $I=I_{(q+1)n}$ and $J=J_{(q+1)n}$. Observe that $r_e=1_{\Omega}$ and hence $A_e=I$.

\begin{prop}\cite[Theorem~3]{Reich}\label{scheme}
With the above notation, the pair $\mathcal{X}_0=(\Omega,S_0)$, where $S_0=\{r_g,s_g:~g\in C\}$, is a scheme of rank~$2n$. Moreover, 
\begin{enumerate}
\tm{1} $A_hA_g=A_gA_h=A_{hg}$,

\tm{2} $A_hB_g=B_{h^{-1}g}$, $B_gA_h=B_{gh}$,

\tm{3} $B_hB_g=qA_{h^{-1}g}+m(J-A_C)$,

\tm{4} $n_{r_g}=1$, $n_{s_g}=q$,

\tm{5} $r_g^*=r_{g^{-1}}$, $s_g^*=s_g$
\end{enumerate}
for all $h,g\in C$.
\end{prop}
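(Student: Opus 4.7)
The plan is to verify that $S_0$ is a partition of $\Omega^2$ closed under inversion with $1_\Omega \in S_0$, and that the $\mathbb{C}$-span of $\{A_g,B_g : g \in C\}$ is closed under matrix multiplication; the converse criterion recorded at the end of Section~2.1 will then force $\mathcal{X}_0$ to be a scheme, with rank $2n$ read off from the fact that the $2n$ relations are all nonempty and pairwise distinct (distinctness of the $r_g$ uses that $C$ acts freely on $\Omega$, since $g\alpha=\alpha$ with $\alpha=Ku$ and $x \in g$ forces $x \in K$).

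For any $(\alpha,\beta) \in \Omega^2$, the value $\langle\alpha,\beta\rangle$ is either $0$ (in which case $\beta = g\alpha$ for a unique $g \in C$, so $(\alpha,\beta)\in r_g$) or a unique element of $C$ (so $(\alpha,\beta) \in s_g$), giving the partition. The identity $r_e = 1_\Omega$ is immediate, and assertion (5) follows from $\det(v,u) = -\det(u,v)$ combined with $-1 \in K$ (forced by Eq.~\eqref{maineq} when $q$ is odd, and trivial in characteristic two). For the valencies in (4), $\alpha r_g = \{g\alpha\}$, and for $\alpha = Ku$ the set $\alpha s_g$ consists of the $K$-orbits inside $\{w \in V : \det(u,w) \in Kg\}$; since $\det(u,\cdot)$ is linear with one-dimensional kernel $\mathbb{F}u$, that set has $mq$ nonzero elements and splits into $q$ orbits of size~$m$.

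The identities in (1)--(2) are routine: $A_g$ is the permutation matrix of the bijection $\alpha \mapsto g\alpha$ of $\Omega$, and commutativity of $C$ gives $A_hA_g = A_gA_h = A_{hg}$. For (2) I compute $(A_hB_g)_{\alpha\beta} = (B_g)_{h\alpha,\beta}$ and use $\langle h\alpha,\beta\rangle = h\langle\alpha,\beta\rangle$ (bilinearity of $\det$ in the first slot) to get $A_hB_g = B_{h^{-1}g}$; symmetrically $B_gA_h = B_{gh}$.

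The crux is (3), where $(B_hB_g)_{\alpha\beta} = |\{\gamma = Kw : \langle\alpha,\gamma\rangle = h,\ \langle\gamma,\beta\rangle = g\}|$, and I split on whether $\beta \in C\alpha$. If $\beta = g'\alpha$ with representatives $u$ and $v = \lambda u$, $K\lambda = g'$, then the two conditions become $\det(u,w) \in Kh$ and $-\lambda\det(u,w) \in Kg$, which are jointly solvable iff $g'h = g$ (i.e.\ $\beta = (h^{-1}g)\alpha$); when this holds, the first condition alone has $q$ solution cosets by the valency count, contributing the term $qA_{h^{-1}g}$. If instead $u,v$ are linearly independent, writing $w = au + bv$ with $d = \det(u,v)$ turns the two conditions into $b \in Khd^{-1}$ and $a \in Kgd^{-1}$, producing $m^2$ nonzero vectors and hence $m$ cosets independently of the pair $(\alpha,\beta)$; this contributes the term $m(J - A_C)$, where $A_C = \sum_{g\in C} A_g$, and completes (3). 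Every product of generators is thereby an integer combination of generators, so the span is a matrix algebra and the converse criterion finishes the proof. The only delicate bookkeeping point is tracking the sign in $\det(w,v) = -\det(v,w)$ throughout the coset calculus, which is precisely where the hypothesis $-1 \in K$ is essential.
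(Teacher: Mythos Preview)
Your proof is correct and complete. The paper itself does not prove this proposition at all: it is stated with the citation \cite[Theorem~3]{Reich} and no argument is given (only a remark about a misprint in Reichard's formulas follows). So there is nothing to compare against in the paper; what you have written is a clean, self-contained verification of Reichard's theorem via the matrix-algebra criterion from Section~2.1, with all the case analysis in~(3) carried out explicitly and the role of $-1\in K$ correctly identified.
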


Note that in~\cite[Theorem~3]{Reich}, there is a misprint. Namely, the right-hand sides of the equalities from the last line on p.~223 and the first line on p.~224 should be interchanged. 

Proposition~\ref{scheme} implies the following corollary.

\begin{corl}\label{noncommut}
With the above notation, $\mathcal{X}_0$ is noncommutative if and only if $n\geq 3$.
\end{corl}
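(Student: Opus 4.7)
The plan is to decide commutativity of $\mathcal{X}_0$ by checking which pairs of basic adjacency matrices $\{A_g, B_g : g \in C\}$ commute, using only parts (1)--(3) of Proposition~\ref{scheme}. Part~(1) already shows that the matrices $A_g$, $g \in C$, commute pairwise, so the only pairs that could obstruct commutativity are the mixed pairs $(A_h, B_g)$ and the pairs $(B_h, B_g)$. The task therefore reduces to a simple index calculation in the cyclic (hence abelian) group $C$.

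For the mixed pairs, (2) gives $A_h B_g = B_{h^{-1} g}$ while $B_g A_h = B_{gh}$; since $C$ is abelian these are equal iff $h^{-1} g = h g$, i.e.\ iff $h^2 = e$. For the $BB$ pairs, (3) gives
\[
B_h B_g - B_g B_h \;=\; q\bigl(A_{h^{-1} g} - A_{g^{-1} h}\bigr),
\]
and since the matrices $A_x$ have pairwise disjoint supports they are linearly independent; hence the difference vanishes iff $h^{-1} g = g^{-1} h$, which in the abelian group $C$ is equivalent to $(g h^{-1})^2 = e$. Specializing to $h = e$ already forces $g^2 = e$ for every $g \in C$.

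Combining the two conditions, $\mathcal{X}_0$ is commutative iff every element of $C \cong C_n$ has order at most $2$, which for a cyclic group means $n \leq 2$. Equivalently, $\mathcal{X}_0$ is noncommutative iff $n \geq 3$. In that direction the obstruction may be exhibited concretely: if $h \in C$ has order at least~$3$, then $h \neq h^{-1}$, so by~(2)
\[
A_h B_e \;=\; B_{h^{-1}} \;\neq\; B_h \;=\; B_e A_h.
\]
There is no real obstacle in this argument; the only minor point to watch is applying the abelianness of $C$ correctly when rewriting indices of the form $h^{-1} g$ versus $g h^{-1}$, and observing that the $m(J - A_C)$ terms on the two sides of~(3) cancel in the commutator so that only the $A$-part matters.
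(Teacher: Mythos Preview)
Your argument is correct and is exactly the kind of verification the paper has in mind: it states only that the corollary follows from Proposition~\ref{scheme} (and remarks that $\mathcal{X}_0$ is trivial for $n=1$ and commutative of rank~$4$ for $n=2$), leaving the details implicit. Your computation with the relations in parts~(1)--(3) is the natural way to make this explicit, and the concrete witness $A_hB_e=B_{h^{-1}}\neq B_h=B_eA_h$ for $|h|\ge 3$ is precisely the obstruction one extracts from~(2).
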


If $n=1$, then $\mathcal{X}_0$ is trivial, whereas if $n=2$, then $\mathcal{X}_0$ is a commutative scheme of rank~$4$. 

Let us define the binary operation $\star$ on $S_0$ as follows. If one of the relations $r,s\in S_0$ is of valency~$1$, then put $r\star s=rs$. Otherwise, $r=s_{h}$ and $s=s_{g}$ for some $h,g\in C$. In this case, put $r\star s=r_{h^{-1}g}$. Clearly, $r_e=1_\Omega$ is an identity element with respect to $\star$ and $s\star s^*=s^*\star s=1_{\Omega}$ for every $s\in S$. So Proposition~\ref{scheme} implies the following corollary.

\begin{corl}\label{dih}
The set $S_0$ equipped with a binary operation $\star$ is a group isomorphic to a dihedral group $D_{2n}$.
\end{corl}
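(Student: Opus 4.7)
The plan is to exhibit an explicit bijection $\varphi : S_0 \to D_{2n}$ and to check that it transports the dihedral multiplication to $\star$; since $D_{2n}$ is a group and $\varphi$ is a bijection, this will simultaneously establish associativity, existence of inverses, and the claimed isomorphism. Concretely, fix a presentation $D_{2n}=\langle a,b\mid a^n=b^2=e,\ bab=a^{-1}\rangle$ and identify the cyclic subgroup $\langle a\rangle$ with $C$ via a chosen generator. Define
\[
\varphi(r_g)=g,\qquad \varphi(s_g)=bg\qquad (g\in C).
\]
Since the $r_g$ range over the $n$ ``rotations'' and the $s_g$ over the $n$ ``reflections'' of $D_{2n}$, and $|S_0|=2n$, this is a bijection.

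Next I would verify $\varphi(r\star s)=\varphi(r)\varphi(s)$ in the four possible cases. For two rotations, Proposition~\ref{scheme}(1) gives $r_h\star r_g=r_hr_g=r_{hg}$, matching $h\cdot g=hg$ in $D_{2n}$. For a rotation times a reflection, Proposition~\ref{scheme}(2) and the dihedral relation $hb=bh^{-1}$ yield
\[
\varphi(r_h\star s_g)=\varphi(s_{h^{-1}g})=bh^{-1}g=(hb)g=\varphi(r_h)\varphi(s_g),
\]
and symmetrically $\varphi(s_g\star r_h)=\varphi(s_{gh})=bgh=\varphi(s_g)\varphi(r_h)$. For two reflections, the \emph{definition} of $\star$ gives $s_h\star s_g=r_{h^{-1}g}$, which matches
\[
\varphi(s_h)\varphi(s_g)=(bh)(bg)=b(hb)g=b(bh^{-1})g=h^{-1}g=\varphi(r_{h^{-1}g}).
\]

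Having established that $\varphi$ is a bijection intertwining $\star$ with the dihedral product, the group axioms for $(S_0,\star)$ are inherited from $D_{2n}$: the identity is $\varphi^{-1}(e)=r_e=1_\Omega$, inverses are $(r_g)^{-1}=r_{g^{-1}}$ and $(s_g)^{-1}=s_g$, and associativity is automatic. I do not foresee a real obstacle: the argument is essentially a bookkeeping check, with the only mildly delicate point being the sign conventions in the cases involving one relation of valency $1$ and one of valency $q$, where one must carefully apply both parts of Proposition~\ref{scheme}(2) and use $hb=bh^{-1}$ to reconcile them with $\varphi$.
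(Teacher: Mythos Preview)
Your proof is correct and follows essentially the same approach as the paper, which simply states that the corollary follows from Proposition~\ref{scheme} together with the observations that $r_e=1_\Omega$ is a $\star$-identity and $s\star s^*=1_\Omega$; you have merely spelled out the four-case verification of the dihedral multiplication that the paper leaves implicit.
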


Due to~\cite[Theorem~4]{Reich}, the graph $(\Omega,s_g)$ is a distance-regular antipodal graph of diameter~$3$ with intersection array~$(q,(n-1)m,1;1,m,q)$ for every $g\in G$. Further, we will show that all of these graphs are pairwise isomorphic (see Corollary~\ref{isodrg}). In fact, it is possible to verify that all of them are isomorphic to a graph from~\cite[Theorem~12.5.3]{BCN}.

\subsection{Fusions}

Given $\xi=\sum \limits_{g\in C} a_gg\in \mathbb{Z}C$, put $A_{\xi}=\sum \limits_{g\in C} a_gA_g$ and $B_{\xi}=\sum \limits_{g\in C} a_gB_g$. If $\xi=\underline{X}$ for some $X\subseteq G$, then we write $A_X$ and $B_X$ instead of $A_{\underline{X}}$ and $B_{\underline{X}}$, respectively. Given $X\subseteq G$, put
$$r_X=\bigcup \limits_{g\in X} r_g,~s_X=\bigcup \limits_{g\in X} s_g.$$
Clearly, $A_X=A(r_X)$ and $B_X=A(s_X)$.

Since $S$ is a partition of $\Omega^2$,
\begin{equation}\label{all1sum}
A_C+B_C=\sum \limits_{g\in C} A_g+\sum \limits_{g\in C} B_g=J.
\end{equation}
It follows easily from the definitions of $r_g$ and $r_C$ that
\begin{equation}\label{rc}
r_C=\{(\alpha,\beta)\in \Omega^2:~\langle \alpha,\beta \rangle=0\}.
\end{equation}
Statements~$(1)$ and~$(4)$ of Proposition~\ref{scheme} imply that $r_C$ is a thin radical parabolic of $\mathcal{X}_0$ of valency~$n=|C|$ and 
\begin{equation}\label{thinrad}
\O_{\theta}(S_0)=\{r_g:~g\in C\}\cong C. 
\end{equation}
So
\begin{equation}\label{parab}
A_C=I_{q+1}\otimes J_n.
\end{equation}

\begin{lemm}\label{aux}
With the above notation,
\begin{enumerate}
\tm{1} $A_{\xi}A_{\eta}=A_{\xi\cdot \eta}$,

\tm{2} $A_{\xi}B_{\eta}=B_{\xi^{(-1)}\cdot\eta}$, $B_{\eta}A_{\xi}=B_{\xi\cdot\eta}$,

\tm{3} $B_{\xi}B_{\eta}=qA_{\xi^{(-1)}\cdot \eta}+m\varepsilon(\xi^{(-1)}\cdot \eta)(J-A_C)$,

\end{enumerate}
for all $\xi,\eta \in \mathbb{Z}C$
\end{lemm}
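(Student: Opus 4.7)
The plan is a routine bilinear extension: the three identities of Proposition~\ref{scheme} describe multiplication of the basis matrices $\{A_g, B_g : g\in C\}$, and since the maps $\xi \mapsto A_\xi$ and $\xi \mapsto B_\xi$ are $\mathbb{Z}$-linear in $\xi$ by definition, the lemma follows by expanding in the basis. Concretely, I would write $\xi = \sum_{g\in C} a_g g$, $\eta = \sum_{h\in C} b_h h$ and handle each of (1), (2), (3) separately by distributing the product.

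For (1), expanding gives
\[
A_\xi A_\eta \;=\; \sum_{g,h\in C} a_g b_h\, A_g A_h \;=\; \sum_{g,h\in C} a_g b_h\, A_{gh}
\]
by Proposition~\ref{scheme}(1). Regrouping by $k = gh$, the coefficient of $A_k$ in the right-hand side is $\sum_{gh=k} a_g b_h$, which is exactly the coefficient of $k$ in $\xi \cdot \eta$; so the sum equals $A_{\xi \cdot \eta}$. For (2), the same expansion using the identities $A_g B_h = B_{g^{-1}h}$ and $B_h A_g = B_{hg}$ from Proposition~\ref{scheme}(2) yields $B_\eta A_\xi = B_{\eta\cdot\xi} = B_{\xi\cdot\eta}$ (abelianness of $C$) and $A_\xi B_\eta = B_{\xi^{(-1)}\cdot\eta}$, since grouping by $k = g^{-1}h$ produces the coefficient $\sum_{g^{-1}h = k} a_g b_h$, which is the coefficient of $k$ in $\xi^{(-1)}\cdot \eta$.

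For (3), by Proposition~\ref{scheme}(3),
\[
B_\xi B_\eta \;=\; \sum_{g,h\in C} a_g b_h\, B_g B_h \;=\; q\sum_{g,h} a_g b_h\, A_{g^{-1}h} \;+\; m\Bigl(\sum_{g,h} a_g b_h\Bigr)(J - A_C).
\]
The first summand collapses to $q A_{\xi^{(-1)}\cdot\eta}$ by the same regrouping argument as in (2). The scalar in the second summand is $\varepsilon(\xi)\varepsilon(\eta)$; since $\varepsilon:\mathbb{Z}C \to \mathbb{Z}$ is a ring homomorphism and $\varepsilon(\xi) = \varepsilon(\xi^{(-1)})$, this equals $\varepsilon(\xi^{(-1)}\cdot \eta)$, matching the form stated in the lemma.

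There is no genuine obstacle here; the only thing requiring a moment's care is recognising the identity $\varepsilon(\xi)\varepsilon(\eta) = \varepsilon(\xi^{(-1)}\cdot \eta)$ in part~(3) so that the expression matches the statement. The rest is mechanical $\mathbb{Z}$-linear extension of Proposition~\ref{scheme}.
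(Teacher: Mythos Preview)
Your proposal is correct and follows the same direct bilinear-extension strategy as the paper: expand $\xi,\eta$ in the group basis and invoke the corresponding identity from Proposition~\ref{scheme} termwise. The only cosmetic difference is in part~(3), where the paper factors $B_h=B_eA_h$ to reduce to $B_e^2 A_{\xi^{(-1)}\cdot\eta}$ rather than applying Proposition~\ref{scheme}(3) directly to each $B_gB_h$; both routes arrive at the same expression with equal effort.
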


\begin{proof}
Let us compute the required products. Suppose that $\xi=\sum \limits_{g\in C} a_gg$ and $\eta=\sum \limits_{g\in C} b_gg$, where $a_g,b_g\in \mathbb{Z}$ for every $g\in C$. Then 
$$A_{\xi}A_{\eta}=\left(\sum \limits_{h\in C} a_hA_h\right)\left(\sum \limits_{g\in C} b_gA_g\right)=\sum\limits_{h\in C}\sum\limits_{g\in C} a_hb_gA_hA_g=\sum\limits_{h\in C}\sum\limits_{g\in C} a_hb_gA_{hg}=A_{\xi\cdot \eta},$$
where the the third equality follows from Proposition~\ref{scheme}(1).
Furhter,
$$A_{\xi}B_{\eta}=\left(\sum \limits_{h\in C} a_hA_h\right)\left(\sum \limits_{g\in C} b_gB_g\right)=\sum\limits_{h\in C}\sum\limits_{g\in C} a_hb_gA_hB_g=\sum\limits_{h\in C}\sum\limits_{g\in C} a_hb_gB_{h^{-1}g}=B_{\xi^{(-1)}\cdot\eta},$$
where the third equality holds by the first equality from Proposition~\ref{scheme}(2). The equality $B_{\eta}A_{\xi}=B_{\xi\cdot\eta}$ can be verified in the same way using the second equality from Proposition~\ref{scheme}(2) instead of the first one. Finally,
$$B_{\xi}B_{\eta}=\left(\sum \limits_{g\in C} a_hB_h\right)\left(\sum \limits_{g\in C} b_gB_g\right)=\left(\sum \limits_{g\in C} a_hB_eA_h\right)\left(\sum \limits_{g\in C} b_gB_eA_g\right)=B_eA_{\xi}B_eA_{\eta}=$$
$$=B_e^2A_{\xi^{(-1)}\cdot\eta}=(qA_{e}+m(J-A_C))A_{\xi^{(-1)}\cdot\eta}=qA_{\xi^{(-1)}\cdot \eta}+m\varepsilon(\xi^{(-1)}\cdot \eta)(J-A_C),$$
where the second, fourth, and fifth equalities follow from Proposition~\ref{scheme}(2), Lemma~\ref{aux}(2), and Proposition~\ref{scheme}(3), respectively, whereas the last equality holds because $JA_g=n_{r_g}J=J$ (Proposition~\ref{scheme}(4)) and $A_CA_g=A_{Cg}=A_C$ (Lemma~\ref{aux}(1)) for every $g\in C$.
\end{proof}

Further, we generalize Theorems~$5$ and~$6$ from~\cite{Reich} and show that every $S$-ring $\mathcal{A}$ over a dihedral group of order $2n$ such that a cyclic subgroup of order~$n$ is an $\mathcal{A}$-subgroup induces a scheme which is a fusion of $\mathcal{X}_0$. Let $G=C\rtimes \langle b \rangle$, where $|b|=2$ and $g^b=g^{-1}$ for every $g\in C$. 

\begin{theo}\label{fuse}
Let $\mathcal{A}$ be an $S$-ring over $G$ such that $C$ is an $\mathcal{A}$-subgroup. Then the pair $\mathcal{X}_{\mathcal{A}}=(\Omega,S_{\mathcal{A}})$, where
$$S_{\mathcal{A}}=\{r_X,s_Y:~X,Y\subseteq C,~X,bY\in \mathcal{S}(\mathcal{A})\},$$
is a scheme.
\end{theo}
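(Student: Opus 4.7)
The plan is to apply the scheme-recognition criterion stated at the end of the ``Association schemes'' subsection: a partition $S$ of $\Omega^2$ that contains $1_\Omega$ and is closed under $*$ yields a scheme iff $\Span_{\mathbb{C}}\{A(s):s\in S\}$ is closed under matrix multiplication. So I would check for $S_\mathcal{A}$ that it partitions $\Omega^2$, contains $1_\Omega$, is $*$-closed, and spans an algebra.

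The first three properties are routine. Since $C$ is an $\mathcal{A}$-subgroup, the basic sets of $\mathcal{A}$ split into those contained in $C$ (partitioning $C$) and those contained in $bC$, the latter being exactly the sets $bY$ with the $Y$'s partitioning $C$. Hence $\{r_X\}\cup\{s_Y\}$ is a partition of $r_C\cup s_C=\Omega^2$. The singleton $\{e\}$ is in $\mathcal{S}(\mathcal{A})$, giving $1_\Omega = r_{\{e\}}\in S_\mathcal{A}$. For $*$-closure, Proposition~\ref{scheme}(5) gives $r_X^*=r_{X^{-1}}$ with $X^{-1}\in\mathcal{S}(\mathcal{A})$ by the $S$-ring axiom, and $s_Y^*=s_Y$ is automatic.

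The core of the argument is algebra closure, which I would obtain by combining Lemma~\ref{aux} with the dihedral law $bg=g^{-1}b$. For $x,y\in C$ one computes $x\cdot(by) = b(x^{-1}y)$, $(by)\cdot x = b(yx)$, and $(bx)\cdot(by) = x^{-1}y$, so for $X,X'\in\mathcal{S}(\mathcal{A})_C$ and $bY,bY'\in\mathcal{S}(\mathcal{A})$ the products $\underline{X}\cdot\underline{X'}$, $\underline{X}\cdot\underline{bY}=b\cdot\underline{X^{-1}Y}$, $\underline{bY}\cdot\underline{X}=b\cdot\underline{YX}$, and $\underline{bY}\cdot\underline{bY'}=\underline{Y^{-1}Y'}$ all lie in $\mathcal{A}$. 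Since $C$ is an $\mathcal{A}$-subgroup, $\mathcal{A}\cap\mathbb{Z}C$ is $\mathbb{Z}$-spanned by $\{\underline{X''}:X''\in\mathcal{S}(\mathcal{A})_C\}$ and $\mathcal{A}\cap\mathbb{Z}(bC)$ by the corresponding $\underline{bY''}$'s. Cancelling the prefactor $b$ where it appears, and using that $C$ is abelian so $\underline{YX}=\underline{XY}$, yields $\mathbb{Z}$-expansions of $\underline{X^{-1}Y}$, $\underline{XY}$, and $\underline{Y^{-1}Y'}$ as sums of the $\underline{X''}$'s or $\underline{Y''}$'s. Inserting these into Lemma~\ref{aux}(1)--(3) exhibits $A_XA_{X'}$, $A_XB_Y$, and $B_YA_X$ as $\mathbb{Z}$-linear combinations of $A_{X''}$'s or $B_{Y''}$'s, and hence as elements of the span.

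The main obstacle is that Lemma~\ref{aux}(3) produces the extra summand $m\varepsilon(\underline{Y}^{(-1)}\cdot\underline{Y'})(J-A_C)$ in $B_YB_{Y'}$, which is not a priori a combination of basic matrices (the leading term $qA_{\underline{Y}^{(-1)}\cdot\underline{Y'}}$ is already handled by the analysis above). I would dispose of it via~\eqref{all1sum}: since $J-A_C = B_C = \sum_{Y''}B_{Y''}$ with $Y''$ running through the partition of $C$ induced by the basic sets $bY''\in\mathcal{S}(\mathcal{A})$, this term also lies in the span. Hence the algebra is closed under multiplication, completing the proof that $\mathcal{X}_\mathcal{A}$ is a scheme.
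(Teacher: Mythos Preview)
Your proof is correct and follows essentially the same approach as the paper: verify the three easy axioms, then use Lemma~\ref{aux} together with the dihedral multiplication in $G$ to reduce algebra closure to membership of certain products in $\mathcal{A}$, and handle the $J-A_C$ term via $J-A_C=B_C=\sum_{Y''}B_{Y''}$. The paper phrases the last step through Eq.~\eqref{parab2} instead of Eq.~\eqref{all1sum}, but these are equivalent; your explicit verification of the dihedral identities $x(by)=b(x^{-1}y)$, $(by)x=b(yx)$, $(bx)(by)=x^{-1}y$ is a slightly more detailed version of what the paper compresses into Eq.~\eqref{belong}.
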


\begin{proof}
One can see that $1_{\Omega}=r_e\in S_{\mathcal{A}}$ because $\{e\}\in \mathcal{S}(\mathcal{A})_C$. Proposition~\ref{scheme}(5) implies that 
$$r_{X}^*=\bigcup\limits_{g\in X} r_g^*=\bigcup\limits_{g\in X} r_{g^{-1}}=r_{X^{(-1)}}$$ 
and 
$$s_Y^*=\bigcup\limits_{g\in Y} s_g^*=\bigcup\limits_{g\in Y} s_{g}=s_Y$$ 
for all $X,Y\subseteq C$. Since $X^{(-1)}\in \mathcal{S}(\mathcal{A})_C$ for every $X\in \mathcal{S}(\mathcal{A})_C$, we conclude that $S_{\mathcal{A}}$ is closed under taking the inverse relation. Note that
\begin{equation}\label{parab2}
A_C=\sum \limits_{g\in C} A_g =\sum \limits_{X\in \mathcal{S}(\mathcal{A}_C)} A_X.
\end{equation}

To finish the proof, it remains to verify that the linear span of matrices from the set
$$A(\mathcal{X}_{\mathcal{A}})=\{A_X,B_Y:~X,bY\in \mathcal{S}(\mathcal{A})\}$$
is closed under multiplication. Let $X_1,X_2,Y_1,Y_2\subseteq C$ be such that $X_1,X_2,bY_1,bY_2\in \mathcal{S}(\mathcal{A})$. Then
\begin{equation}\label{belong}
\underline{X_1}\cdot\underline{X_2},b\underline{X_1}^{(-1)}\cdot\underline{Y_1},b\underline{Y_1}\cdot\underline{X_1},\underline{Y_1}^{(-1)}\cdot \underline{Y_2}\in \mathcal{A}.
\end{equation}
Due to Lemma~\ref{aux}, we obtain
$$A_{X_1}A_{X_2}=A_{\underline{X_1}\cdot\underline{X_2}},$$
$$A_{X_1}B_{Y_1}=B_{\underline{X_1}^{(-1)}\cdot\underline{Y_1}},~B_{Y_1}A_{X_1}=B_{\underline{Y_1}\cdot\underline{X_1}},$$
$$B_{Y_1}B_{Y_2}=qA_{\underline{Y_1}^{(-1)}\cdot \underline{Y_2}}+m\varepsilon(\underline{Y_1}^{(-1)}\cdot \underline{Y_2})(J-A_C).$$
Eqs.~\eqref{parab2} and~\eqref{belong} yield that the right-hand side and hence the left-hand side of each of the above equalities are linear combinations of matrices from $A(\mathcal{X}_{\mathcal{A}})$ as required.
\end{proof}

If $\mathcal{A}$ is an $S$-ring of rank~$4$ constructed from a DS in a cyclic group, then $\mathcal{X}_{\mathcal{A}}$ is a scheme from~\cite[Theorem~5]{Reich}, whereas if $\mathcal{A}$ has a basic set of size~$1$ outside $C$, then $\mathcal{X}_{\mathcal{A}}$ is a scheme from~\cite[Theorem~6]{Reich}.

If $n$ is a prime such that $n\equiv 3 \mod~4$ and $C_1$ and $C_2$ are nontrivial orbits of the subgroup of $\aut(C)$ of index~$2$, then the partition of $G$ into the sets 
$$\{e\},~C_1,~C_2,~\{b\},~bC_1,~bC_2$$
defines a noncommutative $S$-ring $\mathcal{A}$ of rank~$6$ over $G$. The scheme $\mathcal{X}_{\mathcal{A}}$ is a noncommutative scheme of rank~$6$. Such schemes were described in~\cite[Section~7]{Reich} and earlier in a special case in~\cite{DF}.

\subsection{Isomorphisms}

In this subsection, we find the automorphism and isomorphism groups of noncommutative Tatra schemes. Due to Corollary~\ref{noncommut}, a noncommutativity of $\mathcal{X}_0$ is equivalent to
\begin{equation}\label{maineq2}
n\geq 3.
\end{equation}

Recall that $\Sigma=\aut(\mathbb{F})$ is a cyclic group of order~$d$ generated by a Frobenius automorphism $x\mapsto x^r$, $x\in \mathbb{F}$, $\GaL(2,q)=\GL(2,q)\rtimes \Sigma$, and $\PGaL(2,q)=\PGL(2,q)\rtimes \Sigma$. 

The group $\GaL(2,q)$ acts on $\Omega$ as follows: if $T\sigma\in \GaL(2,q)$, where $T\in \GL(2,q)$ and $\sigma\in \Sigma$, and $Kv\in \Omega$, then
$$(Kv)^{T\sigma}=K(Tv^\sigma),$$
where $\sigma$ acts on each component of~$v$. The kernel of this action is the group $\widetilde{K}=\{xI_2:~x\in K\}\cong K$ of order $m=|K|$. Further, if $\alpha=Kv\in \Omega$, then we denote $K(Tv^\sigma)$ by $T\alpha^\sigma$. 

The group $\Sigma$ induces the action by automorphisms on $C=\mathbb{F}^*/K$ in the following way: if $\sigma \in \Sigma$, then
$$(Kx)^\sigma=K(x^\sigma).$$ 
The kernel of the action of $\Sigma$ on $C$ is denoted by $\Sigma_0$. Observe that $\Sigma_0$ can act nontrivially on $\Omega$. For example, if $d\geq 2$ and $|K|=(q-1)/(r-1)$, then $x^r=x^{r-1}x\in Kx$ for every $x\in \mathbb{F}_q^*$ and hence $\Sigma_0=\Sigma$. On the other hand, $K(1^\sigma,x^r)=K(1,x^r)\neq K(1,x)$ for a generator $x$ of $\mathbb{F}^*$ which implies that $\Sigma$ acts nontrivially on $\Omega$. 

One can see that $K(1,x)\neq K(1^\sigma,x^\sigma)=K(1,x^\sigma)$ for a generator $x$ of $\mathbb{F}^*$ and every nontrivial $\sigma\in \Sigma$. Therefore the kernel of the action of $\Sigma$ on $\Omega$ is trivial and consequently
\begin{equation}\label{fieldker}
\Sigma^\Omega\cong \Sigma.
\end{equation}

Let 
$$\mathcal{L}=\{C\alpha:~\alpha\in \Omega\}.$$ 
Clearly, every element $\beta$ of $C\alpha$ can be uniquely presented in the form $\beta=g\alpha$, $g\in C$, and hence $|C\alpha|=|C|=n$ and $|\mathcal{L}|=|\Omega|/|C|=q+1$. One can see that the elements of $\mathcal{L}$ are in one-to-one correspondence with $1$-dimensional subspaces of $V$ and hence $\PGaL(2,q)$ induces the action on $\mathcal{L}$.

Let 
$$\GL(2,q)_K=\{T\in \GL(2,q):~\det(T)\in K\}\leq \GL(2,q).$$
Clearly, $\GL(2,q)_K$ induces the action on $\Omega$ and the kernel of this action is $\widetilde{K}$. Observe that 
$|\GL(2,q)_K|=|\SL(2,q)||K|$ and hence 
$$|\GL(2,q)_K^\Omega|=|\SL(2,q)|=|\PGL(2,q)|.$$
The kernel $N_K$ of the action of $\GL(2,q)_K$ on $\mathcal{L}$ is the group consisting of all scalar $(2\times 2)$-matrices whose determinant belongs to $K$. If $n=|\mathbb{F}^*:K|$ is odd, then $N_K=\widetilde{K}$, whereas otherwise $|N_K:\widetilde{K}|=2$. Therefore 
\begin{equation}\label{isopgl}
\GL(2,q)_K^\Omega\cong \PGL(2,q)
\end{equation}
if $n$ is odd and $\GL(2,q)_K^\Omega/N_K^\Omega\cong \PSL(2,q)$ if $n$ is even. In the latter case, $N_K^\Omega\cong N_K/\widetilde{K}\cong C_2\leq Z(\GL(2,q)_K^\Omega)$ and $|\SL(2,q)^\Omega\cap N_K^\Omega|=1$. This yields that
$$\GL(2,q)_K^\Omega\cong \PSL(2,q)\times C_2$$
if $n$ is even.

One can see that $\det(T^\sigma)=\det(T)^\sigma$ for every $T\in \GL(2,q)$ and $\sigma\in \Sigma$, where $T^\sigma$ is the matrix obtained from $T$ by the action of $\sigma$ on each entry of $T$. So every subgroup of $\Sigma$ acts, in particular, $\Sigma_0$ acts on $\GL(2,q)_K$ and the semidirect product $\GL(2,q)_K\rtimes \Sigma_0$ is well-defined.

The main result of the subsection is the theorem below.

\begin{theo}\label{mainautiso}
With the above notation, $\aut(\mathcal{X}_0)=(\GL(2,q)_K\rtimes \Sigma_0)^\Omega$ and $\iso(\mathcal{X}_0)=\GaL(2,q)^{\Omega}$.
\end{theo}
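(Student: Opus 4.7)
My plan is to prove both equalities by establishing the easy inclusions first and then the reverse. For the easy direction, I will compute directly: for $T\sigma \in \GaL(2,q)$ acting on $\Omega$, the identity $\det(Tu^\sigma,Tv^\sigma) = \det(T)\det(u,v)^\sigma$ gives
$$
\langle (Ku)^{T\sigma}, (Kv)^{T\sigma}\rangle = \det(T)\cdot\langle Ku,Kv\rangle^\sigma
$$
in $C$, together with $(g\alpha)^{T\sigma} = g^\sigma \alpha^{T\sigma}$; hence $T\sigma$ sends $s_g \mapsto s_{\det(T)g^\sigma}$ and $r_g \mapsto r_{g^\sigma}$. This yields $\GaL(2,q)^\Omega \leq \iso(\mathcal{X}_0)$, and restricting to $\det(T)\in K$ and $\sigma \in \Sigma_0$ fixes each basic relation, giving $(\GL(2,q)_K \rtimes \Sigma_0)^\Omega \leq \aut(\mathcal{X}_0)$.

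For the converse, let $f \in \iso(\mathcal{X}_0)$ and let $\pi',\pi\colon C\to C$ be the bijections defined by $f(r_g) = r_{\pi'(g)}$ and $f(s_g) = s_{\pi(g)}$. Applying $f$ to the structure equations of Proposition~\ref{scheme}(1)-(2) will force $\pi'$ to be a group automorphism of $C$ and $\pi(g) = \pi'(g)\cdot c$, where $c := \pi(e)\in C$. Since $r_C$ is a parabolic, $f$ permutes the fibers $\mathcal{L}$, and $\mathcal{L}$ identifies naturally with $PG(1,q)$ via $C\alpha \leftrightarrow \mathbb{F}\alpha$. My next step is to introduce the $C$-valued cross-ratio
$$
\mathrm{cr}(\Lambda_1,\Lambda_2,\Lambda_3,\Lambda_4) = \langle\alpha_1,\alpha_3\rangle\langle\alpha_2,\alpha_4\rangle\langle\alpha_1,\alpha_4\rangle^{-1}\langle\alpha_2,\alpha_3\rangle^{-1}\in C
$$
for pairwise distinct fibers $\Lambda_i$ and arbitrary $\alpha_i\in\Lambda_i$; well-definedness is immediate since rescaling within a fiber introduces matching factors in numerator and denominator. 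A direct computation shows $\mathrm{cr}(f\Lambda_1,\ldots,f\Lambda_4) = \pi'(\mathrm{cr}(\Lambda_1,\ldots,\Lambda_4))$, so $f^\mathcal{L}$ is a permutation of $PG(1,q)$ preserving the classical cross-ratio modulo $K$ up to the group automorphism $\pi'$ of $C$. I will then argue, using sharp $3$-transitivity of $\PGL(2,q)$ to normalize $f^\mathcal{L}$ so that it fixes three standard points of $PG(1,q)$ and then exploiting the constraints from all remaining cross-ratios, that $f^\mathcal{L} \in \PGaL(2,q)^\mathcal{L}$ and that $\pi'$ is induced by some $\sigma\in\Sigma$.

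Given this, I will pick $h\in\GaL(2,q)^\Omega$ with $h^\mathcal{L} = f^\mathcal{L}$ (from Step 1) and study $f' := h^{-1}f$, which stabilizes every fiber setwise. By \eqref{thinrad0}, on each fiber $\Lambda$ the restriction $f'|_\Lambda$ is multiplication by some $\lambda_\Lambda\in C$; for distinct $\Lambda,\Lambda'$ and $(\alpha,\beta)\in s_g$ with $\alpha\in\Lambda$, $\beta\in\Lambda'$, preservation of the $s_g$-labels yields $\lambda_\Lambda\lambda_{\Lambda'}g = \pi_{f'}(g)$ for every $g\in C$. Hence $\pi_{f'}(g)/g$ is a constant $c'\in C$ independent of $g$, which combined with $\pi_{f'} = \pi'_{f'}\cdot c_{f'}$ and $\pi'_{f'}$ being a group automorphism forces $\pi'_{f'} = \id$; comparing three distinct fibers then gives $\lambda_\Lambda$ equal to a common $\lambda_0\in C$ with $\lambda_0^2 = c'$. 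Therefore $f'$ is multiplication by any lift of $\lambda_0$ in $\mathbb{F}^*$, so $f' \in \GL(2,q)^\Omega$ and $f = hf'\in\GaL(2,q)^\Omega$. Combined with Step~1 this gives $\iso(\mathcal{X}_0) = \GaL(2,q)^\Omega$; imposing $\pi'_f=\id$ and $c_f=e$ (equivalently $\sigma\in\Sigma_0$ and $\det T\in K$) then yields $\aut(\mathcal{X}_0) = (\GL(2,q)_K\rtimes\Sigma_0)^\Omega$. The main obstacle is the cross-ratio step: the cross-ratio is only $C$-valued, hence weaker than the classical $\mathbb{F}$-valued invariant, so adapting the usual Fundamental Theorem of Projective Geometry for the line to pin down $f^\mathcal{L}$ as an element of $\PGaL(2,q)$ is where the real work lies.
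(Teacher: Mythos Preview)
Your approach diverges from the paper's in the hard direction. The paper bounds $\aut(\mathcal{X}_0)^{\mathcal{L}}$ and $\iso(\mathcal{X}_0)^{\mathcal{L}}$ inside $\PGaL(2,q)$ by invoking the classification of $2$-transitive permutation groups containing $\PSL(2,q)$ (Lemma~\ref{overgroup}), together with an ad hoc argument ruling out $A_{q+1}$ and $S_{q+1}$ (Lemma~\ref{notas}) and a computer check for $q\leq 23$; it then handles the kernel on $\mathcal{L}$ separately (Lemma~\ref{autker} and the semiregularity argument in Proposition~\ref{isotatra}). You instead propose to pin down $f^{\mathcal{L}}$ directly via a $C$-valued cross-ratio. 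If it can be made to work, your route is more self-contained and avoids both the classification and the computer verification; the paper's route trades conceptual cleanliness for the certainty of a standard tool.

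The genuine gap is precisely the one you flag. After normalizing $\theta=f^{\mathcal{L}}$ to fix $0,1,\infty$, your cross-ratio constraint on all $4$-tuples through $\infty$ becomes $K(\theta(x)-\theta(y))=\pi'(K(x-y))$ for all $x,y\in\mathbb{F}$; that is, $\theta$ is an algebraic isomorphism of the cyclotomic association scheme of index $n$ over $\mathbb{F}_q$ whose induced class permutation is $\pi'$. For the $\aut$ case ($\pi'=\id$) this is exactly the hypothesis of the Carlitz--McConnel theorem, which yields $\theta(x)=ax^{r^j}$ with $a\in K$ and $n\mid r^j-1$, so that half can be completed by citation. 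For the $\iso$ case you must further show that every $\pi'\in\aut(C)$ realizable by such a $\theta$ actually lies in $\Sigma^C$; this is not covered by Carlitz--McConnel and needs an additional argument (one option: once $\aut(\mathcal{X}_0)$ is known, bound $\iso(\mathcal{X}_0)$ via its normalizer). A minor point on the fiber step: your appeal to \eqref{thinrad0} to get that $f'|_\Lambda$ is pure multiplication by some $\lambda_\Lambda\in C$ presupposes $\pi'_{f'}=\id$; a priori $f'|_\Lambda$ only lies in $\Hol(C)$, so you need the cross-ratio step to have already identified $\pi'_f$ with some $\sigma^C$ so that $h$ can be chosen to cancel it --- otherwise the argument becomes circular where you later ``force $\pi'_{f'}=\id$''.
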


Observe that Theorem~\ref{mainautiso} does not hold if $\mathcal{X}_0$ is commutative or, equivalently, $n\leq 2$. If $n=1$, then $\mathcal{X}_0$ is trivial and hence $\aut(\mathcal{X}_0)=\sym(\Omega)$. It would be interesting to find the automorphism and isomorphism groups of $\mathcal{X}_0$ in case $n=2$. It seems that in this case the above groups can be large.

The proof of Theorem~\ref{mainautiso} will be given in the end of the section. We start with several preparatory lemmas. 

\begin{lemm}\label{neigh}
Let $g\in C$, $\Lambda_0\in \mathcal{L}$, and $\alpha_0\in \Lambda_0$. For every $\Lambda\in \mathcal{L}$ with $\Lambda\neq \Lambda_0$, there exists a unique $\alpha\in \Lambda$ such that $(\alpha_0,\alpha)\in s_g$. 
\end{lemm}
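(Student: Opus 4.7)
The plan is to unwind the definitions of $\mathcal{L}$ and of $s_g$ and to parametrize the points of $\Lambda$ by the group $C$, and then observe that the condition $(\alpha_0,\alpha)\in s_g$ translates into a single linear equation in $C$ whose coefficient is nonzero precisely because $\Lambda\neq\Lambda_0$.

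Concretely, write $\alpha_0=Ku$ and fix a representative $v\in V\setminus\{0\}$ with $\Lambda=C(Kv)$. Every element $\alpha\in\Lambda$ has the form $\alpha=K(xv)$ for some $x\in\mathbb{F}^*$, and two choices $x,x'$ give the same element of $\Omega$ if and only if $Kx=Kx'$ in $C=\mathbb{F}^*/K$. So the map $Kx\mapsto K(xv)$ is a bijection from $C$ onto $\Lambda$, confirming $|\Lambda|=n$ along the way. Now by bilinearity of the determinant,
\[
\langle\alpha_0,K(xv)\rangle=K\det(u,xv)=K\bigl(x\det(u,v)\bigr)=Kx\cdot K\det(u,v),
\]
where the product on the right is taken in $C\cup\{0\}$.

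Set $h_0=K\det(u,v)$. Since $\Lambda\neq\Lambda_0$, the vectors $u$ and $v$ are linearly independent over $\mathbb{F}$, so $\det(u,v)\neq 0$ and $h_0\in C$ (not $0$). The condition $(\alpha_0,\alpha)\in s_g$ is by definition $\langle\alpha_0,\alpha\rangle=g$, which in the parametrization above becomes
\[
Kx\cdot h_0=g\quad\text{in }C.
\]
Because $h_0\in C$ is invertible, this equation has the unique solution $Kx=gh_0^{-1}$ in $C$, and this $Kx$ corresponds under the bijection to a unique element $\alpha\in\Lambda$. This gives existence and uniqueness simultaneously.

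There is essentially no obstacle to this argument; the only thing to check carefully is the well-definedness of the parametrization (which uses that $K$ is a subgroup and that $\langle\cdot,\cdot\rangle$ is well-defined on $K$-classes, already noted in the excerpt via \cite[Lemma~3]{Reich}). So the proof is basically a two-line computation after setting up coordinates on $\Lambda$.
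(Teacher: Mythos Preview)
Your proof is correct and follows essentially the same approach as the paper: both parametrize $\Lambda$ by $C$ (you via $Kx\mapsto K(xv)$, the paper via $h'\mapsto h'\alpha_1$ for a fixed $\alpha_1\in\Lambda$), use the multiplicativity $\langle\alpha_0,g\alpha_1\rangle=g\langle\alpha_0,\alpha_1\rangle$, and solve the resulting group equation $Kx\cdot h_0=g$ (resp.\ $h'h=g$) in $C$. The only cosmetic difference is that you unpack to field representatives while the paper stays at the level of the $C$-action on $\Omega$.
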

\begin{proof}
At first, let prove an existence. Let $\alpha_1\in \Lambda$. Suppose that $\langle \alpha_0,\alpha_1\rangle=h$. Then $\langle \alpha_0,h^{-1}g\alpha_1\rangle=h^{-1}g\langle \alpha_0,\alpha\rangle=h^{-1}gh=g$. So $(\alpha_0,h^{-1}g\alpha_1)\in s_g$. Clearly, $h^{-1}g\alpha_1\in \Lambda$. Thus, the required holds for $\alpha=h^{-1}g\alpha_1$.

Now let us prove a uniqueness. Assume that $(\alpha_0,\beta)\in s_g$ for some $\beta\in \Lambda$. Then $\langle \alpha_0,\beta\rangle=g$. As $\beta\in \Lambda$, we have $\beta=h^\prime \alpha_1$ for some $h^\prime\in C$. Therefore 
$$g=\langle \alpha_0,\beta\rangle=\langle \alpha_0,h^\prime \alpha_1\rangle=h^\prime\langle \alpha_0,\alpha_1\rangle=h^\prime h.$$
The above equality implies that $h^\prime=h^{-1}g$ and hence $\beta=h^\prime \alpha_1=h^{-1}g\alpha_1=\alpha$ as desired. 
\end{proof}

\begin{lemm}\label{isoscheme}
Let $f\in \sym(\Omega)$ such that $r_C^f=r_C$. Then the following statements hold:
\begin{enumerate}

\tm{1} $\Lambda^f\in \mathcal{L}$ for every $\Lambda\in \mathcal{L}$ and hence $f$ induces the permutation $f^{\mathcal{L}}\in \sym(\mathcal{L})$;

\tm{2} for every $\Lambda\in \mathcal{L}$ and every $\alpha\in \Lambda$, the mapping $f_{\Lambda,\alpha}$ from $C$ to itself which maps $g\in C$ to $g^\prime\in C$ such that $(g\alpha)^f=g^\prime\alpha^f$ is a well-defined permutation on~$C$.
\end{enumerate}
\end{lemm}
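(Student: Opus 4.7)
The plan is to first identify the set $\mathcal{L}$ with the set of equivalence classes of the relation $r_C$. By the description in Eq.~\eqref{rc}, we have $(\alpha,\beta)\in r_C$ iff $\langle\alpha,\beta\rangle = 0$, which (by the remark right after the definition of the form) is equivalent to $\beta \in C\alpha$. Combined with Proposition~\ref{scheme}(1) and the fact that $r_C$ is a parabolic noted after Eq.~\eqref{rc}, this shows that $r_C$ is an equivalence relation whose classes are precisely the fibres $C\alpha$, i.e.\ the elements of $\mathcal{L}$.

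Part~(1) then follows immediately: since $f$ preserves $r_C$ as a binary relation, it carries each equivalence class of $r_C$ bijectively onto another equivalence class of $r_C$. Hence $\Lambda^f \in \mathcal{L}$ for every $\Lambda\in\mathcal{L}$, and the induced map $f^{\mathcal{L}}\in\sym(\mathcal{L})$ is well-defined.

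For part~(2), fix $\Lambda\in\mathcal{L}$ and $\alpha\in\Lambda$. The key preliminary observation is that the evaluation map $g\mapsto g\alpha$ is a bijection between $C$ and $\Lambda$: surjectivity is the definition of $C\alpha$, and a cardinality count ($|\Lambda|=n=|C|$, as noted right after the introduction of $\mathcal{L}$) upgrades this to a bijection. Applying part~(1), set $\Lambda' = \Lambda^f$, which contains $\alpha^f$, and use the same evaluation bijection for $\Lambda'$ and $\alpha^f$: for each $g\in C$ the element $(g\alpha)^f$ lies in $\Lambda'$, so there is a unique $g'\in C$ with $(g\alpha)^f = g'\alpha^f$. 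Defining $f_{\Lambda,\alpha}(g)=g'$ thus yields a well-defined map $C\to C$.

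Finally, I would check that $f_{\Lambda,\alpha}$ is a bijection: if $f_{\Lambda,\alpha}(g_1)=f_{\Lambda,\alpha}(g_2)$, then $(g_1\alpha)^f = (g_2\alpha)^f$, and since $f\in\sym(\Omega)$ this forces $g_1\alpha = g_2\alpha$, whence $g_1=g_2$ by the evaluation bijection for $\Lambda$; injectivity on the finite set $C$ gives the desired permutation. There is no substantive obstacle here: the content of the lemma is essentially book-keeping, designed to introduce the permutations $f^{\mathcal{L}}$ and $f_{\Lambda,\alpha}$ as tools for the forthcoming identification of $\iso(\mathcal{X}_0)$ with $\AGaL(2,q)^{\Omega}$.
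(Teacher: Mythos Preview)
Your proof is correct and follows essentially the same route as the paper: identify $\mathcal{L}$ with the equivalence classes of the parabolic $r_C$, use $r_C^f=r_C$ to conclude that $f$ permutes these classes, and then use the bijection $g\mapsto g\alpha$ between $C$ and $\Lambda$ (and likewise between $C$ and $\Lambda^f$ via $\alpha^f$) to see that $f_{\Lambda,\alpha}$ is a well-defined permutation. One small slip in your closing remark: the paper identifies $\iso(\mathcal{X}_0)$ with $\GaL(2,q)^\Omega$, not $\AGaL(2,q)^\Omega$.
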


\begin{proof}
To prove Statement~$(1)$, it is enough to verify that if $\alpha,\beta\in \Omega$ are such that $\beta=g\alpha$ for some $g\in C$, then $\beta^f=g^\prime\alpha^f$ for some $g^\prime\in C$. Since $\beta=g\alpha$, we have $\langle \alpha,\beta \rangle=0$. So $(\alpha,\beta)\in r_C$ by Eq.~\eqref{rc}. The assumption of the lemma implies that $(\alpha^f,\beta^f)\in r_C$ and hence $\langle \alpha^f,\beta^f \rangle=0$ by Eq.~\eqref{rc}. Therefore $\beta^f=g^\prime\alpha^f$ for some $g^\prime\in C$ as desired.

Let $\Lambda\in \mathcal{L}$ and $\alpha\in \Lambda$. Statement~$(1)$ implies that
$$\{(g\alpha)^f:~g\in C\}=\Lambda^f=\{g^\prime\alpha^f:~g^\prime\in C\}.$$
Since every element of $\Lambda^f$ can be uniquely presented in the form $(g\alpha)^f$, $g\in C$, as well as in the form $g^\prime\alpha^f$, $g^\prime\in C$, the mapping $f_{\Lambda,\alpha}$ defined in Statement~$(2)$ of the lemma is a permutation on~$C$.
\end{proof}

\begin{corl}\label{actonl}
With the above notation,  $\mathcal{L}$ is an imprimitivity system for $\aut(\mathcal{X}_0)$.
\end{corl}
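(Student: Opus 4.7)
The plan is to observe that this is essentially an immediate consequence of Lemma~\ref{isoscheme}(1), together with the fact that $r_C$ is a parabolic of $\mathcal{X}_0$.

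First I would note that, by definition, $\mathcal{L}$ is a partition of $\Omega$: each $\alpha\in\Omega$ lies in a unique class $C\alpha$, and these classes are precisely the equivalence classes of the equivalence relation $r_C$ (already noted above to be the thin radical parabolic of $\mathcal{X}_0$, via Eq.~\eqref{rc} and Proposition~\ref{scheme}(1),(4)). Hence it only remains to show that every $f\in\aut(\mathcal{X}_0)$ preserves this partition setwise.

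Let $f\in\aut(\mathcal{X}_0)$. By definition of $\aut(\mathcal{X}_0)$, we have $r_g^f=r_g$ for every basic relation $r_g$, $g\in C$. Taking the union over $g\in C$ yields $r_C^f=r_C$. Now Lemma~\ref{isoscheme}(1) applies and gives $\Lambda^f\in\mathcal{L}$ for every $\Lambda\in\mathcal{L}$. Therefore $\mathcal{L}$ is an $\aut(\mathcal{X}_0)$-invariant partition of $\Omega$, i.e.\ an imprimitivity system for $\aut(\mathcal{X}_0)$, as required.

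There is no real obstacle here: the conceptual content was already absorbed into Lemma~\ref{isoscheme}(1), and the corollary is a one-line application. The only thing to be careful about is to invoke the correct hypothesis of Lemma~\ref{isoscheme}(1), namely $r_C^f=r_C$, which for $f\in\aut(\mathcal{X}_0)$ follows trivially since $f$ fixes each basic relation $r_g$ individually.
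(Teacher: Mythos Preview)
Your proof is correct and follows essentially the same approach as the paper: observe that $r_C$ is a union of basic relations of $\mathcal{X}_0$, hence $r_C^f=r_C$ for every $f\in\aut(\mathcal{X}_0)$, and then apply Lemma~\ref{isoscheme}(1). The only cosmetic difference is that the paper phrases the first step as ``$r_C$ is a union of some basic relations'' rather than fixing each $r_g$ individually, but this is the same observation.
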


\begin{proof}
Since $r_C$ is a union of some basic relations of $\mathcal{X}_0$, we have $r_C^f=r_C$ for every $f\in \aut(\mathcal{X}_0)$. So $\mathcal{L}$ is an imprimitivity system for $\aut(\mathcal{X}_0)$ by Statement~$(1)$ of Lemma~\ref{isoscheme}.
\end{proof}

As $n_{r_g}=1$ for every $g\in C$ (Proposition~\ref{scheme}(4)), $r_g$ is a bijection on $\Omega$ which maps $\alpha\in \Omega$ to $g\alpha\in \Omega$. Clearly, $r_g\in Z(\GL(2,q))^\Omega$.

\begin{lemm}\label{autker}
Let $N_0$ be the kernel of the action of $\aut(\mathcal{X}_0)$ on $\mathcal{L}$. Then $N_0$ is trivial if $n$ is odd and $N_0=\langle r_h \rangle$, where $h\in C$ with $|h|=2$, if $n$ is even. In particular, $|N_0|\leq 2$.
\end{lemm}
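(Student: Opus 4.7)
The plan is to show that any $f\in N_0$ must act on each line $\Lambda\in\mathcal{L}$ as multiplication by some scalar $h_\Lambda\in C$, and then to use the form $\langle\cdot,\cdot\rangle$ to force all these scalars to coincide and to square to~$e$.

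First I would take $f\in N_0$, fix $\Lambda\in\mathcal{L}$ and $\alpha\in\Lambda$, and observe that $\alpha^f=h_\Lambda\alpha$ for a unique $h_\Lambda\in C$ since $f$ stabilizes $\Lambda$ setwise. Because $f$ preserves every basic relation $r_g$ (by definition of $\aut(\mathcal{X}_0)$) and $(\alpha,g\alpha)\in r_g$, one gets $(g\alpha)^f=g\alpha^f=h_\Lambda(g\alpha)$ for every $g\in C$; hence $f|_\Lambda$ is global multiplication by $h_\Lambda$, and this scalar does not depend on the choice of base point. This is essentially a direct application of Lemma~\ref{isoscheme}(2) combined with $r_g$-preservation.

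Next I would relate the scalars across different lines by invoking preservation of the relations $s_g$. For distinct $\Lambda_0,\Lambda\in\mathcal{L}$ and any $\alpha_0\in\Lambda_0$, Lemma~\ref{neigh} supplies, for each $g\in C$, a unique $\alpha\in\Lambda$ with $\langle\alpha_0,\alpha\rangle=g$. The definition of the form yields the bihomogeneity identity $\langle k\beta,\ell\gamma\rangle=k\ell\langle\beta,\gamma\rangle$ for $k,\ell\in C$, so applying $f$ to the pair $(\alpha_0,\alpha)\in s_g$ gives
\[
g=\langle\alpha_0^f,\alpha^f\rangle=\langle h_{\Lambda_0}\alpha_0,h_\Lambda\alpha\rangle=h_{\Lambda_0}h_\Lambda\cdot g,
\]
whence $h_{\Lambda_0}h_\Lambda=e$ for every pair of distinct lines.

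Since $|\mathcal{L}|=q+1\geq 3$, picking three distinct lines and combining the resulting relations pairwise forces $h_{\Lambda_0}^2=e$ and $h_\Lambda=h_{\Lambda_0}=:h$ for every $\Lambda$, so $f$ is global multiplication by an element $h\in C$ with $h^2=e$. If $n$ is odd then $C$ has no involution, so $h=e$ and $N_0$ is trivial. If $n$ is even then $C$ has a unique involution~$h$; the corresponding multiplication coincides with the permutation induced by $r_h\in\O_{\theta}(S_0)$, and a quick check using the same bihomogeneity (the form is multiplied by $h^2=e$) confirms that this permutation actually lies in $\aut(\mathcal{X}_0)$, giving $N_0=\langle r_h\rangle$ of order~$2$. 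I expect the only step requiring any real care to be this last verification that $r_h$ genuinely belongs to $\aut(\mathcal{X}_0)$ in the even case, but it is immediate from the form's $C$-equivariance.
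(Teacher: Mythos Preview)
Your argument is correct and follows the same high-level outline as the paper's proof: first show that any $f\in N_0$ restricts to each block $\Lambda$ as multiplication by some $h_\Lambda\in C$, then use the $s_g$-relations to force $h_{\Lambda_0}h_\Lambda=e$ for distinct blocks, and finally deduce $h_\Lambda^2=e$ from having at least three blocks. The verification that $r_h\in\aut(\mathcal{X}_0)$ when $n$ is even is handled identically.

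Where you differ from the paper is in the execution of both steps, and in each case your route is the more direct one. For the first step, the paper argues via centralizers: since $f$ commutes with the regular cyclic action $\langle r_{g_0}^\Lambda\rangle$ on $\Lambda$, it must lie in that cyclic group. You instead read off $(g\alpha)^f=g\alpha^f$ immediately from $r_g$-preservation, which is shorter and avoids the separate faithfulness argument the paper needs. For the second step, the paper phrases the cross-block relation in terms of the matching $s=s_g\cap(\Lambda\times\Lambda')$ and the conjugation identity $s^{-1}r_{g_0}^\Lambda s=(r_{g_0}^{\Lambda'})^{-1}$, obtaining $j_{\Lambda'}=-j_\Lambda$; you get the same equation $h_{\Lambda_0}h_\Lambda=e$ in one line from the bihomogeneity $\langle k\beta,\ell\gamma\rangle=k\ell\langle\beta,\gamma\rangle$. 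The paper's phrasing has the mild advantage of staying entirely at the level of relations and matchings (never unpacking the form), but your use of bihomogeneity is transparent and arguably what is ``really'' going on underneath the conjugation identity.
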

 
\begin{proof}
Let $\Lambda\in \mathcal{L}$ and $g_0$ a generator of $C$. Clearly, $r_{g_0}$ induces a permutation $r_{g_0}^\Lambda$ on $\Lambda$ which maps every $\alpha\in \Lambda$ to $g_0\alpha\in \Lambda$. Since $g_0$ is a generator of $C$, the group $\langle r_{g_0}^\Lambda \rangle\cong C$ is regular on $\Lambda$. This implies that $C_{\sym(\Lambda)}(\langle r_{g_0}^\Lambda \rangle)=\langle r_{g_0}^\Lambda \rangle$. One can see that $f^{-1}r_{g_0}f=r_{g_0}^f=r_{g_0}$ for every $f\in N_0\leq \aut(\mathcal{X}_0)$. So
\begin{equation}\label{centralizer}
N_0^\Lambda\leq C_{\sym(\Lambda)}(\langle r_{g_0}^\Lambda \rangle)=\langle r_{g_0}^\Lambda \rangle 
\end{equation}
for every $\Lambda\in \mathcal{L}$. 

Suppose that $f\in N_0$ acts trivially on some $\Lambda_0\in \mathcal{L}$. From Lemma~\ref{neigh} it follows that for every $\alpha\in \Omega\setminus \Lambda_0$ there exists $\alpha_0\in \Lambda_0$ and $g\in C$ such that $\{\alpha\}=\alpha_0s_g \cap \Lambda$, where $\Lambda\in \mathcal{L}$ contains $\alpha$. Therefore 
$$\{\alpha^f\}=\alpha_0^fs_g^f \cap \Lambda^f=\alpha_0s_g \cap \Lambda=\{\alpha\}.$$
This yields that $f$ is trivial and consequently the action of $N_0$ on $\Lambda_0$ is faithful. Thus, $N_0^\Lambda\cong N_0^{\Lambda_0}$ for every $\Lambda\in \mathcal{L}$. Together with Eq.~\eqref{centralizer}, this implies that $N_0$ is cyclic and a generator $f_0$ of $N_0$ can be presented in the form
$$f_0=\prod \limits_{\Lambda\in \mathcal{L}} (r_{g_0}^\Lambda)^{j_\Lambda}$$
for some $j_\Lambda\in \mathbb{Z}_n$.

Let $\Lambda,\Lambda^\prime\in \mathcal{L}$ and $g\in C$. Then $s=s_g\cap (\Lambda\times \Lambda^\prime)\neq\varnothing$ and $s$ is a matching between $\Lambda$ and $\Lambda^\prime$ by Lemma~\ref{neigh}. Since $s_g^{f_0}=s_g$, we conclude that $(r_{g_0}^\Lambda)^{j_\Lambda}s(r_{g_0}^{\Lambda^\prime})^{-j_{\Lambda^\prime}}=s$ and hence
\begin{equation}\label{autker1}
s^{-1}(r_{g_0}^\Lambda)^{j_\Lambda}s=(r_{g_0}^{\Lambda^\prime})^{j_{\Lambda^\prime}}.
\end{equation}
On the other hand, $r_{g_0}s_gr_{g_0}=s_g$ by Proposition~\ref{scheme} which implies that $(r_{g_0}^\Lambda)s(r_{g_0}^{\Lambda^\prime})=s$ and consequently
\begin{equation}\label{autker2}
s^{-1}(r_{g_0}^\Lambda)s=(r_{g_0}^{\Lambda^\prime})^{-1}.
\end{equation}
From Eqs.~\eqref{autker1} and~\eqref{autker2} it follows that $j_{\Lambda^\prime}=-j_\Lambda$ for all $\Lambda,\Lambda^\prime\in \mathcal{L}$. This yields that $j_\Lambda=0$ for every $\Lambda\in \mathcal{L}$ or $n$ is even and $j_\Lambda=n/2$ for every $\Lambda\in \mathcal{L}$. In the former case, $f_0$ and hence $N_0$ are trivial. In the latter one, $n$ is even and $f_0=r_h$ for $h\in C$ with $|h|=2$. To finish the proof, it remains to verify that $r_h\in N_0$ in the latter case. Clearly, $r_h$ fixes every $\Lambda\in \mathcal{L}$ as a set. From Statements~$(1)$ and~$(2)$ of Proposition~\ref{scheme} it follows that $r_g^{r_h}=r_g$ and $s_g^{r_h}=s_{gh^2}=s_g$, respectively, for every $g\in C$. Thus, $r_h\in N_0$ as desired.
\end{proof}

\begin{lemm}\label{actiongl}
Let $f=(T\sigma)^\Omega\in \GaL(2,q)^\Omega$, where $T\in \GL(2,q)$ and $\sigma\in \Sigma$. Then $r_g^f=r_{g^\sigma}$ and $s_g^f=s_{\det(T)g^\sigma}$ for every $g\in C$.
\end{lemm}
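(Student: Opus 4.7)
The plan is to reduce everything to a direct computation of $\langle \alpha^f, \beta^f \rangle$ in terms of $\langle \alpha, \beta \rangle$, using the multiplicativity of the determinant and the fact that $\sigma$ is a field automorphism. Writing $\alpha = Ku$ and $\beta = Kv$ with $u,v \in V\setminus\{0\}$, one has $\alpha^f = K(Tu^\sigma)$ and $\beta^f = K(Tv^\sigma)$. Viewing $[u^\sigma \mid v^\sigma]$ as a $2\times 2$ matrix, the identity
\[
\det(Tu^\sigma, Tv^\sigma) \;=\; \det(T)\,\det(u^\sigma,v^\sigma) \;=\; \det(T)\,\det(u,v)^\sigma
\]
holds (the first equality from multiplicativity of the determinant, the second because $\sigma$ respects addition and multiplication in $\mathbb{F}$). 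Passing to cosets modulo $K$, this gives the key formula
\[
\langle \alpha^f, \beta^f \rangle \;=\; \det(T)\, \langle \alpha, \beta \rangle^\sigma
\]
with the convention $0^\sigma = 0$.

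With this identity in hand, I would treat $r_g$ and $s_g$ separately. For $s_g$: by definition $(\alpha,\beta)\in s_g$ means $\langle\alpha,\beta\rangle = g$, so the formula instantly gives $\langle\alpha^f,\beta^f\rangle = \det(T)\,g^\sigma$, hence $(\alpha^f,\beta^f)\in s_{\det(T)g^\sigma}$ and $s_g^f \subseteq s_{\det(T)g^\sigma}$. For $r_g$: suppose $\beta = g\alpha$, i.e. $v = xu$ for some $x \in \mathbb{F}^*$ with $Kx = g$; then $v^\sigma = x^\sigma u^\sigma$, so $Tv^\sigma = x^\sigma(Tu^\sigma)$ and therefore $\beta^f = (Kx^\sigma)\alpha^f = g^\sigma \alpha^f$, while $\langle\alpha^f,\beta^f\rangle = 0$ by the displayed formula; hence $(\alpha^f,\beta^f)\in r_{g^\sigma}$ and $r_g^f \subseteq r_{g^\sigma}$.

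To promote these inclusions to equalities, I would invoke a partition argument. The map $g \mapsto g^\sigma$ is a permutation of $C$ (since $\sigma \in \Sigma$ acts by automorphisms on $\mathbb{F}^*$ and stabilises $K$), and multiplication by the fixed element $\det(T) \in C$ is also a bijection of $C$. Thus both $\{r_g^f\}_{g\in C}$ and $\{r_{g^\sigma}\}_{g\in C}$ are partitions of $r_C$ (which is $f$-invariant by Eq.~\eqref{rc} and the formula above), and likewise for the $s$-relations on the complement; the inclusions between matched blocks therefore force equalities.

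The calculation is routine; the only points requiring care are the correct interpretation of $\det(T) g^\sigma$ as an element of $C$ via the coset $K\det(T)\cdot g^\sigma$, and verifying that the formula behaves correctly in the degenerate case $\langle\alpha,\beta\rangle = 0$ (handled above by tracking $v = xu$ directly rather than through the form). No serious obstacle is anticipated.
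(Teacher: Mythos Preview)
Your proposal is correct and follows essentially the same route as the paper: compute $\langle \alpha^f,\beta^f\rangle = \det(T)\,\langle\alpha,\beta\rangle^\sigma$ via multiplicativity of the determinant and the field automorphism, and for $r_g$ track $v=xu$ to get $\beta^f = g^\sigma\alpha^f$. The paper's proof is terser, passing directly from the one-sided inclusion to equality without spelling out the partition (or cardinality) argument you give; your version makes that step explicit, which is fine.
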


\begin{proof}
Let $g\in C$. Suppose that $(\alpha_1,\alpha_2)\in r_g$. Then $\alpha_2=g\alpha_1$. So 
$$\alpha_2^f=(g\alpha_1)^f=Tg^\sigma\alpha_1^\sigma=g^\sigma T\alpha_1^\sigma=g^\sigma (\alpha_1)^f.$$
Therefore $(\alpha_1^f,\alpha_2^f)\in r_{g^\sigma}$ and consequently $r_g^f=r_{g^\sigma}$.

Now suppose that $(\alpha_1,\alpha_2)\in s_g$. Then $\langle \alpha_1,\alpha_2\rangle=g$. This implies that
$$\langle \alpha_1^f,\alpha_2^f \rangle=\langle T\alpha_1^\sigma,T\alpha_2^\sigma \rangle=K\det(T\alpha_1^\sigma,T\alpha_2^\sigma)=K(\det(T)\det(\alpha_1,\alpha_2)^\sigma)=\det(T)g^\sigma$$
Therefore $(\alpha_1^f,\alpha_2^f)\in s_{\det(T)g^\sigma}$ and hence $s_g^f=s_{\det(T)g^\sigma}$.
\end{proof}

The corollary below immediately follows from Lemma~\ref{actiongl}.

\begin{corl}\label{oneside}
With the notation of Theorem~\ref{mainautiso}, $\aut(\mathcal{X}_0)\geq (\GL(2,q)_K\rtimes \Sigma_0)^\Omega$.
\end{corl}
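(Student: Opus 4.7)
The plan is to invoke Lemma~\ref{actiongl} directly: the claim will follow once I verify that every element of $(\GL(2,q)_K\rtimes \Sigma_0)^\Omega$ fixes each basic relation $r_g$ and $s_g$ of $\mathcal{X}_0$.

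Given an arbitrary $f=(T\sigma)^\Omega$ with $T\in \GL(2,q)_K$ and $\sigma\in \Sigma_0$, Lemma~\ref{actiongl} yields $r_g^f=r_{g^\sigma}$ and $s_g^f=s_{\det(T)g^\sigma}$ for every $g\in C$. Now, $\sigma\in \Sigma_0$ is by definition in the kernel of the action of $\Sigma$ on $C=\mathbb{F}^*/K$, so $g^\sigma=g$ for all $g\in C$; and $T\in \GL(2,q)_K$ means $\det(T)\in K$, i.e.\ $\det(T)=e$ as an element of $C$. Substituting these into the formulas produces $r_g^f=r_g$ and $s_g^f=s_{e\cdot g}=s_g$ for every $g\in C$. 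Since $S_0=\{r_g,s_g:g\in C\}$, every basic relation is preserved, hence $f\in \aut(\mathcal{X}_0)$, which gives the claimed inclusion.

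Since the verification is purely mechanical once Lemma~\ref{actiongl} is in hand, there is no real obstacle. The only point meriting a remark is that the semidirect product $\GL(2,q)_K\rtimes \Sigma_0$ is well-defined, which amounts to the identity $\det(T^\sigma)=\det(T)^\sigma$ together with the fact that $\sigma\in \Sigma_0$ preserves the coset $K$ in $C$; this was already noted in the paragraph preceding Theorem~\ref{mainautiso}.
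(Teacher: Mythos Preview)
Your proof is correct and follows exactly the same approach as the paper, which states only that the corollary ``immediately follows from Lemma~\ref{actiongl}''. You have simply spelled out the immediate verification: for $T\in\GL(2,q)_K$ one has $\det(T)\in K$ so the coset $\det(T)$ is $e\in C$, and for $\sigma\in\Sigma_0$ one has $g^\sigma=g$, whence Lemma~\ref{actiongl} gives $r_g^f=r_g$ and $s_g^f=s_g$.
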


\begin{corl}\label{isodrg}
The graphs $(\Omega,s_h)$ and $(\Omega,s_g)$ are isomorphic for all~$h,g\in C$.
\end{corl}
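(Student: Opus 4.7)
My plan is to extract the isomorphism directly from Lemma~\ref{actiongl}, which describes exactly how the basic relations $s_g$ transform under elements of $\GaL(2,q)^\Omega$. Since that lemma shows $s_g^{T^\Omega}=s_{\det(T)\,g}$ for every $T\in\GL(2,q)$ (taking $\sigma=\id$), we can realize all isomorphisms among the graphs $(\Omega,s_g)$ by varying $\det(T)$ over $\mathbb{F}^*$.

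Concretely, given $h,g\in C$, I would pick any representative $x\in\mathbb{F}^*$ of the coset $gh^{-1}\in\mathbb{F}^*/K=C$ and set $T=\diag(x,1)\in\GL(2,q)$. Then $\det(T)=x$, whose image in $C$ equals $gh^{-1}$. Applying Lemma~\ref{actiongl} to the permutation $f=T^\Omega\in\sym(\Omega)$ yields
$$s_h^f = s_{\det(T)\cdot h} = s_{gh^{-1}\cdot h} = s_g,$$
so $f$ is an isomorphism from $(\Omega,s_h)$ onto $(\Omega,s_g)$.

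I do not anticipate any obstacle here: the surjectivity of $\det\colon\GL(2,q)\to\mathbb{F}^*$ gives us full freedom to prescribe the $C$-component of the shift, and Lemma~\ref{actiongl} handles all the bookkeeping. In particular, neither the assumption $n\geq 3$ nor any finer information about $\aut(\mathcal{X}_0)$ is needed — the corollary is a one-line consequence of the action formula.
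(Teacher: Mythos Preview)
Your argument is correct and essentially identical to the paper's: both invoke Lemma~\ref{actiongl} with $\sigma=\id$ to obtain $s_h^{T^\Omega}=s_{\det(T)h}$ and then choose $T$ with prescribed determinant. The only cosmetic difference is that the paper reduces first to $h=e$ by transitivity, whereas you go directly from $s_h$ to $s_g$ and make the matrix explicit as $T=\diag(x,1)$.
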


\begin{proof}
To prove the corollary, it is enough to show that $(\Omega,s_e)$ and $(\Omega,s_g)$ are isomorphic for every~$g\in C$. Let $T\in \GL(2,q)$ such that $\det(T)\in g$ and $f_T\in \sym(\Omega)$ the permutation $\alpha\mapsto T\alpha$, $\alpha \in \Omega$, induced by $T$ on $\Omega$. Then $s_e^{f_T}=s_{K\det(T)}=s_g$ by Lemma~\ref{actiongl} and hence $f_T$ is an isomorphism from $(\Omega,s_e)$ to $(\Omega,s_g)$.
\end{proof}

The next statement follows from the classification of $2$-transitive permutation groups (see~\cite[Theorem~5.1]{Cam1}). It can also be found, e.g., in~\cite{Cam2}.

\begin{lemm}\label{overgroup}
Let $q>23$ and $G\leq S_{q+1}$ such that $G\geq \PSL(2,q)$. Then $G\leq \PGaL(2,q)$ or $G$ is isomorphic to one of the groups $A_{q+1}$, $S_{q+1}$.
\end{lemm}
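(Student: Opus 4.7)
The plan is to invoke the classification of finite $2$-transitive permutation groups (a consequence of the CFSG; see~\cite{Cam2}) and eliminate all types except the two stated. Since $\PSL(2,q)$ acts $2$-transitively on the $q+1$ points of $\mathbb{P}^{1}(\mathbb{F}_{q})$, any overgroup $G\geq\PSL(2,q)$ in $\sym(q+1)$ is itself $2$-transitive of degree $q+1$, so either $G$ is of affine type or $G$ is almost simple.

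I would first rule out the affine type. If $N=\soc(G)$ is elementary abelian of order $p^{n}=q+1$ and regular, then $N\trianglelefteq G$ and, since $\PSL(2,q)$ is simple non-abelian for $q\geq 4$, the normal abelian subgroup $N\cap\PSL(2,q)$ of $\PSL(2,q)$ is trivial. So $\PSL(2,q)$ embeds into the point stabilizer $G_{\alpha}\leq\GL(n,p)$. Only finitely many prime powers $q>23$ satisfy $q+1=p^{n}$ (the smallest being $q=31$), and in each such case the embedding is ruled out by Hering's theorem on transitive linear groups together with an order/element-order inspection.

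Next I would handle the almost simple case. Let $T=\soc(G)$. Because $T\trianglelefteq G$ and $\PSL(2,q)$ is simple non-abelian, $T\cap\PSL(2,q)$ is either trivial or all of $\PSL(2,q)$. A trivial intersection would embed $\PSL(2,q)$ into $G/T\leq\out(T)$, which is solvable by the Schreier conjecture (proved using CFSG); this contradicts the non-solvability of $\PSL(2,q)$. Hence $\PSL(2,q)\leq T$, and $T$ is a simple $2$-transitive group of degree $q+1$ on Cameron's list. The socle $T=A_{q+1}$ gives $G\in\{A_{q+1},S_{q+1}\}$. For $T=\PSL(d,q')$ the degree equation $(q'^{d}-1)/(q'-1)=q+1$ yields $q=q'(1+q'+\cdots+q'^{d-2})$; for $d\geq 3$, writing $q'$ as a power of the characteristic $p$ of $q$, the second factor is congruent to $1\pmod{p}$ and strictly exceeds $1$, so $q$ cannot be a prime power. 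Hence $d=2$ and $q'=q$, giving $G\leq\aut(\PSL(2,q))=\PGaL(2,q)$. The remaining socles $\PSU(3,q')$, $\sz(q')$, $R(q')$, $\PSp(2d,2)$ and the finitely many sporadic $2$-transitive actions (degrees $11,12,15,22,23,24,176,276$) are eliminated by either failing the numerical condition ``$q+1$ is the degree with $q$ a prime power'' once $q>23$, or by an order/element-order comparison with $\PSL(2,q)$.

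The main obstacle is the bookkeeping in the almost simple case, particularly verifying that no exotic $2$-transitive socle survives. The bound $q>23$ is sharp: at $q=23$ the group $M_{24}$ contains $\PSL(2,23)$ as a maximal subgroup and acts $2$-transitively on $24=q+1$ points, yet $M_{24}$ is neither contained in $\PGaL(2,23)$ nor isomorphic to $A_{24}$ or $S_{24}$.
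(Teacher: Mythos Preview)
The paper does not give its own proof of this lemma; it simply records it as a consequence of the classification of finite $2$-transitive permutation groups, with a pointer to~\cite{Cam2} where the statement appears. Your plan to run through that classification is therefore exactly the intended route, and your treatment of the almost simple case is largely sound: the reduction $\PSL(2,q)\leq T$ via Schreier, the arithmetic eliminating $T=\PSL(d,q')$ for $d\geq 3$, and the order comparisons for the unitary, Suzuki and Ree socles all work, while the sporadic $2$-transitive degrees exceeding $24$ are $176$ and $276$, neither one more than a prime power.

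There is, however, a genuine gap in your affine case. The assertion that only finitely many prime powers $q>23$ satisfy $q+1=p^{n}$ is not a theorem: every Mersenne prime $q=2^{k}-1$ gives such a pair with $p=2$, and every $q=2^{2^{j}}$ with $q+1$ a Fermat prime gives one with $n=1$, and neither family is known to be finite. You therefore cannot dispose of the affine type by case-by-case inspection. A uniform element-order argument works instead. If $q$ is even, then $\PSL(2,q)$ contains an element of order $q+1=p^{n}$; its image in $\GL(n,p)$ would be a $p$-element, hence unipotent of order at most $p^{\lceil\log_{p}n\rceil}<p^{n}$, a contradiction. If $q$ is odd, then $p=2$ and $\PSL(2,q)$ contains an element of order $(q+1)/2=2^{n-1}$; the same bound forces $n-1\leq\lceil\log_{2}n\rceil$, whence $n\leq 3$ and $q\leq 7$. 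This replaces your finiteness claim and the appeal to Hering entirely. (Your handling of $T=\PSp(2d,2)$ is also brisk---for instance $d=3$ yields degree $28$ and $q=27$, which one must still exclude via $13\mid|\PSL(2,27)|$ but $13\nmid|\PSp(6,2)|$---but the affine gap is the substantive issue.)
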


\begin{lemm}\label{notas}
If $q\geq 5$, then $\aut(\mathcal{X}_0)^{\mathcal{L}}$ is not isomorphic to $A_{q+1}$ or $S_{q+1}$.
\end{lemm}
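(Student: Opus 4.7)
The plan is to construct a non-constant $\aut(\mathcal{X}_0)$-invariant function on ordered $4$-tuples of pairwise distinct elements of $\mathcal{L}$. Since $q\ge 5$ gives $|\mathcal{L}|=q+1\ge 6$, any subgroup of $\sym(\mathcal{L})$ isomorphic to $A_{q+1}$ or $S_{q+1}$ coincides with the natural copy and is therefore at least $4$-transitive on $\mathcal{L}$; such a subgroup cannot admit a non-constant invariant on ordered $4$-tuples, which will give the desired contradiction.

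For the invariant I use a $C$-valued cross-ratio. Given distinct $\Lambda_1,\Lambda_2,\Lambda_3,\Lambda_4\in\mathcal{L}$, pick representatives $\alpha_i\in\Lambda_i$ and set $g_{ij}=\langle\alpha_i,\alpha_j\rangle\in C$; these are nonzero by Eq.~\eqref{rc} because the lines are distinct. Define
\[
\rho(\Lambda_1,\Lambda_2,\Lambda_3,\Lambda_4)=g_{13}g_{24}g_{14}^{-1}g_{23}^{-1}\in C.
\]
A routine check shows that $\rho$ does not depend on the choice of the $\alpha_i$: replacing $\alpha_i$ by $h_i\alpha_i$ with $h_i\in C$ multiplies $g_{ij}$ by $h_ih_j$, and in the above expression each $h_i$ appears once as a factor and once as an inverse, so everything cancels (because $C$ is abelian).

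Next I verify that $\rho$ is $\aut(\mathcal{X}_0)$-invariant. Every $f\in\aut(\mathcal{X}_0)$ preserves each basic relation $s_g$, hence $\langle f(\alpha),f(\beta)\rangle=\langle\alpha,\beta\rangle$ whenever the latter lies in $C$. Since the lines $\Lambda_i$ are distinct, this applies to each of the $g_{ij}$, and so $\rho$ is preserved.

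It remains to show $\rho$ is non-constant. Taking $\alpha_i=Kv_i$ with $v_1=(1,0)$, $v_2=(0,1)$, $v_3=(1,1)$, $v_4=(1,t)$ for $t\in\mathbb{F}\setminus\{0,1\}$, straightforward determinant computations give $g_{13}=K$, $g_{14}=Kt$, $g_{23}=g_{24}=K(-1)$, whence $\rho=Kt^{-1}$. The standing assumption of the subsection is $n\ge 3$ (Corollary~\ref{noncommut}), and the quotient map $\mathbb{F}^*\to C$ is surjective, so as $t$ ranges over $\mathbb{F}^*\setminus\{1\}$ the value $Kt^{-1}$ attains at least two distinct elements of $C$. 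The only mild obstacle is to recognise that the classical projective cross-ratio descends to a well-defined $C$-valued invariant preserved by the full $\aut(\mathcal{X}_0)$, not merely by the projective group; once this is observed, the rest is routine.
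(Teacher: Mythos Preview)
Your argument is correct, and it is genuinely different from the paper's proof.

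The paper proceeds by an intricate group-theoretic analysis of stabilizers: assuming $\aut(\mathcal{X}_0)^{\mathcal{L}}$ is $A_{q+1}$ or $S_{q+1}$, it computes the indices $|\aut(\mathcal{X}_0):\aut(\mathcal{X}_0)_\alpha|$ and $|\aut(\mathcal{X}_0)_{\{\Lambda\}}:\aut(\mathcal{X}_0)_\alpha|$, invokes Lemma~\ref{autker} on the kernel $N_0$, deduces that $\aut(\mathcal{X}_0)^{\mathcal{L}}_{\{\Lambda\}}$ is $A_q$ or $S_q$ with a normal subgroup $\aut(\mathcal{X}_0)^{\mathcal{L}}_\alpha$ of index $n/|N_0|\ge 2$, and then rules out each possibility using the simplicity of $A_q$ and a double-cover argument for $S_q$. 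Your route bypasses all of this: you exhibit a $C$-valued cross-ratio on ordered $4$-tuples of distinct lines that is well defined (the scaling factors cancel), invariant under $\aut(\mathcal{X}_0)$ (because each $s_g$ is preserved), and non-constant (since $n\ge 3$). Since for $q\ge 5$ any subgroup of $\sym(\mathcal{L})$ of order $|A_{q+1}|$ or $|S_{q+1}|$ is the natural alternating or symmetric group and hence $4$-transitive, the existence of a non-constant invariant gives an immediate contradiction. Your proof is shorter, conceptually cleaner, and does not rely on Lemma~\ref{autker}; the paper's approach, on the other hand, extracts more structural information about the stabilizer chain along the way, though that information is not used elsewhere.
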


\begin{proof}
Assume the contrary that $\aut(\mathcal{X}_0)^{\mathcal{L}}$ is isomorphic to $A_{q+1}$ or $S_{q+1}$. Corollary~\ref{oneside} implies that $\aut(\mathcal{X}_0)\geq \SL(2,q)$ and hence $\aut(\mathcal{X}_0)$ is transitive on $\Omega$. So 
\begin{equation}\label{ind1}
|\aut(\mathcal{X}_0):\aut(\mathcal{X}_0)_{\alpha}|=|\Omega|=(q+1)n
\end{equation}
for every $\alpha\in \Omega$.

Let $\Lambda\in \mathcal{L}$ and $\alpha\in \Lambda$. Due to Corollary~\ref{actonl}, $\Lambda$ is a block of $\aut(\mathcal{X}_0)$. Clearly, $\aut(\mathcal{X}_0)_{\alpha}\leq \aut(\mathcal{X}_0)_{\{\Lambda\}}$. Since $\Lambda$ is a class of the thin radical parabolic $r_C$, we conclude that 
\begin{equation}\label{thinrad2}
\aut(\mathcal{X}_0)_{\{\Lambda\}}^\Lambda~\text{is a regular group isomorphic to}~C
\end{equation}
by Eqs.~\eqref{thinrad0} and~\eqref{thinrad} and hence $\aut(\mathcal{X}_0)^\Lambda_{\alpha}$ is trivial. This yields that $\aut(\mathcal{X}_0)_{\alpha}$ is the kernel of the action of $\aut(\mathcal{X}_0)_{\{\Lambda\}}$ on $\Lambda$. Therefore
\begin{equation}\label{ind2}
\aut(\mathcal{X}_0)_{\alpha}\trianglelefteq \aut(\mathcal{X}_0)_{\{\Lambda\}}~\text{and}~|\aut(\mathcal{X}_0)_{\{\Lambda\}}:\aut(\mathcal{X}_0)_{\alpha}|=|\Lambda|=n.
\end{equation}

One can see that $r_g$, $g\in C$, is a semiregular permutation on $\Omega$ fixing every element of $\mathcal{L}$ as a set. Together with Lemma~\ref{autker}, this implies that 
\begin{equation}\label{stabpoint}
\aut(\mathcal{X}_0)_{\alpha}^\mathcal{L}\cong\aut(\mathcal{X}_0)_{\alpha}
\end{equation}
and $|\aut(\mathcal{X}_0)_{\{\Lambda\}}^\mathcal{L}|=|\aut(\mathcal{X}_0)_{\{\Lambda\}}|/|N_0|$. Therefore
$$|\aut(\mathcal{X}_0)^\mathcal{L}|/|\aut(\mathcal{X}_0)_{\alpha}^\mathcal{L}|=|\aut(\mathcal{X}_0)|/(|N_0||\aut(\mathcal{X}_0)_{\alpha}|)=(q+1)n/|N_0|$$
by Eq.~\eqref{ind1} and
$$|\aut(\mathcal{X}_0)_{\{\Lambda\}}^\mathcal{L}|/|\aut(\mathcal{X}_0)_{\alpha}^\mathcal{L}|=|\aut(\mathcal{X}_0)_{\{\Lambda\}}|/(|N_0||\aut(\mathcal{X}_0)_{\alpha}|)=n/|N_0|$$
by Eq.~\eqref{ind2}. From the above two equalities it follows that
$$|\aut(\mathcal{X}_0)^\mathcal{L}|/|\aut(\mathcal{X}_0)_{\{\Lambda\}}^\mathcal{L}|=q+1.$$
Thus, $\aut(\mathcal{X}_0)_{\{\Lambda\}}^\mathcal{L}$ is a subgroup of $\aut(\mathcal{X}_0)^\mathcal{L}$ of index~$q+1$ having a normal subgroup $\aut(\mathcal{X}_0)_{\alpha}^\mathcal{L}$ of index~$n/|N_0|$. In view of Eq.~\eqref{maineq2} and Lemma~\ref{autker}, we have $n/|N_0|\geq 2$.

Since $\aut(\mathcal{X}_0)^{\mathcal{L}}$ is isomorphic to $A_{q+1}$ or $S_{q+1}$ by the assumption and $q\geq 5$, we conclude that $\aut(\mathcal{X}_0)_{\{\Lambda\}}^\mathcal{L}$ is isomorphic to $A_q$ or $S_q$. Due to $q\geq 5$, the group $A_q$ is simple and the group $S_q$ has a unique proper nontrivial normal subgroup $A_q$. So $\aut(\mathcal{X}_0)_{\alpha}^\mathcal{L}$ being a proper normal subgroup of $\aut(\mathcal{X}_0)_{\{\Lambda\}}^\mathcal{L}$ is trivial or isomorphic to $A_q$.

In the first case, $\aut(\mathcal{X}_0)_{\alpha}$ is also trivial by Eq.~\eqref{stabpoint}. However, $\aut(\mathcal{X}_0)_{\alpha}\geq \SL(2,q)^\Omega_{\alpha}$ by Corollary~\ref{oneside} and 
$$|\SL(2,q)^\Omega_{\alpha}|=\frac{|\SL(2,q)|}{|K||\Omega|}=\frac{(q^2-1)(q^2-q)}{(q^2-1)(q-1)}=q>1,$$
a contradiction.

In the second one, $\aut(\mathcal{X}_0)_{\{\Lambda\}}^\mathcal{L}\cong S_q$ and 
$$n/|N_0|=|\aut(\mathcal{X}_0)_{\{\Lambda\}}^\mathcal{L}|/|\aut(\mathcal{X}_0)_{\alpha}^\mathcal{L}|=2.$$
The latter equality, Eq.~\eqref{maineq2}, and Lemma~\ref{autker} yield that $n=4$ and $N_0=\langle r_h \rangle\cong C_2$, where $h\in C$ with $|h|=2$. Clearly, $r_h^f=r_h$ for every $f\in \aut(\mathcal{X}_0)$ and hence $r_h\in Z(\aut(\mathcal{X}_0)_{\{\Lambda\}})$. Therefore $\aut(\mathcal{X}_0)_{\{\Lambda\}}$ is a double cover of $S_q$. As $\aut(\mathcal{X}_0)_{\alpha}^\mathcal{L}\cong\aut(\mathcal{X}_0)_{\alpha}$, we conclude that $\aut(\mathcal{X}_0)_{\alpha}\cong A_q$. A nonsplit double cover of $S_q$ does not have a normal subgroup isomorphic to $A_q$ and hence $\aut(\mathcal{X}_0)_{\alpha}$ can not be normal in $\aut(\mathcal{X}_0)_{\{\Lambda\}}$, a contradiction to the first part of Eq.~\eqref{ind2}. So $\aut(\mathcal{X}_0)_{\{\Lambda\}}\cong S_q\times C_2$. Then 
$$\aut(\mathcal{X}_0)_{\{\Lambda\}}/\aut(\mathcal{X}_0)_{\alpha}\cong C_2\times C_2.$$
On the other hand, 
$$\aut(\mathcal{X}_0)_{\{\Lambda\}}/\aut(\mathcal{X}_0)_{\alpha}\cong \aut(\mathcal{X}_0)_{\{\Lambda\}}^\Lambda\cong C\cong C_4$$
by Eq.~\eqref{thinrad2}, a contradiction.      
\end{proof}

\begin{prop}\label{auttatra}
With the notation of Theorem~\ref{mainautiso}, $\aut(\mathcal{X}_0)=(\GL(2,q)_K\rtimes \Sigma_0)^\Omega$.
\end{prop}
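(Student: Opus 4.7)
The inclusion $\aut(\mathcal{X}_0)\supseteq (\GL(2,q)_K\rtimes\Sigma_0)^\Omega$ is Corollary~\ref{oneside}, so the task is the reverse inclusion. My plan, given $f\in\aut(\mathcal{X}_0)$, is to show $f=((yT)\sigma)^\Omega$ for some $yT\in\GL(2,q)_K$ and $\sigma\in\Sigma_0$, first pinning down the permutation $f^\mathcal{L}$ inside $\PGaL(2,q)$, then lifting it to $\GaL(2,q)$ and correcting the lift by a thin radical element.

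For the action on $\mathcal{L}$ I would combine Corollary~\ref{oneside} (which yields $\aut(\mathcal{X}_0)^\mathcal{L}\supseteq\PSL(2,q)$) with Lemma~\ref{overgroup} for $q>23$, and then exclude the $A_{q+1}$ and $S_{q+1}$ alternatives via Lemma~\ref{notas}; the small values of $q$ can be handled separately by inspection of the overgroup lattice of $\PSL(2,q)$ in $S_{q+1}$ together with Lemma~\ref{notas}. This gives $\aut(\mathcal{X}_0)^\mathcal{L}\leq\PGaL(2,q)$. Next, I fix $T\sigma\in\GaL(2,q)$ with $(T\sigma)^\mathcal{L}=f^\mathcal{L}$, set $h:=(T\sigma)^\Omega$, and examine $\varphi:=fh^{-1}$. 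Since $\varphi^\mathcal{L}$ is trivial, $\varphi$ stabilizes every $\Lambda\in\mathcal{L}$ setwise; combining Lemma~\ref{actiongl} with $r_g^f=r_g$ and $s_g^f=s_g$ gives $r_g^\varphi=r_{g^{\sigma^{-1}}}$ (equivalently, $\varphi(g\alpha)=g^{\sigma^{-1}}\varphi(\alpha)$ for all $g\in C$, $\alpha\in\Omega$) and $s_g^\varphi=s_{(g\det(T)^{-1})^{\sigma^{-1}}}$.

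The crux will be extracting $\sigma\in\Sigma_0$. I would fix $\alpha_0\in\Lambda_0$, pick for each $\Lambda\neq\Lambda_0$ the unique $\alpha_\Lambda\in\Lambda$ with $(\alpha_0,\alpha_\Lambda)\in s_e$ (Lemma~\ref{neigh}), and write $\varphi(\alpha_\Lambda)=c_\Lambda\alpha_\Lambda$. Substituting the $\sigma^{-1}$-equivariance into the $s$-identity on pairs $(g_0\alpha_0,g_1\alpha_\Lambda)\in s_{g_0g_1}$ forces $c_{\Lambda_0}c_\Lambda=\det(T)^{-\sigma^{-1}}$, so $c_\Lambda$ is constant on $\mathcal{L}\setminus\{\Lambda_0\}$. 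Doing the same on pairs $(g\alpha_\Lambda,g'\alpha_{\Lambda'})$ with $\Lambda\neq\Lambda'$ both different from $\Lambda_0$, and setting $h_{\Lambda,\Lambda'}:=\langle\alpha_\Lambda,\alpha_{\Lambda'}\rangle$, yields $c_\Lambda c_{\Lambda'}h_{\Lambda,\Lambda'}=h_{\Lambda,\Lambda'}^{\sigma^{-1}}\det(T)^{-\sigma^{-1}}$, which says the group homomorphism $h\mapsto h^{\sigma^{-1}}h^{-1}$ of $C$ is constant on the image of $(\Lambda,\Lambda')\mapsto h_{\Lambda,\Lambda'}$. A coordinate computation in the standard model $\alpha_0=K(1,0)$, $\alpha_\Lambda=K(t,1)$ ($t\in\mathbb{F}$) shows $h_{\Lambda,\Lambda'}=K(t-t')$ exhausts $C$, so the homomorphism is trivial and $\sigma\in\Sigma_0$.

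Once this is achieved, the equations collapse to $c_\Lambda=c$ constant with $c^2=K\det(T)^{-1}$, and since $\varphi$ is now $C$-equivariant and agrees with $r_c$ on each $\alpha_\Lambda$, we get $\varphi=r_c$ on all of $\Omega$. Writing $r_c=(yI)^\Omega$ for any $y\in c$, we obtain $f=r_c\,h=((yT)\sigma)^\Omega$, and $\det(yT)=y^2\det(T)\in K$ places $yT$ in $\GL(2,q)_K$; with $\sigma\in\Sigma_0$ this is the required conclusion. I expect the bookkeeping in the crux step (tracking the $\sigma^{-1}$-twist through three classes of $\mathcal{L}$ simultaneously so that the cocycle equations force $\sigma\in\Sigma_0$ via the homomorphism argument) to be the main obstacle.
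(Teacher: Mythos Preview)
Your argument is correct, and the first half (bounding $\aut(\mathcal{X}_0)^{\mathcal{L}}$ inside $\PGaL(2,q)$ via Corollary~\ref{oneside}, Lemma~\ref{overgroup} and Lemma~\ref{notas}) is exactly what the paper does. The second half, however, differs.

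After obtaining $\aut(\mathcal{X}_0)^{\mathcal{L}}\leq\PGaL(2,q)$, the paper simply invokes Lemma~\ref{autker} (the kernel $N_0$ of the action on~$\mathcal{L}$ is trivial or $\langle r_h\rangle$, hence inside $\GaL(2,q)^\Omega$) to conclude $\aut(\mathcal{X}_0)\leq\GaL(2,q)^\Omega$. From there the finish is two lines via Lemma~\ref{actiongl}: for $f=(T\sigma)^\Omega\in\aut(\mathcal{X}_0)$ one has $s_e^f=s_{K\det(T)}$, forcing $\det(T)\in K$, and $r_g^f=r_{g^\sigma}$, forcing $\sigma\in\Sigma_0$. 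You instead bypass Lemma~\ref{autker} entirely: you lift $f^{\mathcal{L}}$ to some $h=(T\sigma)^\Omega$, set $\varphi=fh^{-1}$ (which need not lie in $\aut(\mathcal{X}_0)$, so Lemma~\ref{autker} would not apply directly), and run an explicit coordinate computation on the twisted relations $r_g^\varphi=r_{g^{\sigma^{-1}}}$, $s_g^\varphi=s_{(g\det(T)^{-1})^{\sigma^{-1}}}$ to pin down $\varphi=r_c$ and simultaneously extract $\sigma\in\Sigma_0$ and $c^2=K\det(T)^{-1}$. Your surjectivity check that $h_{\Lambda,\Lambda'}=K(t-t')$ exhausts $C$ is the right mechanism for forcing the endomorphism $h\mapsto h^{\sigma^{-1}}h^{-1}$ to be trivial. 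What your route buys is self-containment (no separate kernel lemma needed) and an explicit witness $f=((yT)\sigma)^\Omega$; what the paper's route buys is brevity, since once $f$ is known to lie in $\GaL(2,q)^\Omega$ the constraints $\det(T)\in K$ and $\sigma\in\Sigma_0$ drop out immediately from Lemma~\ref{actiongl} without any cocycle bookkeeping.
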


\begin{proof}
The inclusion $\aut(\mathcal{X}_0)\geq (\GL(2,q)_K\rtimes \Sigma_0)^\Omega$ follows from Lemma~\ref{actiongl}. Let us prove the reverse inclusion $\aut(\mathcal{X}_0)\leq (\GL(2,q)_K\rtimes \Sigma_0)^{\Omega}$. It is easy to see that $\GL(2,q)_K\geq \SL(2,q)$ and hence 
\begin{equation}\label{pgl} 
\GL(2,q)_K^{\mathcal{L}}\geq \PSL(2,q).
\end{equation}

Since $\aut(\mathcal{X}_0)\geq \GL(2,q)_K^\Omega$, Eq.~\eqref{pgl} implies that $\aut(\mathcal{X}_0)^\mathcal{L}\geq \PSL(2,q)$. The group $\PSL(2,q)$ (acting on $\mathcal{L}$) is $2$-transitive by~\cite[Theorem~5.1]{Cam1} and hence $\aut(\mathcal{X}_0)^\mathcal{L}$ so is. If $q\leq 23$, then one can verify by the computer calculation using~\cite{GAP} and the list of $2$-transitive groups from~\cite[Theorem~5.1]{Cam1} that
\begin{equation}\label{autsubgroup}
\aut(\mathcal{X}_0)^{\mathcal{L}}\leq \PGaL(2,q).
\end{equation}  
Otherwise, Eq.~\eqref{autsubgroup} holds or $\aut(\mathcal{X}_0)^{\mathcal{L}}$ is isomorphic to one of the groups $A_{q+1}$, $S_{q+1}$ by Lemma~\ref{overgroup}. However, the latter is impossible by Lemma~\ref{notas}. Thus, Eq.~\eqref{autsubgroup} holds in any case. Together with Lemma~\ref{autker}, this implies that $\aut(\mathcal{X}_0)\leq \GaL(2,q)^\Omega$.

Assume that there exists $f\in \aut(\mathcal{X}_0)\cap (\GaL(2,q)^\Omega\setminus (\GL(2,q)_K\rtimes \Sigma_0)^\Omega)$. Then $f=(T\sigma)^\Omega$, where $T\in \GL(2,q)$, $\sigma \in \Sigma$, and one of the following conditions holds: $(1)$ $\det(T)\notin K$; $(2)$ $\sigma$ acts nontrivially on $C$. If the first condition holds, then $s_e^f=s_{\det(T)}\neq s_e$ by Lemma~\ref{actiongl}, a contradiction to $f\in \aut(\mathcal{X}_0)$. If the second one holds, then there exist $g\in C$ such that $g^\sigma\neq g$. So $r_g^f=r_{g^\sigma}\neq r_g$, a contradiction to $f\in \aut(\mathcal{X}_0)$. Therefore $\aut(\mathcal{X}_0)\leq (\GL(2,q)_K\rtimes \Sigma_0)^{\Omega}$ and we are done. 
\end{proof}

\begin{corl}\label{x0schur}
The scheme $\mathcal{X}_0$ is schurian.
\end{corl}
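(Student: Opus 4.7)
The plan is to unfold the definition: $\mathcal{X}_0$ is schurian if and only if $S_0 = \orb(\aut(\mathcal{X}_0), \Omega^2)$. Since every basic relation is by definition invariant under $\aut(\mathcal{X}_0)$, each $r_g$ and $s_g$ is already a union of orbits of $\aut(\mathcal{X}_0)$ on $\Omega^2$. Therefore it is enough to show that $\aut(\mathcal{X}_0)$ acts transitively on each of the $2n$ basic relations, and in fact it suffices to verify this for the smaller subgroup $\SL(2,q)^\Omega$, which is contained in $\aut(\mathcal{X}_0)$ by Proposition~\ref{auttatra} (since $\SL(2,q) \leq \GL(2,q)_K$).

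For the thin relations $r_g$, I would use that $r_g = \{(\alpha, g\alpha) : \alpha \in \Omega\}$ and that the actions of $\GL(2,q)$ and $C$ on $\Omega$ commute: if $T \in \GL(2,q)$, $g = Kx$, and $\alpha = Kv$, then $T(g\alpha) = K(Txv) = K(xTv) = g(T\alpha)$. Combined with the standard fact that $\SL(2,q)$ is transitive on the nonzero vectors of $V$ (hence on $\Omega$), this gives transitivity on $r_g$ immediately.

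For the relations $s_g$ the argument is slightly more involved but still elementary. Given $(\alpha_i,\beta_i) \in s_g$ for $i=1,2$, pick representatives $\alpha_i = Ku_i$, $\beta_i = Kv_i$. Because $\det(ku_i, k'v_i) = kk'\det(u_i,v_i)$ and $kk'$ ranges over all of $K$ as $(k,k') \in K \times K$, I can rescale representatives so that $\det(u_1,v_1) = \det(u_2,v_2)$, using some fixed $x \in g$. Since these determinants are nonzero, each pair $(u_i,v_i)$ is an ordered basis of $V$, so there is a unique $T \in \GL(2,q)$ with $Tu_1 = u_2$ and $Tv_1 = v_2$; the matching determinants force $\det(T) = 1$, hence $T \in \SL(2,q)$. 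Then $T^\Omega \in \aut(\mathcal{X}_0)$ maps $(\alpha_1,\beta_1)$ to $(\alpha_2,\beta_2)$, proving transitivity on $s_g$.

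I do not expect any real obstacle here: the heavy lifting has already been done in Proposition~\ref{auttatra}, which identifies $\aut(\mathcal{X}_0)$ explicitly and gives us enough automorphisms to move pairs around. The only minor subtlety is the rescaling step for the $s_g$ argument, and that reduces to the harmless fact that the determinant map $(k,k') \mapsto kk'$ from $K\times K$ to $K$ is surjective.
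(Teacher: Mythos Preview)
Your proposal is correct and follows essentially the same approach as the paper: both use Proposition~\ref{auttatra} to get enough automorphisms, handle $r_g$ via transitivity of $\SL(2,q)$ on nonzero vectors, and handle $s_g$ via transitivity of $\GL(2,q)$ on ordered bases together with a determinant check. The only cosmetic difference is that you rescale representatives in advance to land in $\SL(2,q)$, whereas the paper picks an arbitrary $T\in\GL(2,q)$ and then observes $\det(T)\in K$ a posteriori; these are equivalent.
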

  
\begin{proof}
Due to Proposition~\ref{auttatra}, we have $\aut(\mathcal{X}_0)\geq \GL(2,q)_K^\Omega$. Let $g\in C$. Since $\SL(2,q)\leq \GL(2,q)_K$ is transitive on $V\setminus \{0\}$, $\GL(2,q)_K$ is transitive on $r_g$. The group $\GL(2,q)$ is transitive on the set of all ordered bases of $V$. So for all $(\alpha_1,\alpha_2),(\beta_1,\beta_2)\in s_g$, there exists $T\in \GL(2,q)$ such that $(T\alpha_1,T\alpha_2)=(\beta_1,\beta_2)$. As $g=\langle \beta_1,\beta_2\rangle=\langle T\alpha_1,T\alpha_2\rangle=\det(T)\langle \alpha_1,\alpha_2\rangle=\det(T)g$, we conclude that $\det(T)\in K$ and hence $T\in \GL(2,q)_K$. Therefore $\GL(2,q)_K$ is transitive on every basic relation of $\mathcal{X}_0$. Thus, $\mathcal{X}_0$ is schurian. 
\end{proof}

\begin{prop}\label{isotatra}
With the notation of Theorem~\ref{mainautiso}, $\iso(\mathcal{X}_0)=\GaL(2,q)^{\Omega}$.
\end{prop}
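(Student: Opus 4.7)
The plan is to verify both inclusions in the claimed equality $\iso(\mathcal{X}_0) = \GaL(2,q)^\Omega$. The inclusion $\GaL(2,q)^\Omega \subseteq \iso(\mathcal{X}_0)$ is immediate from Lemma~\ref{actiongl}: each $(T\sigma)^\Omega$ sends $r_g$ to $r_{g^\sigma} \in S_0$ and $s_g$ to $s_{\det(T)g^\sigma} \in S_0$, hence permutes the basic relations of $\mathcal{X}_0$. For the reverse inclusion, fix $f \in \iso(\mathcal{X}_0)$. Since $r_C$ is the thin radical parabolic by Eq.~\eqref{thinrad}, i.e., the union of all valency-$1$ basic relations, which is a canonical object preserved by every scheme isomorphism, we have $r_C^f = r_C$. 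Lemma~\ref{isoscheme}(1) then gives $f$ preserves the block system $\mathcal{L}$ and induces $f^\mathcal{L} \in \sym(\mathcal{L})$.

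The next step is to prove $\iso(\mathcal{X}_0)^\mathcal{L} \leq \PGaL(2,q)$. Since $\aut(\mathcal{X}_0)$ is the kernel of the natural $\iso(\mathcal{X}_0)$-action on $S_0$, it is normal in $\iso(\mathcal{X}_0)$, whence $\aut(\mathcal{X}_0)^\mathcal{L} \trianglelefteq \iso(\mathcal{X}_0)^\mathcal{L}$. Proposition~\ref{auttatra} yields $\PSL(2,q) \leq \aut(\mathcal{X}_0)^\mathcal{L} \leq \PGaL(2,q)$, so $\iso(\mathcal{X}_0)^\mathcal{L}$ is $2$-transitive. For $q > 23$, Lemma~\ref{overgroup} leaves three possibilities: $\iso(\mathcal{X}_0)^\mathcal{L} \leq \PGaL(2,q)$, $\iso(\mathcal{X}_0)^\mathcal{L} \cong A_{q+1}$, or $\iso(\mathcal{X}_0)^\mathcal{L} \cong S_{q+1}$. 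The latter two are excluded: by simplicity of $A_{q+1}$ for $q \geq 5$, the nontrivial normal subgroup $\aut(\mathcal{X}_0)^\mathcal{L}$ would have to contain $A_{q+1}$, contradicting $\aut(\mathcal{X}_0)^\mathcal{L} \leq \PGaL(2,q)$. For $q \leq 23$, the same conclusion follows by direct computer calculation as in the proof of Proposition~\ref{auttatra}. Because the action on $\mathcal{L}$ surjects $\GaL(2,q)^\Omega$ onto $\PGaL(2,q)$, we may pick $h \in \GaL(2,q)^\Omega$ with $h^\mathcal{L} = f^\mathcal{L}$ and set $g := h^{-1}f$, which lies in the kernel $N_0'$ of the map $\iso(\mathcal{X}_0) \to \sym(\mathcal{L})$.

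The main task that remains, and the principal obstacle, is to show $N_0' \leq \GaL(2,q)^\Omega$. Given $g \in N_0'$, define $\phi\colon C \to C$ and $\psi\colon C \to C$ by $r_x^g = r_{\phi(x)}$ and $s_x^g = s_{\psi(x)}$; applying $g$ to Proposition~\ref{scheme}(3) and using the same identity on the image forces $\phi \in \aut(C)$ and $\psi(x) = t\phi(x)$ with $t = \psi(e)$. Fix $\alpha_0 \in \Lambda_0 \in \mathcal{L}$, write $g(\alpha_0) = h_0\alpha_0$, and for each $\Lambda \neq \Lambda_0$ let $\beta_\Lambda$ be the unique $s_e$-neighbour of $\alpha_0$ in $\Lambda$ provided by Lemma~\ref{neigh}. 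A short computation shows $g$ acts as $x\alpha_0 \mapsto h_0\phi(x)\alpha_0$ on $\Lambda_0$ and as $x\beta_\Lambda \mapsto h_0^{-1}t\phi(x)\beta_\Lambda$ on each $\Lambda \neq \Lambda_0$. Evaluating $\langle g(\beta_\Lambda), g(\beta_{\Lambda'}) \rangle$ directly and via $\psi$ as $\Lambda, \Lambda'$ range over $\mathcal{L} \setminus \{\Lambda_0\}$, so that $\langle \beta_\Lambda, \beta_{\Lambda'} \rangle$ takes every value in $C$, yields $h_0^{-2}tu = \phi(u)$ for all $u \in C$, which forces $\phi = \mathrm{id}$ and $t = h_0^2$. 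Hence $g = r_{h_0}$, and since $r_{h_0}$ is induced by the scalar matrix $\mu I \in \GL(2,q)$ with $K\mu = h_0$, we conclude $N_0' = \{r_h : h \in C\} \subseteq \GaL(2,q)^\Omega$. Combined with the previous step this gives $f = hg \in \GaL(2,q)^\Omega$, completing the proof.
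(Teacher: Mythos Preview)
Your argument is correct. The first two paragraphs---the inclusion $\GaL(2,q)^\Omega\subseteq\iso(\mathcal{X}_0)$, the preservation of $r_C$ and $\mathcal{L}$, and the bound $\iso(\mathcal{X}_0)^{\mathcal{L}}\leq\PGaL(2,q)$ via the normal subgroup $\aut(\mathcal{X}_0)^{\mathcal{L}}$---coincide with the paper's proof. The treatment of the kernel, however, is genuinely different. The paper shows that the kernel $G_0$ of $\iso(\mathcal{X}_0)\to\sym(\mathcal{L})$ is semiregular by exhibiting a transitive subgroup of $\sym(\Omega)$ that $G_0$ centralizes (this requires Lemma~\ref{autker} and, when $n$ is even, a commutator trick using squares in $\aut(\mathcal{X}_0)$ and the fact that $\SL(2,q)$ is generated by transvections); semiregularity gives $|G_0|\leq n$, and the reverse inequality $G_0\geq Z(\GL(2,q))^{\Omega}\cong C_n$ pins $G_0$ down exactly. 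You instead compute the action of an arbitrary kernel element explicitly: the induced permutations $\phi,\psi$ on the index set $C$ are determined from the multiplication rules of Proposition~\ref{scheme}, and then the compatibility on pairs $(\beta_\Lambda,\beta_{\Lambda'})$ forces $\phi=\id$ and identifies the element with $r_{h_0}$. Your route is more hands-on and entirely avoids Lemma~\ref{autker}; the paper's route is shorter once that lemma is in hand and makes the group-theoretic structure (semiregularity, centralizers) more visible.

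One point you assert without justification is that $\langle\beta_\Lambda,\beta_{\Lambda'}\rangle$ ranges over all of $C$ as $\Lambda,\Lambda'$ vary in $\mathcal{L}\setminus\{\Lambda_0\}$. This is true and follows either from an explicit coordinate check (with $\alpha_0=K(1,0)^T$ one has $\beta_\Lambda=K(b,1)^T$, $b\in\mathbb{F}_q$, and $\langle\beta_\Lambda,\beta_{\Lambda'}\rangle=K(b-b')$ hits every coset) or intrinsically from the intersection numbers: by Proposition~\ref{scheme}(3), $c_{s_e,s_u}^{s_e}=m\geq 1$ for every $u\in C$, so for any fixed $\Lambda$ and any $u$ there is some $\Lambda'$ with $\langle\beta_\Lambda,\beta_{\Lambda'}\rangle=u$. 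It would strengthen the write-up to include one of these justifications. A minor stylistic remark: you reuse the letter $g$ for the kernel element after it has been used throughout the paper for elements of $C$; choosing a different symbol would improve readability.
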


\begin{proof}
The inclusion $\iso(\mathcal{X}_0)\geq \GaL(2,q)^{\Omega}$ follows from Lemma~\ref{actiongl}. Let us prove the reverse inclusion $\iso(\mathcal{X}_0)\leq \GaL(2,q)^{\Omega}$. If $f\in\iso(\mathcal{X}_0)$, then $f$ maps any basic relation of $\mathcal{X}_0$ of valency~$1$ to a basic relation of valency~$1$ and hence preserves the thin radical parabolic $r_C$ of $\mathcal{X}_0$. So $r_C^f=r_C$ for every $f\in \iso(\mathcal{X}_0)$. Therefore $\iso(\mathcal{X}_0)$ acts on $\mathcal{L}$ by Lemma~\ref{isoscheme}(1). One can see that
$$\iso(\mathcal{X}_0)^{\mathcal{L}}\trianglerighteq\aut(\mathcal{X}_0)^{\mathcal{L}}\geq \PSL(2,q),$$
where the last inequality holds by Proposition~\ref{auttatra}. If $q\leq 23$, then one can verify by the computer calculation using~\cite{GAP} and the list of $2$-transitive groups from~\cite[Theorem~5.1]{Cam1} that
\begin{equation}\label{isosubgroup}
\iso(\mathcal{X}_0)^{\mathcal{L}}\leq \PGaL(2,q).
\end{equation} 
Otherwise, the Eq.~\eqref{isosubgroup} holds or $\iso(\mathcal{X}_0)^{\mathcal{L}}$ is isomorphic to one of the groups $A_{q+1}$, $S_{q+1}$ by Lemma~\ref{overgroup}. However, the latter is impossible because $\iso(\mathcal{X}_0)^\mathcal{L}$ has a normal subgroup $\aut(\mathcal{X}_0)^\mathcal{L}$ which is contained in $\PGaL(2,q)$ by Proposition~\ref{auttatra}. Therefore Eq.~\eqref{isosubgroup} holds in any case. 

Clearly, $\GaL(2,q)^\mathcal{L}=\PGaL(2,q)$. As $\iso(\mathcal{X}_0)\geq \GaL(2,q)^{\Omega}$, we conclude that $\iso(\mathcal{X}_0)^\mathcal{L}\geq \GaL(2,q)^\mathcal{L}=\PGaL(2,q)$. Together with Eq.~\eqref{isosubgroup}, this yields that  $\iso(\mathcal{X}_0)^{\mathcal{L}}=\PGaL(2,q)$. Therefore 
\begin{equation}\label{inker}
\iso(\mathcal{X}_0)\leq \GaL(2,q)^{\Omega}G_0,
\end{equation}  
where $G_0$ is the kernel of the action of $\iso(\mathcal{X}_0)$ on $\mathcal{L}$.

Let us show that $G_0$ is semiregular. To do this, it is enough to show that $G_0$ lies in a centralizer of some transitive group. If $n$ is odd, then $|G_0\cap \aut(\mathcal{X}_0)|=1$ by Lemma~\ref{autker} which implies $G_0\leq C_{\sym(\Omega)}(\aut(\mathcal{X}_0))$. We are done because $\aut(\mathcal{X}_0)$ is transitive. If $n$ is even, then $G_0\cap \aut(\mathcal{X}_0)=\langle r_h \rangle$, where $h\in C$ with $|h|=2$, by Lemma~\ref{autker}. So $f_1^{-1}f_2^{-1}f_1f_2\in\langle r_h \rangle$ for all $f_1\in \aut(\mathcal{X}_0)$ and $f_2\in G_0$. Note that $r_h^{f_1}=r_h$ and hence $f_1$ and $r_h$ commute. Together with $r_h^2=r_e=1_{\Omega}$, this implies that
$$f_2^{-1}f_1^2f_2=f_1^2.$$
So $G_0\leq C_{\sym(\Omega)}(F)$, where $F=\langle f^2:~f\in \aut(\mathcal{X}_0)\rangle$. In view of Proposition~\ref{auttatra}, we have $\aut(\mathcal{X}_0)\geq \SL(2,q)$. As $n$ is even, $q$ must be odd. Therefore every transvection $T_{ij}(x)$ is a square of $T_{ij}(x/2)$ and consequently $F\geq \langle T_{ij}(x):~x\in \mathbb{F} \rangle=\SL(2,q)^\Omega$. Thus, $G_0\leq C_{\sym(\Omega)}(\SL(2,q)^\Omega)$ and we prove the desired inclusion for the transitive group $\SL(2,q)^\Omega$.

By the above paragraph, $G_0$ is semiregular. Therefore $|G_0|\leq |\Omega|/|\mathcal{L}|=n$. On the other hand, $G_0\geq Z(\GL(2,q))^\Omega\cong C_n$ because $\iso(\mathcal{X}_0)\geq \GaL(2,q)^{\Omega}$. We conclude that $G_0=Z(\GL(2,q))^\Omega$. In particular, $G_0\leq  \GaL(2,q)^{\Omega}$. Together with Eq.~\eqref{inker}, this yields that $\iso(\mathcal{X}_0)\leq \GaL(2,q)^{\Omega}$. Thus, $\iso(\mathcal{X}_0)=\GaL(2,q)^{\Omega}$ as desired.
\end{proof}

Theorem~\ref{mainautiso} is an immediate consequence of Propositions~\ref{auttatra} and~\ref{isotatra}.

\section{Directed strongly regular graphs}

In this section, we keep the notation from the previous one. Suppose that $n=p$ is a prime such that $p \equiv 3 \mod~4$. Then Conditions~\eqref{maineq} and~\eqref{maineq2} necessary holds and hence one can construct the scheme $\mathcal{X}_0$. 

Let $M$ be a subgroup of $\aut(C)\cong C_{p-1}$ of index~$2$ and $\mathcal{A}=\cyc(K,C)$. As $|\aut(C):M|=2$ and $\aut(C)$ is regular on $C^\#$, the $S$-ring $\mathcal{A}$ has two basic sets distinct from $\{e\}$, say, $C_1$ and $C_2$. Since $p \equiv 3 \mod~4$, we conclude that $|M|=(p-1)/2$ is odd and hence $M$ does not contain the automorphism which inverses every element of~$C$. Therefore $C_1^{(-1)}=C_2$. The Cayley digraphs $\cay(C,C_1)$ and $\cay(C,C_2)$ are Paley tournaments (see~\cite{BCN,BM}). The following well-known lemma immediately follows from the results on cyclotomic numbers modulo~$2$~\cite[Eq.~(19)]{Dickson} (see also~\cite[Theorem~7]{Reich}).

\begin{lemm}\label{paley}
With the above notation,
$$\underline{C_i}^2=\frac{p-3}{4}\underline{C_i}+\frac{p+1}{4}\underline{C_{3-i}},~i\in\{1,2\}.$$
\end{lemm}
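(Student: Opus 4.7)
The plan is to reduce the claim to Dickson's classical cyclotomic numbers of order $2$. Identifying the cyclic group $C$ of order $p$ with $(\mathbb{Z}/p\mathbb{Z},+)$ so that the automorphism $g\mapsto g^k$ becomes multiplication by $k$, the subgroup $M\le\aut(C)$ of index~$2$ corresponds to the group of nonzero quadratic residues modulo $p$, and the $M$-orbits $C_1,C_2$ become the sets of nonzero quadratic residues and of non-residues, respectively. I would expand the product in the group ring as
\[
\underline{C_i}^2=\sum_{c\in C} N_i(c)\,c,\qquad N_i(c)=\bigl|\{(a,b)\in C_i\times C_i:a+b\equiv c\pmod p\}\bigr|,
\]
so everything comes down to evaluating three constants: $N_i(0)$, $N_i(c_1)$ for some $c_1\in C_1$, and $N_i(c_2)$ for some $c_2\in C_2$.

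For the coefficient of $e$ I would use $N_i(0)=|C_i\cap(-C_i)|=|C_i\cap C_i^{(-1)}|$. The excerpt has already observed that $-1\in\aut(C)\setminus M$ (since $|M|=(p-1)/2$ is odd by $p\equiv 3\pmod 4$), so $C_1^{(-1)}=C_2$, which forces $C_i\cap C_i^{(-1)}=\varnothing$ and hence $N_i(0)=0$. For the remaining coefficients I would use that $M$ acts on $C$ by group automorphisms permuting each of $C_1$ and $C_2$ regularly; since $\phi\in M$ sends a pair with sum $c$ to a pair with sum $\phi(c)$, the count $N_i(c)$ depends only on which of $C_1$, $C_2$ contains $c$, so it suffices to evaluate $N_i$ on one representative in each class.

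A direct substitution converts these two counts into the order-$2$ cyclotomic numbers
\[
(i,j)_2=|\{x\in C_{i+1}:\, x+1\in C_{j+1}\}|,
\]
where one uses $-1\in C_2$ to rewrite $c-a$ as $-(a-c)$ and thereby flip the class of the second summand. Dickson's formulas~\cite{Dickson}, for $p\equiv 3\pmod 4$, give $(0,0)_2=(1,0)_2=(1,1)_2=(p-3)/4$ and $(0,1)_2=(p+1)/4$, producing exactly the coefficients $(p-3)/4$ and $(p+1)/4$ in the stated formula. The case $i=2$ then follows from $i=1$ by applying the group-ring involution $\xi\mapsto\xi^{(-1)}$, which swaps $\underline{C_1}$ and $\underline{C_2}$ and commutes with squaring in the abelian ring $\mathbb{Z}C$. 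No genuine obstacle is anticipated; the only real work is the bookkeeping needed to align $C_1, C_2$ with Dickson's indexing.
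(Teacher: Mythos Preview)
Your proposal is correct and follows essentially the same route as the paper: the paper does not give a proof at all, merely stating that the identity ``immediately follows from the results on cyclotomic numbers modulo~$2$'' and citing Dickson's formula~\cite[Eq.~(19)]{Dickson} (and \cite[Theorem~7]{Reich}). Your write-up is a fleshed-out version of exactly that citation, with the extra bookkeeping (the $N_i(0)=0$ computation via $C_i^{(-1)}=C_{3-i}$, the $M$-invariance argument, and the involution trick for $i=2$) made explicit.
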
 

The $S$-ring $\mathcal{A}$ is normal by~\cite[Theorem~4.1]{EP}, i.e. $C_\r$ is normal in $\aut(\mathcal{A})$ or, equivalently, $\aut(\mathcal{A})_e\leq \aut(C)$. Clearly, $\aut(\mathcal{A})_e\geq M$. Since $C$ is cyclic, we obtain $\aut(\mathcal{A})_e=M$ and hence
\begin{equation}\label{autpaley}
\aut(\mathcal{A})=C_\r\rtimes M.
\end{equation}


Let $i\in\{1,2\}$, $g\in C$, and $\Gamma=\Gamma(i,g)$ the digraph with vertex set $\Omega$ and arc set $s_g\cup r_{C_i}$.

\begin{prop}\label{dsrg}
The digraph $\Gamma=\Gamma(i,g)=(\Omega,s_g\cup r_{C_i})$ is a directed strongly regular graph if and only if $q-1=p(p-3)/4$. If the latter is the case, then $\Gamma$ has parameters
$$\left((q+1)p,q+\frac{p-1}{2},q,\frac{q-1}{p},\frac{q-1}{p}+1\right).$$
\end{prop}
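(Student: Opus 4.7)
The plan is to apply the characterization in Lemma~\ref{dsrg0} to the adjacency matrix $A=A_{C_i}+B_g$ of $\Gamma$. The regularity condition $AJ=JA=kJ$ with $k=q+(p-1)/2$ is immediate from Proposition~\ref{scheme}(4), since the valencies of $r_{C_i}$ and $s_g$ are $(p-1)/2$ and $q$ respectively and both $A_{C_i}$ and $B_g$ commute with $J$ (even on each side). Thus the whole content of the claim is the quadratic identity
$$A^2=tI+\lambda A+\mu(J-I-A).$$

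First I would expand $A^2=B_g^2+B_gA_{C_i}+A_{C_i}B_g+A_{C_i}^2$ using Lemma~\ref{aux}. Lemma~\ref{aux}(3) gives $B_g^2=qI+m(J-A_C)$; Lemma~\ref{aux}(2) together with the fact that $C$ is abelian and $C_i^{(-1)}=C_{3-i}$ gives $B_gA_{C_i}=B_{gC_i}$ and $A_{C_i}B_g=B_{gC_{3-i}}$; and Lemma~\ref{aux}(1) together with Lemma~\ref{paley} gives $A_{C_i}^2=\frac{p-3}{4}A_{C_i}+\frac{p+1}{4}A_{C_{3-i}}$. The key simplification I want to exploit is that $gC_i$ and $gC_{3-i}$ are disjoint and their union equals $C\setminus\{g\}$, hence
$$B_gA_{C_i}+A_{C_i}B_g=B_C-B_g=(J-A_C)-B_g,$$
where the second equality uses Eq.~\eqref{all1sum}.

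Next I would collect these contributions and re-expand in the basis $\{I,A_{C_i},A_{C_{3-i}}\}\cup\{B_h:h\in C\}$ of basic adjacency matrices, using $A_C=I+A_{C_i}+A_{C_{3-i}}$ and $J=A_C+B_C$. The calculation should give
$$A^2=qI+\tfrac{p-3}{4}A_{C_i}+\tfrac{p+1}{4}A_{C_{3-i}}+mB_g+(m+1)\!\!\sum_{h\in C,\,h\neq g}\!\!B_h.$$
On the other hand, expanding $tI+\lambda A+\mu(J-I-A)$ in the same basis gives $tI+\lambda A_{C_i}+\lambda B_g+\mu A_{C_{3-i}}+\mu\sum_{h\neq g}B_h$.

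Finally, comparing coefficients in the (linearly independent) basis of basic adjacency matrices yields the system $t=q$, $\lambda=\frac{p-3}{4}=m$, $\mu=\frac{p+1}{4}=m+1$, and all four conditions are equivalent to the single identity $m=\frac{p-3}{4}$, i.e.\ $q-1=p(p-3)/4$. This proves both directions (necessity from the independence of the basic adjacency matrices, sufficiency by direct substitution), and reading off $t=q$, $\lambda=(q-1)/p$, $\mu=(q-1)/p+1$, together with $v=(q+1)p$ and $k=q+(p-1)/2$, gives the stated parameters. The only delicate point in the whole argument is the set-theoretic identification $gC_i\sqcup gC_{3-i}=C\setminus\{g\}$ that collapses the two mixed terms $B_gA_{C_i}+A_{C_i}B_g$ into an expression lying in $\Span\{I,J,A_C,B_g\}$; once that is in place the rest is bookkeeping.
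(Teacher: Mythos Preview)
Your proposal is correct and follows essentially the same approach as the paper's proof: both apply Lemma~\ref{dsrg0}, expand $A^2=B_g^2+B_gA_{C_i}+A_{C_i}B_g+A_{C_i}^2$ via Lemma~\ref{aux} and Lemma~\ref{paley}, collapse the cross terms to $\sum_{h\neq g}B_h$ using $gC_i\sqcup gC_{3-i}=C\setminus\{g\}$, and then compare coefficients in the basis of basic adjacency matrices to obtain the single constraint $m=(p-3)/4$. The only cosmetic difference is that the paper rewrites $A_{C_i}B_g$ as $B_gA_{C_{3-i}}$ to factor the cross terms as $B_g(A_{C_i}+A_{C_{3-i}})$, whereas you compute each cross term separately; the outcome and the logic are identical.
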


\begin{proof}
Let $A=A(\Gamma)$ be the adjacency matrix of $\Gamma$. To prove the proposition, it is enough to verify that $A$ satisfies the equalities from Statements~$(1)$ and~$(2)$ of Lemma~\ref{dsrg0}. Note that $s_g\cup r_{C_i}$ is a regular binary relation because it is union of basic relations of the scheme $\mathcal{X}$. Moreover,
$$n_{s_g\cup r_{C_i}}=n_{s_g}+n_{r_{C_i}}=n_{s_g}+\sum \limits_{h\in C_i} n_{r_h}=q+\frac{p-1}{2}$$
be Proposition~\ref{scheme}(4). Therefore
$$AJ=JA=\left(q+\frac{p-1}{2}\right)J$$
and consequently the equality from Statement~$(1)$ of Lemma~\ref{dsrg0} holds for $A$.

Let us compute $A^2$. Clearly, $A=A(s_g)+A(r_{C_i})=B_g+A_{C_i}$. So
\begin{equation}\label{eq0}
A^2=(B_g+A_{C_i})^2=B_g^2+B_gA_{C_i}+A_{C_i}B_g+A_{C_i}^2.
\end{equation}
Proposition~\ref{scheme}(3) implies that
\begin{equation}\label{eq1}
B_g^2=qA_{1}+m(J-A_C)=qA_e+\frac{q-1}{p}\left(\sum\limits_{h\in C} B_h\right).
\end{equation}
One can see that
\begin{equation}\label{eq2}
B_gA_{C_i}+A_{C_i}B_g=B_g(A_{C_i}+A_{C_{3-i}})=B_g\left(\sum \limits_{h\in C^\#} A_h\right)=\sum \limits_{h\in C^\#}B_{gh}=\sum \limits_{h\in C\setminus\{g\}}B_{h},
\end{equation}
where the first equality follows from Lemma~\ref{aux}(2) and the equality $C_i^{(-1)}=C_{3-i}$, the second one from the equality $C_i\cup C_i^{(-1)}=C^\#$, and the third one from Proposition~\ref{scheme}(2). Finally,
\begin{equation}\label{eq3}
A_{C_i}^2=A_{\underline{C_i}^2}=A_{\frac{p-3}{4}\underline{C_i}+\frac{p+1}{4}\underline{C_{3-i}}}=\frac{p-3}{4}A_{C_i}+\frac{p+1}{4}A_{C_{3-i}},
\end{equation}
where the the first equality follows from Lemma~\ref{aux}(1), whereas the second one from Lemma~\ref{paley}. Substituting the expressions for $B_g^2$, $B_gA_{C_i}+A_{C_i}B_g$, and $A_{C_i}^2$ from Eqs.~\eqref{eq1},~\eqref{eq2}, and~\eqref{eq3}, respectively, to Eq.~\eqref{eq0}, we obtain
$$A^2=qA_e+\frac{q-1}{p}\left(\sum\limits_{h\in C} B_h\right)+\sum \limits_{h\in C\setminus\{g\}}B_{h}+\frac{p-3}{4}A_{C_i}+\frac{p+1}{4}A_{C_{3-i}}=$$
$$=qA_e+\frac{q-1}{p}B_g+\frac{p-3}{4}A_{C_i}+(\frac{q-1}{p}+1)\left(\sum \limits_{h\in C\setminus\{g\}}B_{h}\right)+\frac{p+1}{4}A_{C_{3-i}}.$$
The above equality implies that the equality from Statement~$(2)$ of Lemma~\ref{dsrg0} holds for $A$ if and only if $\frac{q-1}{p}=\frac{p-3}{4}$ or, equivalently, $q-1=p(p-3)/4$, and if the latter is the case, then $\Gamma$ has the required in the proposition parameters.
\end{proof}

\begin{prop}\label{isodsrg}
Given $i\in\{1,2\}$, the digraphs $\Gamma(i,h)$ and $\Gamma(i,g)$ are isomorphic for all~$h,g\in C$.
\end{prop}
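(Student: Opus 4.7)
The plan is to mimic the proof of Corollary~\ref{isodrg}, exploiting the fact that elements of $\GL(2,q)$ with determinant in an arbitrary coset of $K$ do the right thing to both the $s$-relations and the thin basic relations.

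More concretely, I would pick any $T\in \GL(2,q)$ with $\det(T)\in gh^{-1}$ (viewed as a coset in $C=\mathbb{F}^*/K$); this is possible because the determinant map $\GL(2,q)\to \mathbb{F}^*$ is surjective. Let $f$ be the permutation of $\Omega$ induced by $T$, i.e.\ $f=T^\Omega$. Applying Lemma~\ref{actiongl} with $\sigma=\id$, one gets $r_{g'}^f=r_{g'}$ for every $g'\in C$, and in particular
\[
r_{C_i}^f=\bigcup_{g'\in C_i} r_{g'}^f=\bigcup_{g'\in C_i} r_{g'}=r_{C_i}.
\]
Applying the same lemma to $s$-relations yields $s_h^f=s_{\det(T)h}=s_{gh^{-1}\cdot h}=s_g$.

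Combining these two equalities gives $(s_h\cup r_{C_i})^f=s_g\cup r_{C_i}$, so $f$ is an isomorphism from $\Gamma(i,h)$ to $\Gamma(i,g)$.

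There is essentially no obstacle here: the only ingredient beyond Lemma~\ref{actiongl} is the surjectivity of $\det:\GL(2,q)\to\mathbb{F}^*$, and the argument is a direct generalization of Corollary~\ref{isodrg} (which is the case $h=e$).
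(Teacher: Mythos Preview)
Your proof is correct and follows essentially the same approach as the paper: both use Lemma~\ref{actiongl} with a suitably chosen $T\in\GL(2,q)$ to exhibit an explicit isomorphism. The only cosmetic difference is that the paper first reduces to the case $h=e$ and then takes $\det(T)\in g$, whereas you go directly from $h$ to $g$ by taking $\det(T)\in gh^{-1}$.
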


\begin{proof}
To prove the lemma, it is enough to show that $\Gamma(i,e)$ and $\Gamma(i,g)$ are isomorphic for every $g\in C^\#$. Let $T\in \GL(2,q)$ such that $\det(T)\in g$ and $f_T\in \sym(\Omega)$ the permutation $\alpha\mapsto T\alpha$, $\alpha \in \Omega$, induced by $T$ on $\Omega$. Then $r_{C_i}^{f_T}=r_{C_i}$ and $s_e^{f_T}=s_{K\det(T)}=s_g$ by Lemma~\ref{actiongl} and hence $f_T$ is an isomorphism from $\Gamma(i,e)$ to $\Gamma(i,g)$.
\end{proof}

Further, we are going to prove that if $\Gamma(1,h)$ and $\Gamma(2,g)$, $h,g\in C$, are DSRGs, then they are nonisomorphic. The key step towards this goal is the following statement.

\begin{prop}\label{isonorm}
Let $f\in \sym(\Omega)$ such that $s_g^f=s_g$ for some $g\in C$ and $\{r_{C_1}^f,r_{C_2}^f\}=\{r_{C_1},r_{C_2}\}$. Then $f\in \iso(\mathcal{X}_0)$.
\end{prop}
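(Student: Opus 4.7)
The plan is to show that $f$ sends each basic relation of $\mathcal{X}_0$ to another basic relation, which is equivalent to $f\in\iso(\mathcal{X}_0)$. The argument has three stages: a reduction to the action of $f$ on each block $\Lambda\in\mathcal{L}$; a local analysis placing $f|_\Lambda$ in the holomorph $\Hol(C)$; and a global synchronization step in which the hypothesis $s_g^f=s_g$ is used to fix a single automorphism of $C$ that describes $f$ on every block.

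For the reduction, note that $r_e=1_\Omega$ is preserved by every permutation and $r_{C_1}\cup r_{C_2}=r_{C^\#}$ is preserved by hypothesis, so the parabolic $r_C$ is $f$-invariant. By Lemma~\ref{isoscheme}, $f$ induces a permutation of $\mathcal{L}$ and, for every $\Lambda\in\mathcal{L}$ and $\alpha\in\Lambda$, the map $\phi_{\Lambda,\alpha}:=f_{\Lambda,\alpha}$ is a well-defined permutation of $C$. Identifying $\Lambda$ with $C$ via $h\mapsto h\alpha$ and $\Lambda^f$ with $C$ via $h\mapsto h\alpha^f$, the restriction of $r_{C_i}$ becomes the Paley-type Cayley digraph $\cay(C,C_i)$, so the hypothesis $r_{C_1}^f\in\{r_{C_1},r_{C_2}\}$ forces $\phi_{\Lambda,\alpha}$ to be an isomorphism either from $\cay(C,C_1)$ to itself or from $\cay(C,C_1)$ to $\cay(C,C_2)$. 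By Eq.~\eqref{autpaley}, $\aut(\cay(C,C_1))=C_\r\rtimes M$, and since the inversion $\iota$ of $C$ lies in $\aut(C)\setminus M$ (because $-1$ is a non-square mod $p$ as $p\equiv 3\mod 4$) and realizes an isomorphism from $\cay(C,C_1)$ to $\cay(C,C_2)$, the union of both sets of isomorphisms equals $\Hol(C)=C_\r\rtimes\aut(C)$. Hence
\[
\phi_{\Lambda,\alpha}(k)=\nu_{\Lambda,\alpha}(k)\cdot h_{\Lambda,\alpha},\qquad \nu_{\Lambda,\alpha}\in\aut(C),\ h_{\Lambda,\alpha}\in C.
\]
A short calculation using the identity $\phi_{\Lambda,k_0\alpha}(k)=\phi_{\Lambda,\alpha}(kk_0)\phi_{\Lambda,\alpha}(k_0)^{-1}$ together with the abelianness of $C$ shows that the angular part $\nu_\Lambda:=\nu_{\Lambda,\alpha}$ depends only on $\Lambda$.

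The crux, and the only step that uses $s_g^f=s_g$, is to prove that $\nu_\Lambda$ is independent of $\Lambda$. Given blocks $\Lambda_1,\Lambda_2\in\mathcal{L}$, Lemma~\ref{neigh} furnishes $\alpha_i\in\Lambda_i$ with $\langle\alpha_1,\alpha_2\rangle=g$; since $s_g$ is $f$-invariant, $\langle\alpha_1^f,\alpha_2^f\rangle=g$ as well, so the identifications of $\Lambda_i^f$ via $\alpha_i^f$ are compatible. The bilinearity $\langle k_1\alpha_1,k_2\alpha_2\rangle=k_1k_2\langle\alpha_1,\alpha_2\rangle$ implies that, in both encodings, the $s_g$-arcs from $\Lambda_1$ to $\Lambda_2$ become $\{(k,k^{-1}):k\in C\}$. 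Comparing images under $f$ yields $\phi_2(k^{-1})=\phi_1(k)^{-1}$ for all $k\in C$, where $\phi_i=\phi_{\Lambda_i,\alpha_i}$. Substituting $\phi_i(k)=\nu_i(k)h_i$ and using commutativity of $C$, the case $k=e$ gives $h_2=h_1^{-1}$, whence $\nu_2(k)^{-1}=h_1^{-1}\nu_1(k)^{-1}h_1=\nu_1(k)^{-1}$, so $\nu_2=\nu_1$. Consequently a single $\nu\in\aut(C)$ satisfies $\nu_\Lambda=\nu$ for every $\Lambda\in\mathcal{L}$.

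To conclude, with $\nu_\Lambda\equiv\nu$ the restriction of $r_h^f$ to every block $\Lambda^f$ is left multiplication by $\nu(h)$, hence $r_h^f=r_{\nu(h)}$ is a basic relation for each $h\in C$. From Proposition~\ref{scheme}(2) we have the relational identity $s_h=r_{gh^{-1}}\circ s_g$, so
\[
s_h^f=r_{gh^{-1}}^f\circ s_g=r_{\nu(g)\nu(h)^{-1}}\circ s_g=s_{\nu(h)\nu(g)^{-1}g},
\]
again a basic relation. Thus $f$ permutes the basic relations of $\mathcal{X}_0$ and $f\in\iso(\mathcal{X}_0)$. The main obstacle is the global synchronization of the local maps $\phi_\Lambda$; every other stage is either structural bookkeeping or a routine consequence of the classification of Paley tournament isomorphisms.
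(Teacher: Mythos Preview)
Your argument is correct and follows essentially the same route as the paper's proof: both reduce to Lemma~\ref{isoscheme}, use the Paley-tournament structure together with Eq.~\eqref{autpaley} to place each $f_{\Lambda,\alpha}$ in $\Hol(C)$ (indeed in $\aut(C)$, since $f_{\Lambda,\alpha}(e)=e$ forces your $h_{\Lambda,\alpha}=e$), and then exploit the hypothesis $s_g^f=s_g$ to synchronize the local automorphisms to a single $\nu\in\aut(C)$. The one notable difference is your final step: instead of computing $\langle\alpha_1^f,\alpha_2^f\rangle$ directly via an analogue of the paper's Lemma~\ref{iso3}, you use the relational identity $s_h=r_{gh^{-1}}\circ s_g$ from Proposition~\ref{scheme}(2) to transport the already-established $r_h^f=r_{\nu(h)}$ to the $s_h$'s, which is a clean shortcut.
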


\begin{proof}
We may assume that $g=e$. Indeed, otherwise let $T\in \GL(2,q)$ such that $\det(T)\in g$ and $f_T$ the permutation induced by $T$ on $\Omega$. Then $s_e^{f_Tff_T^{-1}}=s_g^{ff_T^{-1}}=s_g^{f_T^{-1}}=s_e$, where the first and third equalities hold by Lemma~\ref{actiongl}. Due to Proposition~\ref{isotatra}, we have $f_T\in \iso(\mathcal{X}_0)$ and hence $f\in \iso(\mathcal{X}_0)$ if and only if $f_Tff_T^{-1}\in \iso(\mathcal{X}_0)$.

The condition $\{r_{C_1}^f,r_{C_2}^f\}=\{r_{C_1},r_{C_2}\}$ implies that $r_C^f=r_C$. So Lemma~\ref{isoscheme} holds for $f$, i.e. $f$ induces a permutation on $\mathcal{L}$ and for all $\Lambda\in \mathcal{L}$ and $\alpha\in \Lambda$, the mapping $f_{\Lambda,\alpha}:g\rightarrow g^{\prime}$ such that $(g\alpha)^f=g^{\prime}\alpha^f$ is a permutation on $C$. Further, we prove three lemmas on properties of $f_{\Lambda,\alpha}$'s required for the proof of the proposition.

\begin{lemm}\label{iso1}
With the above notation, $f_{\Lambda,\alpha}\in \aut(C)$ for all $\Lambda\in \mathcal{L}$ and $\alpha\in \Lambda$.
\end{lemm}

\begin{proof}
Let $\Lambda\in \mathcal{L}$ and $\alpha\in \Lambda$. Since $\{r_{C_1}^f,r_{C_2}^f\}=\{r_{C_1},r_{C_2}\}$, for each $i\in \{1,2\}$ there exists $j\in \{1,2\}$ such that $r_{C_i}^f=r_{C_j}$. Therefore
$$gh^{-1}\in C_i\Leftrightarrow (h\alpha,g\alpha)\in r_{C_i}\Leftrightarrow ((h\alpha)^f,(g\alpha)^f)\in r_{C_i}^f\Leftrightarrow $$
$$\Leftrightarrow (h^{f_{\Lambda,\alpha}}\alpha^f,g^{f_{\Lambda,\alpha}}\alpha^f)\in r_{C_j}\Leftrightarrow g^{f_{\Lambda,\alpha}}(h^{f_{\Lambda,\alpha}})^{-1}\in C_j$$
for all $h,g\in C$. So $\{\cay(G,C_1)^{f_{\Lambda,\alpha}},\cay(G,C_2)^{f_{\Lambda,\alpha}}\}=\{\cay(G,C_1),\cay(G,C_2)\}$. Together with $C_2=C_1^{(-1)}$, this implies that $f_{\Lambda,\alpha}\in \aut(\mathcal{A})$ or $f_{\Lambda,\alpha}\sigma_0\in \aut(\mathcal{A})$, where $\sigma_0\in \aut(C)$ such that $g^{\sigma_0}=g^{-1}$ for every $g\in C$. In both cases, $f_{\Lambda,\alpha}\in \Hol(C)$ by Eq.~\eqref{autpaley}. Since $\alpha^f=e^{f_{\Lambda,\alpha}}\alpha^f$, we conclude that $e^{f_{\Lambda,\alpha}}=e$. Thus, $f_{\Lambda,\alpha}\in \aut(C)$.
\end{proof}

Let $\Lambda_0\in \mathcal{L}$ and $\alpha_0\in \Lambda_0$. From Lemma~\ref{neigh} it follows that every $\Lambda\in \mathcal{L}$ with $\Lambda\neq \Lambda_0$ contains a unique $\alpha_\Lambda$ such that $(\alpha_0,\alpha_\Lambda)\in s_e$. 

\begin{lemm}\label{iso2}
With the above notation, $f_{\Lambda,\alpha_\Lambda}=f_{\Lambda_0,\alpha_0}$ for every $\Lambda\in \mathcal{L}$.
\end{lemm}

\begin{proof}
Let $\alpha=\alpha_\Lambda$. One can verify using the condition $s_e^f=s_e$ and $(\alpha_0,\alpha)\in s_e$ that
$$g_0g=e\Leftrightarrow g_0g\langle \alpha_0,\alpha \rangle=e\Leftrightarrow \langle g_0\alpha_0,g\alpha \rangle=e \Leftrightarrow$$ 
$$\Leftrightarrow (g_0\alpha_0,g\alpha)\in s_e\Leftrightarrow ((g_0\alpha_0)^f,(g\alpha)^f)\in s_e^f=s_e\Leftrightarrow$$
$$\Leftrightarrow\langle (g_0\alpha_0)^f,(g\alpha)^f \rangle=e\Leftrightarrow g_0^{f_{\Lambda_0,\alpha_0}}g^{f_{\Lambda,\alpha_\Lambda}}\langle \alpha_0^f,\alpha^f \rangle=e\Leftrightarrow g_0^{f_{\Lambda_0,\alpha_0}}g^{f_{\Lambda,\alpha_\Lambda}}=e$$
for all $g_0,g\in C$. By Lemma~\ref{iso1}, we have $f_{\Lambda_0,\alpha_0},f_{\Lambda,\alpha_\Lambda}\in \aut(C)$ and hence the above equality implies that 
$$g^{f_{\Lambda,\alpha_\Lambda}}=(g_0^{f_{\Lambda_0,\alpha_0}})^{-1}=(g_0^{-1})^{f_{\Lambda_0,\alpha_0}}=g^{f_{\Lambda_0,\alpha_0}}$$
for all $g_0,g\in C$. Thus, $f_{\Lambda,\alpha_\Lambda}=f_{\Lambda_0,\alpha_0}$. 
\end{proof}

Put $\varphi=f_{\Lambda_0,\alpha_0}$.

\begin{lemm}\label{iso3}
With the above notation, $\langle \alpha_{\Lambda_1}^f,\alpha_{\Lambda_2}^f \rangle=\langle \alpha_{\Lambda_1},\alpha_{\Lambda_2} \rangle^\varphi$ for all $\Lambda_1,\Lambda_2\in \mathcal{L}$.
\end{lemm}

\begin{proof}
One can verify straightforwardly that
$$g\langle \alpha_{\Lambda_1},\alpha_{\Lambda_2} \rangle=e\Leftrightarrow \langle g\alpha_{\Lambda_1},\alpha_{\Lambda_2} \rangle=e\Leftrightarrow (g\alpha_{\Lambda_1},\alpha_{\Lambda_2})\in s_e\Leftrightarrow((g\alpha_{\Lambda_1})^f,\alpha_{\Lambda_2}^f)\in s_e^f=s_e \Leftrightarrow$$
$$\Leftrightarrow \langle (g\alpha_{\Lambda_1})^f,\alpha_{\Lambda_2}^f \rangle=e\Leftrightarrow g^\varphi \langle \alpha_{\Lambda_1}^f,\alpha_{\Lambda_2}^f \rangle=e$$
for every $g\in C$, where the third equivalence follows from the condition $s_e^f=s_e$, whereas the fifth one from Lemma~\ref{iso2}. Therefore
$$\langle \alpha_{\Lambda_1}^f,\alpha_{\Lambda_2}^f \rangle=(g^\varphi)^{-1}=(g^{-1})^\varphi=\langle \alpha_{\Lambda_1},\alpha_{\Lambda_2} \rangle^\varphi,$$
where the second equality holds by Lemma~\ref{iso1}.
\end{proof}

Now let us return to the proof of the proposition. To prove the proposition, it is enough to show that $r_g^f=r_{g^{\varphi}}$ and $s_g^f=s_{g^{\varphi}}$ for every $g\in C$. Let $\alpha_1,\alpha_2\in \Omega$, $\Lambda_1,\Lambda_2\in \mathcal{L}$ such that $\alpha_1\in \Lambda_1$ and $\alpha_2\in \Lambda_2$, and $g_1,g_2\in C$ such that $\alpha_1=g_1\alpha_{\Lambda_1}$ and $\alpha_2=g_2\alpha_{\Lambda_2}$. It follows that
$$(\alpha_1,\alpha_2)\in r_g \Leftrightarrow \alpha_2=g\alpha_1\Leftrightarrow (\alpha_2)^f=(g\alpha_1)^f\Leftrightarrow (\alpha_2)^f=g^\varphi \alpha_1^f\Leftrightarrow (\alpha_1^f,\alpha_2^f)\in r_{g^\varphi},$$
where the third equivalence holds by Lemma~\ref{iso2}. So $r_g^f=r_{g^{\varphi}}$. Using Lemmas~\ref{iso1}-\ref{iso3}, one can verify that
$$(\alpha_1,\alpha_2)\in s_g \Leftrightarrow \langle \alpha_1,\alpha_2 \rangle=g \Leftrightarrow \langle g_1\alpha_{\Lambda_1}, g_2\alpha_{\Lambda_2}\rangle=g\Leftrightarrow g_1g_2\langle\alpha_{\Lambda_1},\alpha_{\Lambda_2}\rangle=g\Leftrightarrow$$
$$\Leftrightarrow g_1^\varphi g_2^\varphi\langle\alpha_{\Lambda_1},\alpha_{\Lambda_2}\rangle^\varphi=g^\varphi\Leftrightarrow g_1^{f_{\Lambda_1,\alpha_{\Lambda_1}}} g_2^{f_{\Lambda_2,\alpha_{\Lambda_2}}}\langle\alpha_{\Lambda_1}^f,\alpha_{\Lambda_2}^f\rangle=g^\varphi\Leftrightarrow $$
$$\Leftrightarrow \langle (g_1\alpha_{\Lambda_1})^f,(g_2\alpha_{\Lambda_2})^f\rangle=g^\varphi\Leftrightarrow \langle \alpha_1^f,\alpha_2^f\rangle=g^\varphi\Leftrightarrow (\alpha_1^f,\alpha_2^f)\in s_{g^\varphi}$$
and hence $s_g^f=s_{g^{\varphi}}$ as desired.
\end{proof}

Recall that $q=r^d$ for a prime $r$ and $d\geq 1$ and $\Sigma=\aut(\mathbb{F})$.  

\begin{lemm}\label{sigmaact}
If $d$ is odd, then $\Sigma^C\leq M$. 
\end{lemm}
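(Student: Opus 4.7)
The plan is to compute the action of a Frobenius generator of $\Sigma$ on $C$ explicitly and then use the fact that $p \equiv 3 \pmod 4$ to conclude that membership in $M$ is equivalent to having odd order in $\aut(C)$.

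First, I would identify $C$ concretely. Since $K$ is the unique subgroup of $\mathbb{F}^*$ of index $p$, choosing a generator $\alpha$ of $\mathbb{F}^*$ we have $K = \langle \alpha^p\rangle$, and $\bar\alpha := K\alpha$ generates $C \cong C_p$. If $\sigma \in \Sigma$ denotes the Frobenius automorphism $x \mapsto x^r$, then $\bar\alpha^\sigma = K(\alpha^r) = \bar\alpha^r$. Thus, under the canonical isomorphism $\aut(C) \cong (\mathbb{Z}/p)^*$, the element $\sigma^C$ corresponds to $r \bmod p$, and so $\Sigma^C = \langle r \bmod p\rangle$.

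Next, since $p \mid q-1 = r^d - 1$, the order of $r$ in $(\mathbb{Z}/p)^*$ divides $d$. If $d$ is odd, this order is odd as well. Now I would use the hypothesis $p \equiv 3 \pmod 4$, which gives $(p-1)/2$ odd. Hence $(\mathbb{Z}/p)^*$ is cyclic of order $2m$ with $m = (p-1)/2$ odd, so its unique subgroup of index~$2$ is precisely the unique subgroup of odd order~$m$, and an element lies in it if and only if its order is odd. Identifying $M$ with this subgroup, we obtain $r \bmod p \in M$, whence $\Sigma^C = \langle r \bmod p\rangle \leq M$.

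There is no substantial obstacle: the only point requiring a little care is the explicit identification of $\sigma^C$ with the residue $r \bmod p$ (to confirm that the Frobenius really does act on $C$ as the $r$-th power map, not merely on a distinguished generator), and the clean characterisation of $M$ as the odd-order subgroup of $\aut(C)$, which is where the congruence $p \equiv 3 \pmod 4$ is used.
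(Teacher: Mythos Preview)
Your proof is correct and follows essentially the same approach as the paper: both argue that $\sigma^C$ has order dividing $d$, hence odd, and that $M$---being the unique index-$2$ subgroup of the cyclic group $\aut(C)$ of order $p-1\equiv 2\pmod 4$---consists exactly of the elements of odd order. The only difference is cosmetic: you compute the action of the Frobenius on $C$ explicitly as the $r$-th power map (identifying $\sigma^C$ with $r\bmod p$), whereas the paper simply invokes $|\sigma^C|\mid|\sigma|\mid d$ and the fact that $M$ is the Hall $2'$-subgroup of $\aut(C)$.
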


\begin{proof}
Let $\sigma\in \Sigma$. Clearly, $\sigma^C\in \aut(C)$. Note that $|\sigma|$ divides $|\Sigma|=d$ which is odd. We conclude that $|\sigma^C|$ is odd and hence $\sigma^C\in M$ because $M$ is a Hall $2^\prime$-subgroup of $\aut(C)$. Thus, $\Sigma^C\leq M$.
\end{proof}

\begin{prop}\label{autdsrg}
If $d$ is odd, then $\aut(\Gamma(i,e))=(\GL(2,q)_K\rtimes \Sigma)^\Omega$ for each $i\in \{1,2\}$.
\end{prop}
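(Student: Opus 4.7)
The plan is to prove both inclusions, using as the crucial tool the symmetric/asymmetric decomposition of the arc set of $\Gamma(i,e)$ together with Proposition~\ref{isonorm} and Proposition~\ref{isotatra}.

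For the inclusion $\aut(\Gamma(i,e)) \supseteq (\GL(2,q)_K \rtimes \Sigma)^\Omega$, I would take $f = (T\sigma)^\Omega$ with $T \in \GL(2,q)_K$ and $\sigma \in \Sigma$ and apply Lemma~\ref{actiongl}. Since $\det(T) \in K$, the image of $\det(T)$ in $C$ is~$e$, so $s_e^f = s_{\det(T)e^\sigma} = s_e$. For the other summand, note that when $d$ is odd Lemma~\ref{sigmaact} gives $\sigma^C \in M$, and because $C_i$ is an $M$-orbit, we have $C_i^\sigma = C_i$; thus $r_{C_i}^f = r_{C_i^\sigma} = r_{C_i}$. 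Hence the full arc set $s_e \cup r_{C_i}$ is preserved. (The semidirect product is well defined because $K$ is the unique subgroup of the cyclic group $\mathbb{F}^*$ of index~$p$, hence $\Sigma$-invariant.)

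For the reverse inclusion, the key observation is that the arcs of $\Gamma(i,e)$ split intrinsically into a symmetric and an asymmetric part. By Proposition~\ref{scheme}(5), $s_e^* = s_e$, while $r_{C_i}^* = r_{C_i^{(-1)}} = r_{C_{3-i}}$, which is disjoint from $r_{C_i}$ because $p \equiv 3 \bmod 4$ implies $C_i \neq C_{3-i}$. Therefore, given an arc $(\alpha,\beta)$ of $\Gamma(i,e)$, the reverse arc $(\beta,\alpha)$ is again an arc of $\Gamma(i,e)$ if and only if $(\alpha,\beta) \in s_e$. Consequently, every $f \in \aut(\Gamma(i,e))$ satisfies $s_e^f = s_e$ and $r_{C_i}^f = r_{C_i}$.

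Having this, I would invoke Proposition~\ref{isonorm} to conclude $f \in \iso(\mathcal{X}_0)$, and then Proposition~\ref{isotatra} to write $f = (T\sigma)^\Omega$ for some $T \in \GL(2,q)$ and $\sigma \in \Sigma$. Applying Lemma~\ref{actiongl} to the identity $s_e^f = s_e$ yields $s_{\det(T)\,e^\sigma} = s_e$, which forces $\det(T) \in K$, so $T \in \GL(2,q)_K$. Thus $f \in (\GL(2,q)_K \rtimes \Sigma)^\Omega$, completing the proof. I do not anticipate a serious obstacle: the entire argument reduces to the symmetric/asymmetric dichotomy (trivial to verify once noticed) and then feeds mechanically into the previously established descriptions of $\iso(\mathcal{X}_0)$ and $\aut(\mathcal{X}_0)$. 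The only point requiring the hypothesis that $d$ is odd is the direct inclusion, where Lemma~\ref{sigmaact} is used to ensure $\Sigma$ stabilises $C_i$.
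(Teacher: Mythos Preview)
Your proposal is correct and follows essentially the same approach as the paper: the symmetric/asymmetric decomposition of the arc set (yielding $s_e^f=s_e$ and $r_{C_i}^f=r_{C_i}$), then Proposition~\ref{isonorm}, Proposition~\ref{isotatra}, and Lemma~\ref{actiongl}. The only organizational difference is that the paper first establishes $\aut(\Gamma)\leq\GaL(2,q)^\Omega$ and then computes $\GaL(2,q)^\Omega\cap\aut(\Gamma)$ in one step (obtaining both inclusions simultaneously), whereas you treat the two inclusions separately; the content is the same.
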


\begin{proof}
Let $\Gamma=\Gamma(i,e)$. Clearly, $(s_e\cup r_{C_i})^f=s_e\cup r_{C_i}$ and hence 
$$(s_{C^\#}\cup r_{C_j})^f=(\Omega^2\setminus (1_\Omega \cup s_e\cup r_{C_i}))^f=\Omega^2\setminus (1_\Omega \cup s_e\cup r_{C_i})=s_{C^\#}\cup r_{C_j}$$
for every $f\in \aut(\Gamma)$, where $j\in \{1,2\}$ and $j\neq i$. As $s_e$ and $s_{C^\#}$ are symmetric, whereas $r_{C_1}$ and $r_{C_2}$ are nonsymmetric, we obtain $s_e^f=s_e$, $r_{C_1}^f=r_{C_1}$, and $r_{C_2}^f=r_{C_2}$. So $f$ satisfies the conditions of Proposition~\ref{isonorm} and consequently $f\in \iso(\mathcal{X}_0)$. Together with Proposition~\ref{isotatra}, this yields that 
$$\aut(\Gamma)\leq \iso(\mathcal{X}_0)=\GaL(2,q)^\Omega.$$

Let $f\in \GaL(2,q)^\Omega$. Then $f=(T\sigma)^\Omega$, where $T\in \GL(2,q)$ and $\sigma\in \Sigma$. From Lemma~\ref{actiongl} it follows that $s_e^f=s_{K\det(T)}$ and $r_{C_i}^f=r_{C_i^\sigma}$. The equality $s_e^f=s_e$ holds if and only if $\det(T)\in K$ and the equality $r_{C_i}^f=r_{C_i}$ holds by Lemma~\ref{sigmaact}. Thus, $\GaL(2,q)^\Omega\cap \aut(\Gamma)=(\GL(2,q)_K\rtimes \Sigma)^\Omega$. Together with $\aut(\Gamma)\leq \GaL(2,q)^\Omega$, this yields the required.
\end{proof}

\begin{rem2}
Let $g\in C$ and $T\in \GL(2,q)$ such that $\det(T)\in g$. Then the permutation $f_T$ induced by $T$ on $\Omega$ is an isomorphism from $\Gamma(e,i)$ to $\Gamma(g,i)$ for each $i\in\{1,2\}$ (Proposition~\ref{isodsrg}). Therefore $\aut(\Gamma(g,i))=f_T^{-1}\aut(\Gamma(e,i))f_T=f_T^{-1}(\GL(2,q)_K\rtimes \Sigma)^\Omega f_T$, where the latter equality holds by Proposition~\ref{autdsrg}.
\end{rem2}

Eqs.~\eqref{fieldker} and~\eqref{isopgl}, Proposition~\ref{autdsrg}, and Remark~2 imply the following statement.

\begin{corl}\label{autdsrgcorl}
If $d$ is odd, then $\aut(\Gamma(i,g))\cong \PGaL(2,q)$ for all $i\in \{1,2\}$ and $g\in C$.
\end{corl}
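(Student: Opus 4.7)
The plan is to combine Proposition~\ref{autdsrg}, Remark~2, and the two referenced equations to identify $\aut(\Gamma(i,g))$ abstractly.

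First, by Remark~2, for any $g\in C$ and $i\in\{1,2\}$, the group $\aut(\Gamma(i,g))$ is the $\sym(\Omega)$-conjugate $f_T^{-1}\aut(\Gamma(i,e))f_T$, where $T\in\GL(2,q)$ with $\det(T)\in g$. Hence $\aut(\Gamma(i,g))\cong \aut(\Gamma(i,e))$, and it suffices to prove the claim for $g=e$. By Proposition~\ref{autdsrg}, $\aut(\Gamma(i,e))=(\GL(2,q)_K\rtimes \Sigma)^\Omega$, so the whole problem reduces to showing
\[
(\GL(2,q)_K\rtimes \Sigma)^\Omega\cong \PGaL(2,q).
\]

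Next I would verify that $\Sigma$ actually normalises $\GL(2,q)_K$ in the relevant action, so the semidirect product is well-defined: since $n=p$ is a prime divisor of $q-1$ and $\mathbb{F}^*$ is cyclic, $K$ is the \emph{unique} subgroup of $\mathbb{F}^*$ of index $p$, hence $\Sigma$-invariant; the condition $\det(T)\in K$ is then preserved by entrywise application of any $\sigma\in\Sigma$, and likewise $\widetilde{K}\trianglelefteq \GL(2,q)_K\rtimes\Sigma$.

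The main step is to identify the kernel $N$ of the natural homomorphism $\GL(2,q)_K\rtimes\Sigma\to \sym(\Omega)$. Clearly $\widetilde{K}\leq N$ (it is the kernel of $\GL(2,q)_K$ on $\Omega$ by the discussion preceding Eq.~\eqref{isopgl}). For the reverse inclusion, suppose $T\sigma\in N$, so $K(Tv^\sigma)=Kv$ for every $0\neq v\in V$. Taking $v=e_1$ and $v=e_2$ (fixed by $\sigma$) forces $T$ to be a diagonal matrix with entries in $K$; taking $v=(1,1)$ then forces the two diagonal entries to be equal, so $T\in\widetilde{K}$. Since $T$ acts trivially on $\Omega$, $\sigma$ itself must act trivially, and by Eq.~\eqref{fieldker} this gives $\sigma=\id$. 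Thus $N=\widetilde{K}$.

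Finally, combining Eq.~\eqref{isopgl} (which gives $\GL(2,q)_K/\widetilde{K}\cong \PGL(2,q)$, using that $n=p$ is odd) with the fact that $\Sigma$ acts faithfully on $\Omega$, we obtain
\[
(\GL(2,q)_K\rtimes\Sigma)^\Omega \;\cong\; (\GL(2,q)_K\rtimes\Sigma)/\widetilde{K}\;\cong\;(\GL(2,q)_K/\widetilde{K})\rtimes \Sigma\;\cong\;\PGL(2,q)\rtimes\Sigma\;=\;\PGaL(2,q),
\]
where the induced action of $\Sigma$ on $\PGL(2,q)$ is the usual one by entrywise Frobenius because it already agrees with it on representatives. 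The expected obstacle is only the careful verification that $N=\widetilde{K}$ (i.e.\ that no nontrivial $\sigma\in\Sigma$ can compensate the action of a matrix in $\GL(2,q)_K\setminus\widetilde{K}$), but this is handled by the basis-vector calculation above.
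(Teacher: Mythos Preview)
Your proposal is correct and follows exactly the route the paper intends: the paper's own proof is a one-line reference to Eqs.~\eqref{fieldker} and~\eqref{isopgl}, Proposition~\ref{autdsrg}, and Remark~2, and you have simply unpacked these references, most notably the identification of the kernel $N=\widetilde{K}$ and the $\Sigma$-equivariance of the isomorphism $\GL(2,q)_K/\widetilde{K}\cong\PGL(2,q)$ (both of which the paper leaves implicit). No different idea is involved.
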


\begin{prop}\label{nonisodsrg}
If $d$ is odd, then the digraphs $\Gamma(1,h)$ and $\Gamma(2,g)$ are nonisomorphic for all $h,g\in C$.
\end{prop}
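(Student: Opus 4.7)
The plan is to reduce, via Proposition~\ref{isodsrg}, to showing that $\Gamma(1,e)\not\cong \Gamma(2,e)$, then suppose for contradiction that an isomorphism $f\colon \Gamma(1,e)\to \Gamma(2,e)$ exists and pin $f$ down using the machinery already developed for Tatra schemes.

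First I would exploit the symmetric/antisymmetric decomposition of the arc sets. Since $s_e^*=s_e$ while $r_{C_i}^*=r_{C_{3-i}}$, the relation ``$(\alpha,\beta)$ and $(\beta,\alpha)$ are both arcs'' in $\Gamma(i,e)$ equals $s_e$, whereas the single-arc relation equals $r_{C_i}$. Any graph isomorphism must preserve both, so $f$ is forced to satisfy $s_e^f=s_e$ and $r_{C_1}^f=r_{C_2}$. In particular $\{r_{C_1}^f,r_{C_2}^f\}=\{r_{C_1},r_{C_2}\}$, so $f$ meets the hypotheses of Proposition~\ref{isonorm} and therefore lies in $\iso(\mathcal{X}_0)$.

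Next, by Proposition~\ref{isotatra}, I may write $f=(T\sigma)^\Omega$ for some $T\in \GL(2,q)$ and $\sigma\in \Sigma$. Lemma~\ref{actiongl} then converts the two constraints on $f$ into constraints on $T$ and $\sigma$: the equation $s_e^f=s_e$ translates to $\det(T)\in K$ (which is harmless), while the decisive equation $r_{C_1}^f=r_{C_2}$ translates to $C_1^\sigma=C_2$. Because $M$ is precisely the index~$2$ subgroup of $\aut(C)$ stabilizing the basic sets $C_1,C_2$ setwise, this would mean $\sigma^C\in \aut(C)\setminus M$.

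The crux of the argument, and the only place where the parity of $d$ is used, is Lemma~\ref{sigmaact}: when $d$ is odd it forces $\Sigma^C\leq M$, so every $\sigma\in \Sigma$ induces an automorphism of $C$ that stabilizes $C_1$. This contradicts the requirement $C_1^\sigma=C_2$, ruling out $f$ and completing the proof. I do not expect a serious obstacle: the main care needed is in the very first step, justifying that $f$ must act as claimed on $s_e$ and on $\{r_{C_1},r_{C_2}\}$; after that everything is a direct application of Propositions~\ref{isonorm} and~\ref{isotatra} together with Lemmas~\ref{actiongl} and~\ref{sigmaact}.
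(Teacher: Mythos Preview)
Your proposal is correct and follows essentially the same route as the paper's proof: reduce to $h=g=e$ via Proposition~\ref{isodsrg}, separate $s_e$ from $r_{C_i}$ by symmetry considerations, then feed the resulting constraints into Propositions~\ref{isonorm} and~\ref{isotatra} and Lemmas~\ref{actiongl} and~\ref{sigmaact} to reach a contradiction. The only cosmetic difference is that the paper derives $s_e^f=s_e$ and $r_{C_1}^f=r_{C_2}$ by first passing to the complement $s_{C^\#}\cup r_{C_{3-i}}$ and then invoking the symmetric/nonsymmetric dichotomy, whereas you argue directly via the symmetric and asymmetric parts of the arc set; both arguments are the same in substance.
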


\begin{proof}
In view of Proposition~\ref{isodsrg}, it is enough to show that $\Gamma_1=\Gamma(1,e)$ and $\Gamma_2=\Gamma(2,e)$ are nonisomorphic. Assume the contrary. Then there is an isomorphism $f$ from $\Gamma_1$ to $\Gamma_2$. Clearly, $f$ maps the arc set of $\Gamma_1$ to the arc set of $\Gamma_2$. So 
$$(s_e\cup r_{C_1})^f=s_e\cup r_{C_2}$$ 
and hence 
$$(s_{C^\#}\cup r_{C_2})^f=(\Omega^2\setminus (1_\Omega \cup s_e\cup r_{C_1}))^f=\Omega^2\setminus (1_\Omega \cup s_e\cup r_{C_2})=s_{C^\#}\cup r_{C_1}.$$
Since $s_e$ and $s_{C^\#}$ are symmetric, whereas $r_{C_1}$ and $r_{C_2}$ are nonsymmetric, the above equalities yield that $s_e^f=s_e$ and 
\begin{equation}\label{rc1c2}
r_{C_1}^f=r_{C_2}~\text{and}~r_{C_2}^f=r_{C_1}.  
\end{equation}
Therefore $f\in \iso(\mathcal{X}_0)$ by Proposition~\ref{isonorm}.

Proposition~\ref{isotatra} implies that $f\in \GaL(2,q)^\Omega$ and hence $f=(T\sigma)^\Omega$ for some $T\in \GL(2,q)$ and $\sigma\in \Sigma$. From Lemma~\ref{actiongl} it follows that $r_g^f=r_{g^\sigma}$ for every $g\in C$. So $r_{C_1}^f=r_{C_1^\sigma}$. Due to Lemma~\ref{sigmaact}, we conclude that $\sigma^C\in M$. Since $C_1\in \orb(M,C)$, we obtain $C_1^\sigma=C_1$ and consequently $r_{C_1}^f=r_{C_1}$. However, $r_{C_1}^f=r_{C_2}$ by Eq.~\eqref{rc1c2}, a contradiction. 
\end{proof}

\begin{lemm}\label{dodd}
If $q-1=p(p-3)/4$, then $d$ is odd.
\end{lemm}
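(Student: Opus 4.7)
The plan is to argue by contradiction. Suppose $d$ is even, write $d=2e$ with $e\geq 1$, and set $s=r^e$, so that $q=s^2$ with $s\geq 2$. Then the hypothesis $q-1=p(p-3)/4$ rewrites as the single Diophantine equation
$$4(s-1)(s+1)=p(p-3).$$
The goal becomes to show this equation has no solution with $s\geq 2$ and $p$ an odd prime.

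Since $p$ is an odd prime dividing the right-hand side, $p$ must divide $(s-1)(s+1)$, and being prime it divides one of the two factors. A brief case check on $p\mid s-1$ versus $p\mid s+1$ shows that in either case $s+1\geq p$, hence $s\geq p-1$. I then plan to feed the resulting lower bound $(s-1)(s+1)\geq (p-2)p$ back into the displayed equation, yielding $4(p-2)p\leq p(p-3)$, which simplifies to an inequality of the shape $3p\leq 5$. Since $p\equiv 3\pmod{4}$ is a prime and $p=3$ is excluded (it would force $q=1$), we have $p\geq 7$, and the inequality is absurd. This contradiction forces $d$ to be odd.

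I do not anticipate any real obstacle: the argument is a short size estimate exploiting the algebraic factorization $q-1=(s-1)(s+1)$ that is available precisely when $q$ is a square, together with the primality of $p$. The only small point to verify carefully is that both divisibility cases yield the same lower bound $s\geq p-1$ (the case $p\mid s-1$ is actually stronger, giving $s\geq p+1$), so the uniform estimate $(s-1)(s+1)\geq (p-2)p$ is safe to use.
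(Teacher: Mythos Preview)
Your proof is correct and follows essentially the same route as the paper's: assume $d$ even, factor $q-1=(r^{d/2}-1)(r^{d/2}+1)$, use primality of $p$ to deduce $r^{d/2}+1\geq p$, and obtain a size contradiction against $p(p-3)/4$. The only cosmetic differences are your introduction of the auxiliary letter $s=r^{d/2}$ and the explicit clearing of the denominator~$4$; the paper leaves the $4$ on the right and compares $p(p-3)/4$ with $p(p-2)$ directly.
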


\begin{proof}
Assume the contrary that $d$ is even. Then $q-1=r^d-1=(r^{\frac{d}{2}}-1)(r^{\frac{d}{2}}+1)=p(p-3)/4$. So $p$ divides $r^{\frac{d}{2}}-1$ or $r^{\frac{d}{2}}+1$. In both cases, $r^{\frac{d}{2}}+1\geq p$. Therefore 
$$p(p-3)/4=(r^{\frac{d}{2}}-1)(r^{\frac{d}{2}}+1)\geq p(p-2),$$
a contradiction.
\end{proof}

Theorem~\ref{main1} immediately follows from Proposition~\ref{dsrg}, Corollary~\ref{autdsrgcorl}, Proposition~\ref{nonisodsrg}, and Lemma~\ref{dodd}.

\section{Divisible design graphs}

As in the previous section, we use the notation from Section~$3$. In this section, we provide a construction of DDGs and show that some of them are pairwise nonisomorphic.

Let $D$ be a difference set in $C$ (possibly, trivial) with parameters $(n,k,\lambda)$. Note that the relation 
$$s_D=\bigcup\limits_{g\in D} s_g$$
is symmetric because each $s_g$ so is. 

\begin{prop}\label{ddg}
The graph $\Delta=\Delta(D)=(\Omega,s_D)$ is a divisible design graph with parameters
\begin{equation}\label{ddgparam}
\left(n(q+1),kq,\lambda q,\frac{k^2(q-1)}{n},q+1,n\right).
\end{equation}
\end{prop}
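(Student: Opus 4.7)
The plan is to apply Lemma~\ref{ddg0} directly to the adjacency matrix $A(s_D) = B_D$, with the partition $\mathcal{L}$ of $\Omega$ playing the role of the divisible-design partition. The relation $s_D$ is symmetric (since each $s_g$ is symmetric by Proposition~\ref{scheme}(5)) and loopless (since $s_g \cap 1_{\Omega} = \varnothing$ for $g\in C$), so $\Delta$ is indeed a graph. Its valency is $\sum_{g\in D} n_{s_g} = kq$ by Proposition~\ref{scheme}(4), $|\Omega| = n(q+1)$ as computed in Section~3, and $\mathcal{L}$ has $q+1$ classes of size $n$ with adjacency matrix $A_C = I_{q+1}\otimes J_n$ by Eq.~\eqref{parab}. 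So the numerical data in the claimed parameter tuple are accounted for, and only the quadratic identity of Lemma~\ref{ddg0} remains to be verified.

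The heart of the argument is the computation of $B_D^2$ via Lemma~\ref{aux}(3). Since $C$ is abelian and $D$ is a difference set in $C$ with parameters $(n,k,\lambda)$, we have
$$\underline{D}^{(-1)}\cdot\underline{D} \;=\; \underline{D}\cdot\underline{D}^{(-1)} \;=\; ke + \lambda\,\underline{C}^{\#} \;=\; (k-\lambda)\,e + \lambda\,\underline{C},$$
and $\varepsilon(\underline{D}^{(-1)}\cdot\underline{D}) = \varepsilon(\underline{D})^2 = k^2$. Combining Lemma~\ref{aux}(3) with Lemma~\ref{aux}(1) and recalling that $m = (q-1)/n$, we obtain
$$B_D^2 \;=\; qA_{\underline{D}^{(-1)}\cdot\underline{D}} + mk^2(J - A_C) \;=\; q(k-\lambda)\,I + q\lambda\,A_C + \frac{k^2(q-1)}{n}(J - A_C).$$
Rewriting $q(k-\lambda)I + q\lambda A_C = kq\,I + q\lambda(A_C - I)$ recasts this as
$$B_D^2 \;=\; (kq)\,I + (q\lambda)(A_C - I) + \frac{k^2(q-1)}{n}(J - A_C),$$
which is exactly the identity required by Lemma~\ref{ddg0} with $\lambda_1 = q\lambda$ and $\lambda_2 = k^2(q-1)/n$.

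There is really no serious obstacle here: once Lemma~\ref{aux} and the DS identity are in place, everything is a formal manipulation in $\mathbb{Z}C$. The one place where care is required is the direction of the product — Lemma~\ref{aux}(3) involves $\xi^{(-1)}\cdot\eta$, whereas the DS identity in the paper is written as $\underline{D}\cdot\underline{D}^{(-1)}$ — but commutativity of $\mathbb{Z}C$ makes this a non-issue. It is also worth checking, as a sanity step, that the parameters satisfy the standard DDG relation $k^2 = k + \lambda_1(n-1) + \lambda_2 n(m-1)$; this follows by a short calculation from $\lambda(n-1) = k(k-1)$ and can be mentioned briefly to confirm the output of the computation is internally consistent.
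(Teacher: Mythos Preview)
Your proof is correct and follows essentially the same approach as the paper: compute $B_D^2$ and match it against the identity in Lemma~\ref{ddg0}, using $A_C=I_{q+1}\otimes J_n$ from Eq.~\eqref{parab}. The only difference is cosmetic: the paper obtains the formula $B_D^2 = kqA_e + \lambda q A_{C^\#} + k^2 m B_C$ by quoting~\cite[p.~229]{Reich} and then rewrites via Eqs.~\eqref{all1sum} and~\eqref{parab}, whereas you derive the same formula directly from Lemma~\ref{aux}(3) and the DS identity in $\mathbb{Z}C$ --- a self-contained route that uses only the tools already in the paper.
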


\begin{proof}
In our notation, the matrices $E_0$, $E_1$, $E_2$, $E_3$ from~\cite[p.~227]{Reich} are equal to $A_e$, $A_{C^\#}=\sum\limits_{g\in C^\#} A_g$, $B_D=\sum\limits_{g\in D} B_g$, and $B_{C\setminus D}=\sum\limits_{g\in C\setminus D} B_g$, respectively. So the third equality from~\cite[p.~229]{Reich} implies that
\begin{equation}\label{sdsquare}
B_D^2=kqA_e+\lambda qA_{C^\#}+k^2mB_C.
\end{equation}
Observe that $B_D$ is the adjacency matrix of $\Delta$. Together with Lemma~\ref{ddg0}, the equality $m=\frac{q-1}{n}$, and Eqs.~\eqref{all1sum} and~\eqref{parab}, Eq.~\eqref{sdsquare} yields that $\Delta$ is a DDG with the required parameters.
\end{proof}

For the information on DSs in cyclic groups, we refer the readers to~\cite[Appendix,~Section~3]{BJL}. One can find among these DSs the ones of the following types: $(1)$ trivial; $(2)$ Singer; $(3)$ Paley of prime order; $(4)$ twin prime; $(5)$ biquadratic or octic residue. All of these DSs lead to DDGs with different parameters.

The statement below immediately follows from Proposition~\ref{ddg}.

\begin{corl}\label{srg}
The graph $\Delta(D)$ is a strongly regular graph if and only if $\lambda q=\frac{k^2(q-1)}{n}$. If the latter is the case, then $\Delta(D)$ has parameters
$$\left(n(q+1),kq,\lambda q,\lambda q\right).$$
\end{corl}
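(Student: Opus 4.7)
The plan is to derive this corollary directly from Proposition~\ref{ddg} together with the observation, already recalled in the introduction, that a DDG is a strongly regular graph (in fact a $(v,k,\lambda)$-graph) precisely when the two "within-class" and "between-class" intersection numbers coincide, i.e.\ when $\lambda_1=\lambda_2$.

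First, I would recall from Proposition~\ref{ddg} that $\Delta(D)$ is a divisible design graph with the parameters listed in Eq.~\eqref{ddgparam}; in particular $\lambda_1=\lambda q$ and $\lambda_2=k^2(q-1)/n$, while $m=q+1$ and $n=n$ are both greater than~$1$ in all non-degenerate cases. Then I would invoke the characterization stated right after the definition of DDG in the introduction: a DDG is a $(v,k,\lambda)$-graph, and hence a strongly regular graph with $\lambda=\mu$, exactly when $\lambda_1=\lambda_2$ (the alternative characterizations $m=1$ or $n=1$ do not occur here). Equivalently, one can read this straight off Lemma~\ref{ddg0}: if $\lambda_1=\lambda_2=\ell$, then the identity
\[
A^2=kI_v+\ell(I_m\otimes J_n-I_v)+\ell(J_v-I_m\otimes J_n)=kI_v+\ell(J_v-I_v)
\]
is exactly the SRG identity from Lemma~\ref{dsrg0} with $t=k$ and $\lambda=\mu=\ell$.

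Applying this with $\ell=\lambda q=k^2(q-1)/n$ immediately yields that $\Delta(D)$ is a strongly regular graph if and only if $\lambda q=k^2(q-1)/n$, and in that case the SRG parameters are obtained by simply reading off $v=n(q+1)$, $k=kq$, and $\lambda=\mu=\lambda q$ from Eq.~\eqref{ddgparam}. There is no real obstacle here — the entire content is the reduction $\lambda_1=\lambda_2$, and the arithmetic substitution is routine.
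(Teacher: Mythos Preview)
Your proposal is correct and follows essentially the same approach as the paper, which simply states that the corollary ``immediately follows from Proposition~\ref{ddg}.'' You have merely made explicit the one-line observation that a DDG with $\lambda_1=\lambda_2$ is a $(v,k,\lambda)$-graph, which is already recalled in the introduction.
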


We can find only two families of DSs satisfying the condition $\lambda q=\frac{k^2(q-1)}{n}$. The first of them is the following. If $d$ is even, $n=r^{d/2}+1=\sqrt{q}+1$, and $D$ is a trivial DSs with parameters~$(n,n-1,n-2)$, then 
$$\lambda q=q(\sqrt{q}-1)=\frac{k^2(q-1)}{n}.$$
So the graph $\Delta(D)$ is an SRG with parameters
$$\left((q+1)(\sqrt{q}+1),q\sqrt{q},q(\sqrt{q}-1)\right).$$
The second one family is described further. Let $n=(r^d-1)/(r-1)$ and $D$ a DS with parameters 
$$\left(\frac{r^d-1}{r-1},r^{d-1},r^{d-2}(r-1)\right)$$
which are parameters of the complement to a Singer DS. Then
$$\lambda q=r^{2d-2}(r-1)=\frac{k^2(q-1)}{n}$$
and hence $\Delta(D)$ is an SRG with parameters
$$\left(\frac{r^{2d}-1}{r-1},r^{2d-1},r^{2d-2}(r-1),r^{2d-2}(r-1)\right).$$
In both cases, we obtain SRGs with parameters of the complement to the symplectic graph (see~\cite{BM,Kab}).

Since the edge set of $\Delta(D)$ is a union of some basic relations of $\mathcal{X}_0$, we have 
$$\aut(\Delta(D))\geq \aut(\mathcal{X}_0)=(\GL(2,q)_K\rtimes \Sigma_0)^\Omega.$$
Unfortunately, we can not say much on the number of pairwise nonisomorphic DDGs with the same parameters from Proposition~\ref{ddg}. One of the reasons for this is that $\aut(\Delta(D))$ can be much large than $\aut(\mathcal{X}_0)$. Nevertheless, we are able to prove the following statement concerned with the case when $n$ is prime.

\begin{prop}\label{isoddg}
Let $n$ be a prime. Suppose that difference sets $D_1$ and $D_2$ in $C$ are inequivalent and at least one of the divisible design graphs $\Delta(D_1)$ and $\Delta(D_2)$ is proper. Then $\Delta(D_1)$ and $\Delta(D_2)$ are nonisomorphic.
\end{prop}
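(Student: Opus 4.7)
The plan is to assume $f\colon \Delta(D_1)\to\Delta(D_2)$ is an isomorphism and derive a contradiction in three stages: promote $f$ to a permutation of the classes of $\mathcal{L}$, decode each difference set $D_i$ from a bipartite slab of $\Delta(D_i)$, and appeal to Lemma~\ref{palfy}.

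First I would pin down parameters and the canonical partition. Since $f$ preserves the regularity and common-neighbour counts, the two DDGs share the same parameters~\eqref{ddgparam}; in particular $|D_1|=|D_2|=k$, and by the DS relation $k(k-1)=\lambda(n-1)$ they share the same $\lambda$, so the hypothesis that at least one of $\Delta(D_1),\Delta(D_2)$ is proper forces both to be proper. For a proper DDG the inequality $\lambda q\neq k^{2}(q-1)/n$ makes the relation ``two distinct vertices share exactly $\lambda q$ common neighbours'' coincide with ``two distinct vertices lie in the same class of $\mathcal{L}$''. Since $f$ respects the former, $\mathcal{L}^{f}=\mathcal{L}$.

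Next I would identify each bipartite slab of $\Delta(D_i)$ between two distinct classes with the incidence graph of $\dev(D_i)$. Fix distinct $\Lambda_{0},\Lambda\in\mathcal{L}$, pick $\alpha_{0}\in\Lambda_{0}$, and by Lemma~\ref{neigh} select $\alpha_{1}\in\Lambda$ with $\langle\alpha_{0},\alpha_{1}\rangle=e$. Writing $\Lambda_{0}=\{h\alpha_{0}:h\in C\}$ and $\Lambda=\{g\alpha_{1}:g\in C\}$, the scaling identity $\langle h\alpha_{0},g\alpha_{1}\rangle=hg\langle\alpha_{0},\alpha_{1}\rangle=hg$ shows that the edge set of the slab on $\Lambda_{0}\cup\Lambda$ in $\Delta(D_i)$ is exactly $\{(h\alpha_{0},g\alpha_{1}):hg\in D_i\}$. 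After the substitution $c=g^{-1}$ on the $\Lambda$-side this is precisely the incidence graph of the symmetric design $\dev(D_i)=(C,\{D_i c:c\in C\})$, with $h$ indexing points and $c$ indexing blocks. Because $\mathcal{L}^{f}=\mathcal{L}$, the restriction of $f$ sends the slab on $\Lambda_{0}\cup\Lambda$ to the slab on $f(\Lambda_{0})\cup f(\Lambda)$ preserving the bipartition, hence yields an isomorphism of incidence structures $\dev(D_{1})\cong\dev(D_{2})$. Since $n$ is prime, Lemma~\ref{palfy} forces $D_{1}$ and $D_{2}$ to be equivalent, contradicting the hypothesis.

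The delicate point, and really the only place where propriety is used, is the canonical-partition step that yields $\mathcal{L}^{f}=\mathcal{L}$; without propriety, $f$ may legitimately mix classes of $\mathcal{L}$ and the slab-by-slab identification with $\dev(D_i)$ collapses. Once this is in place, the bilinear computation realizing the slab as $\dev(D_i)$ and the reduction to equivalence of $D_{1},D_{2}$ via Lemma~\ref{palfy} are essentially bookkeeping.
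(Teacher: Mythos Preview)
Your proposal is correct and follows essentially the same route as the paper: use propriety to show that $f$ preserves $\mathcal{L}$, read off an isomorphism $\dev(D_1)\cong\dev(D_2)$ from a bipartite piece between two classes, and invoke Lemma~\ref{palfy}. The paper packages the design isomorphism via the coordinate permutations $f_{\Lambda,\alpha}$ of Lemma~\ref{isoscheme}, whereas you identify each slab directly with the incidence graph of $\dev(D_i)$, but the substance is identical.
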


\begin{proof}
If one of the DDGs $\Delta_1=\Delta(D_1)$, $\Delta_2=\Delta(D_2)$ is improper or $D_1$ and $D_2$ have distinct parameters, then it is clear that $\Delta_1$ and $\Delta_2$ are nonisomorphic. So we may assume that $\Delta_1$ and $\Delta_2$ are proper and $D_1$ and $D_2$ have the same parameters, say, $(n,k,\lambda)$. Suppose that $\Delta_1$ and $\Delta_2$ are isomorphic and $f$ is an isomorphism from $\Delta_1$ to $\Delta_2$.

To prove the lemma, is is enough to show that $D_1$ and $D_2$ are equivalent. Due to Eq.~\eqref{sdsquare}, the relation $r_{C^\#}$ consists of all pairs of vertices of $\Delta_i$, $i\in\{1,2\}$, that have $\lambda q$ common neighbors. Since each $\Delta_i$ is a proper DDG, we conclude that $r_{C^\#}^f=r_{C^\#}$ and hence $r_C^f=r_C$. Let $\Lambda_1,\Lambda_2\in \mathcal{L}$ with $\Lambda_1\neq\Lambda_2$ and $\alpha_i\in \Lambda_i$, $i\in\{1,2\}$. Lemma~\ref{isoscheme} implies that $\Lambda_i^f\in \mathcal{L}$ and the mapping $f_{\Lambda_i,\alpha_i}$ defined in Statement~$(2)$ of that lemma is a permutation on~$C$, $i\in \{1,2\}$. 

As $f$ is an isomorphism from $\Delta_1$ to $\Delta_2$, we have $s_{D_1}^f=s_{D_2}$. So
$$g_1g_2\langle \alpha_1,\alpha_2\rangle=\langle g_1\alpha_1,g_2\alpha_2\rangle \in D_1\Leftrightarrow g_1^{f_{\alpha_1,\Lambda_1}}g_2^{f_{\alpha_2,\Lambda_2}}\langle \alpha_1^f,\alpha_2^f\rangle=\langle (g_1\alpha_1)^f,(g_2\alpha_2)^f\rangle\in D_2$$
for all $g_1,g_2\in C$. The latter equality yields that
$$g_2\in D_1\langle \alpha_1,\alpha_2\rangle^{-1}g_1^{-1}\Leftrightarrow g_2^{f_{\alpha_2,\Lambda_2}}\in D_2\langle \alpha_1^f,\alpha_2^f\rangle^{-1}(g_1^{f_{\alpha_1,\Lambda_1}})^{-1}.$$
Therefore the bijections $\varphi_1:C\rightarrow C$ and $\varphi_2:\mathcal{B}_1\rightarrow \mathcal{B}_2$, where $\mathcal{B}_1=\{D_1g:~g\in C\}$ and $\mathcal{B}_2=\{D_2g:~g\in C\}$, such that 
$$g^{\varphi_1}=g^{f_{\alpha_2,\Lambda_2}}$$
and
$$(D_1g)^{\varphi_2}=D_2\langle \alpha_1^f,\alpha_2^f\rangle^{-1}((\langle \alpha_1,\alpha_2\rangle g^{-1})^{f_{\alpha_1,\Lambda_1}})^{-1}$$
satisfy the condition
$$g\in D_1h\Leftrightarrow g^{\varphi_1}\in (D_1h)^{\varphi_2}.$$
Thus, the designs $\dev(D_1)=(C,\mathcal{B}_1)$ and $\dev(D_2)=(C,\mathcal{B}_2)$ are isomorphic. Since $|C|=n$ is prime, $D_1$ and $D_2$ are equivalent by Lemma~\ref{palfy}.
\end{proof}

Several inequivalent DSs with Singer parameters in a cyclic group of prime order are known (see, e.g.,~\cite{Gordon} and~\cite[Section~VI.17]{BJL}). If $n=2^i-1$ is a Mersenne prime, then there exist at least $\varphi(i)/2=(i-1)/2$ pairwise inequivalent DSs in $C$ with Singer parameters~$(2^i-1,2^{i-1}-1,2^{i-2}-1)$ by~\cite{DD}. 

We would like to finish the paper with the following question.

\begin{ques}
How many pairwise nonisomorphic divisible design graphs (strongly regular graphs, resp.) with the same parameters come from Proposition~\ref{ddg} (Corollary~\ref{srg}, resp.)?
\end{ques}

\begin{rem3}
S. Goryanov has noted in private communication that the divisible design graphs (strongly regular graphs, resp.) from Proposition~\ref{ddg} (Corollary~\ref{srg}, resp.) were independently developed in a different context in joint work with B. De Bruyn and W. Yan, and that a paper describing these results will appear.
\end{rem3}

\vspace{5mm}

\noindent \textbf{Acknowledgements:} The second author was supported by the state contract of the Sobolev Institute of Mathematics (project number FWNF-2026-0011).

\end{document}